\documentclass{amsart}
\usepackage[english]{babel}
\usepackage{graphicx}
\usepackage{amsmath}
\usepackage{amssymb}
\usepackage{xcolor}
\usepackage{hyperref}
\usepackage{enumerate}
\vfuzz2pt 
\hfuzz2pt 
\newtheorem{thm}{Theorem}[section]
\newtheorem{defn}[thm]{Definition}
\newtheorem{cor}[thm]{Corollary}
\newtheorem{lem}[thm]{Lemma}
\newtheorem{prop}[thm]{Proposition}
\theoremstyle{definition}
\theoremstyle{remark}
\newtheorem{rem}[thm]{Remark}

\numberwithin{equation}{section}

\newcommand{\D}{\mathbb D}
\newcommand{\C}{\mathbb C}
\newcommand{\N}{\mathbb N}
\newcommand{\T}{\mathbb T}

\renewcommand{\H}{\mathcal H}

\newcommand{\ba}{\begin{eqnarray*}}
\newcommand{\ea}{\end{eqnarray*}}
\newcommand{\beq}{\begin{equation}}
\newcommand{\eeq}{\end{equation}}

\begin{document}

\title{Ces\`aro-type operators on mixed norm spaces}%
\author{Oscar Blasco}%
\address{Departamento de An\'alisis Matem\'atico\\Universitat de Val\`encia\\Burjassot 46100, Valencia (Spain)}%
\email{oscar.blasco@uv.es}%

\author{Alejandro Mas}
\address{Departamento de Matemáticas,
Universidad de Alicante, San Vicente del Raspeig 03690, Alicante (Spain)}
\email{a.mas@ua.es}

\thanks{The first author is partially supported by the Spanish Project PID2022-138342NB-I00. The second author is partially supported by Ministerio de Ciencia e Innovaci\'on, Spain, project PID2022-136619NB-I00 }%
\subjclass{47B38, 30H20}%
\keywords{Ces\`aro operator, weighted Bergman spaces, mixed norm spaces, Carleson measures}%


\def\N{\mathbb{N}}
\def\Z{\mathbb{Z}}
\def\K{\mathbb{K}}
\def\R{\mathbb{R}}
\def\D{\mathbb{D}}
\def\C{\mathbb{C}}
\def\T{\mathbb{T}}

\def\B{\mathcal B}

\begin{abstract}

 Given a positive Borel measure $\mu$ on $[0,1)$ and a parameter $\beta>0$, we consider the Ces\`aro-type operator $\mathcal C_{\mu,\beta}$ acting on the analytic function $f(z)=\sum_{n=0}^\infty a_n z^n$ on the unit disc of the complex plane $\mathbb D$, defined by
\[
\mathcal C_{\mu,\beta}(f)(z)= \sum_{n=0}^\infty \mu_n \left( \sum_{k=0}^n \frac{\Gamma(n-k+\beta)}{(n-k)! \Gamma(\beta)} a_k \right) z^n = \int_0^1 \frac{f(tz)}{(1-tz)^\beta} d\mu(t),
\]
where $\mu_n=\int_0^1 t^n d\mu(t)$. This operator generalizes the classical Ces\`aro operator (corresponding to the case where $\mu$ is the Lebesgue measure and $\beta=1$) and includes other relevant cases previously studied in the literature. In this paper we study the boundedness of $\mathcal C_{\mu,\beta}$ on mixed norm spaces $H(p,q,\gamma)$ for $0<p,q\leq\infty$ and $\gamma>0$. Our results extend and unify several known characterizations for the boundedness of Ces\`aro-type operators acting on spaces of analytic functions.

\end{abstract}

\maketitle

\section{Introduction}

In this paper we consider the  Ces\`aro-type operator defined by means of a positive Borel measure $\mu$ defined on $[0,1)$ and a parameter $\beta>0$ acting on spaces of analytic functions on the unit disc as follows: Given $f\in \mathcal H(\D)$, say  $f(z)=\sum_{n=0}^\infty a_nz^n$, we write 
\beq \label{cesaromubeta}\mathcal C_{\mu,\beta}(f)(z)= \sum_{n=0}^\infty \mu_n (\sum_{k=0}^n \frac{\Gamma(n-k+\beta)}{(n-k)!\Gamma(\beta)} a_k)z^n=\int_0^1 \frac{f(tz)}{(1-tz)^\beta}d\mu(t)\eeq
where $\mu_n=\int_0^1 t^n d\mu(t)$. Clearly  $\mathcal C_{\mu,\beta}(f)\in \mathcal H(\D)$ for any $f\in \mathcal H(\D)$. We
 will analyze the boundedness of $\mathcal C_{\mu,\beta}$ acting on different mixed norm spaces $H(p,q,\gamma)$ for $0<p,q\le \infty$ and $\gamma>0$, where $H(p,q,\gamma)$ consists of those analytic functions on $\D$ satisfying the condition
$$\|f\|_{(p,q,\gamma)}=(\int_0^1 (1-r)^{\gamma q-1} M^q_p(f,r) dr)^{1/q}<\infty$$
where as usual $M_p(f,r)=(\int_0^{2\pi}|f(re^{i\theta})|^p\frac{d\theta}{2\pi})^{1/p}=\|f_r\|_{H^p}$ with $f_r(z)=f(rz)$.

The case $d\mu(t)=dt$ and $\beta=1$ corresponds to the classical
Ces\`{a}ro operator, denoted by $\mathcal C$, that is 
\beq \label{cesaro}
\mathcal C(f)(z)=\sum_{n =0}^\infty \frac{1}{n+1}(\sum_{k=0}^n a_k) z^n=\int_0^1 \frac{f(tz)}{1-tz}dt\eeq
for any $f\in \H(\D)$.

The boundedness of the Ces\`aro operator $\mathcal C$ on Hardy spaces $H^p$ for $0 < p < \infty$, weighted Bergman spaces $A^p_\alpha$ for $0<p<\infty$ and $\alpha>-1$ and mixed norm spaces $H(p,q,\gamma)$ for $0<p,q<\infty$ and $\gamma>0$ has been established by various authors using different approaches (see for instance \cite{A, H, M, S1,S2,S3, St}).

 Andersen in \cite{A} considered the case $d\mu_\beta(t)= \beta(1-t)^{\beta-1}dt$ and $\beta>0$, denoting the associated operator by
$$\mathcal C^{\beta-1}(f)(z)= \beta\int_0^1\frac{f(tz)(1-t)^{\beta-1}}{(1-tz)^{\beta}}dt=\sum_{n=0}^\infty \frac{1}{A_n^{\beta}} (\sum_{k=0}^n A_{n-k}^{\beta-1} a_k)z^n$$ where  $f(z)=\sum_{n=0}^\infty a_nz^n$ and
$A_n^\alpha=\frac{\Gamma(n+1+\alpha)}{n!\Gamma(1+\alpha)}$ for $\alpha>-1$.

He showed the boundedness of $\mathcal C^{\beta-1}$ on $H(p,q,\gamma)$ for $0<p,q<\infty$ and $\gamma>0$.

More recently Galanopoulos, Girela and Merch\'{a}n in  \cite{GGM} dealt with the case $\beta=1$  and denoted \beq \label{cesaromu} \mathcal C_{\mu}(f)(z)= \sum_{n=0}^\infty \mu_n (\sum_{k=0}^n  a_k)z^n=\int_0^1 \frac{f(tz)}{1-tz}d\mu(t),\eeq where $\mu_n=\int_0^1 t^n d\mu(t)$ and $f(z)=\sum_{n=0}^\infty a_nz^n$.

In \cite{GGM}, the authors showed that  the boundedness of $\mathcal C_\mu$ on Hardy spaces and weighted Bergman spaces holds only for Carleson measures $\mu$, where in this situation, means the existence of  a given constant $C>0$ such that $\mu([r,1))\le C(1-r)$ for $0<r<1$ or equivalently the condition $\mu_n=O(\frac{1}{n+1})$.

Since the introduction of the operator $\mathcal C_\mu$,  numerous authors have investigated its boundedness on many other spaces of analytic functions; see, for instance, \cite{BSW,GGMM,GGM, JT}. Also, the operator $\mathcal C_\mu$ was first extended  by Blasco in \cite{B24},  by considering complex Borel measures on $[0,1)$ instead of nonnegative ones, and later by Galanopoulos, Girela, and Merchan in \cite{GGM2}, by considering complex Borel measures $\eta$ defined on the unit disc $\D$ and  $\mathcal C_{\eta}f(z)=\sum_{n=0}^\infty \eta_n(\sum_{k=0}^n a_k)z^n$ with $\eta_n=\int_{\mathbb D} w^nd\eta(w)$. The reader is referred to
\cite{BGSW,B24bis, GGM2, LX} for results in this more general setting for different spaces of analytic functions. There is still a more general formulation $\mathcal C_{(\lambda_n)}f(z)=\sum_{n=0}^\infty \lambda_n(\sum_{k=0}^n a_k)z^n$ where $(\lambda_n)$ is a sequence of complex numbers. This formulation does not have an integral representation and is also known as a Rhaly matrix operator (\cite{R1, R2}). These more general operators when acting on certain spaces of analytic functions have  recently been considered, for instance, in \cite{BGSW, LX}.

Conditions on $\mu$ and $\beta$ for the boundedness of the operator $\mathcal C_{\mu,\beta}$ for $\beta>0$ 
 have been studied by several authors, for example in \cite{JT} between different Dirichlet-type spaces, in \cite{GSZ}  for different weighted Bergman spaces and in \cite{GTZ} from the Bloch space $\mathcal{B}$ into the Bergman space $A^p$. 
 In most of the cases its boundedness is related to the fact that $\mu$ is an $s$-Carleson measure for certain value $s$, meaning $\mu([r,1))\le C (1-r)^s$ for all $0<r<1$ and some $C>0$.

In this paper we shall recover many previous results using a different approach. 
In \cite[Theorem 6]{GGM}  it was shown that $\mathcal C_\mu=\mathcal C_{\mu,1}$ maps $A^p_\alpha$ into itself for $1<p<\infty$ if and only if $\mu$ is a $1$-Carleson measure. We shall see in Theorem \ref{main2}  that such a result extends not only to the cases $p=1$ and $p=\infty$ but also to any mixed norm space $H(p,q,\gamma)$.

Recently in \cite[Theorem 2]{GSZ}  it has been shown, making use of a generalized Schur’s test, that  $\mathcal C_{\mu,\beta}$ maps $A^p_\alpha$ into $A^q_\alpha$ for $1\le p\le q<\infty$ if and only if $\mu$ is an $s$-Carleson measure where
$s=\beta+ (2+\alpha)(\frac{1}{p}-\frac{1}{q}).$ We shall recover such a result from our results using embeddings between mixed norm spaces. 

Our technique will be to look at the function $\mathcal C_\mu(1)(z)=F_\mu(z)=\int_0^1 \frac{d\mu(t)}{1-tz}$ and to consider 
the operator $\mathcal C_{\mu,\beta}$  as a composition of two operators: either a  Hadamard multiplier with symbol $F_\mu$ and the multiplication operator with symbol $K_{\beta-1}$ or a  Hadamard multiplier with symbol the $\beta$-fractional derivative of the function $F_\mu$ and the weighted Ces\`aro operator $\mathcal C^{\beta-1} $, namely
$$ \mathcal C_{\mu,\beta}f= F_\mu* fK_{\beta-1}= D_{\beta} F_\mu* \mathcal C^{\beta-1}f$$
where, for each $\alpha>-1$, we denote  $K_{\alpha} (z)=\frac{1}{(1-z)^{\alpha+1}}= \sum_{n=0}^\infty \frac{\Gamma(n+\alpha+1)}{n!\Gamma(\alpha+1)}  z^n$,  the Hadamard product of two functions $f, g\in \mathcal H(\D)$ is given by
$$  f*g(z)=\sum_{n=0}^\infty a_nb_n z^n$$
for $f(z)=\sum_{n=0}^\infty a_nz^n$ and $g(z)=\sum_{n=0}^\infty b_nz^n$
and the fractional derivative is defined by 
$$D_{\alpha} f(z)= f*K_{\alpha} (z)= \sum_{n=0}^\infty \frac{\Gamma(n+\alpha+1)}{n!\Gamma(\alpha+1)} a_n z^n.$$

We shall analyze the boundedness of $\mathcal C_{\mu,\beta}:H(p_1,q_1,\gamma_1)\to
H(p_2,q_2,\gamma_2)$ for different values of the parameters $p,q$ and $\gamma$. This will allow, among other things, to cover the study of their boundedness from  $A^p_{\alpha_1}$  to $A^p_{\alpha_2}$ for $\alpha_1\ne \alpha_2$ or from $A^p_\alpha$ to $A^q_\alpha$ for $p<q$ that had been previously considered.

Moreover, we shall see that the Carleson-type conditions on $\mu$ that have appeared previously in many papers are reformulations of the fact that certain fractional derivative of $F_\mu$ belongs to a mixed norm space $H(p,\infty, \gamma)$ (see Theorem \ref{carlesonfmu}).

The paper is divided into seven sections. Sections 2 and 3 are of a preliminary nature. In them, we introduce mixed norm spaces $H(p,q,\gamma)$ and  fractional derivatives $D_\alpha$, respectively and present some properties to be used later on. In Section 4 we introduce the fundamental function $F_\mu$ and show how to describe $D_\alpha F_\mu\in H(p,q,\gamma)$ in terms of the moments $(\mu_n)$. Section 5 contains some preliminaries on  Carleson measures and their connection with $D_\alpha F_\mu$ (see Theorem \ref{carlesonfmu}).
The main results are in Section 6, where we analyze the boundedness of $\mathcal C_{\mu,\beta}$ acting between different spaces $H(p,q,\gamma)$.
In our main result, we shall notice that the $s$-Carleson condition on a measure is actually equivalent to the fact that $\mathcal C_{\mu,\beta}$ maps $H(p,q,\gamma_1)$ to $H(p,q,\gamma_2)$ for $s=\beta+\gamma_1-\gamma_2$ (see Theorem \ref{main2}). As a consequence in Corollary \ref{corCmubetaBS} we recover \cite[Theorem 2]{GSZ}.  

Finally, in Section 7, we manage to get some additional conditions on the measure for the operator $\mathcal C_{\mu,\beta}$ to map $H(p,q_1,\gamma)$ into $H(p,q_2,\gamma)$ for $q_2<q_1$. These extra conditions are completely described for $q_1=\infty$ (see Theorem \ref{teofinal}) in terms of the fact that certain fractional derivative of $F_\mu$ belongs to the range space $H(p,q_2,\gamma)$.  In particular, in Corollary \ref{corofinal2}, we show that in the case $p\ge 2$ the boundedness of $\mathcal C_\mu$ from $H(p,\infty,\gamma)$ into $H(p,q,\gamma)$ is actually equivalent to $(\mu_n (n+1)^{1-1/q})\in \ell^q$.

Throughout the paper the letter $C=C(\cdot)$ will denote an absolute constant whose value depends on the parameters indicated
in the parenthesis, and may change from one occurrence to another.
We will use the notation $a\lesssim b$ if there exists a constant
$C=C(\cdot)>0$ such that $a\le Cb$, and $a\gtrsim b$ is understood
in an analogous manner. In particular, we write $a\approx b$ and say that $a$ and $b$ are comparable if $a\lesssim b$ and
$a\gtrsim b$.

\section{Preliminaries on mixed norm spaces}

For $0<q<\infty$, $0<p\leq \infty$ and $\gamma>0$ we denote $H(p,q,\gamma)$ the mixed norm space of analytic functions in the unit disc satisfying the condition
$$\|f\|_{(p,q,\gamma)}=(\int_0^1 (1-r)^{\gamma q-1} M^q_p(f,r) dr)^{1/q}<\infty.$$
Similarly we denote $H(p,\infty, \gamma)$ the space of analytic functions in the unit disc such that $$ \|f\|_{(p,\infty,\gamma)}= \sup_{0<r<1}  (1-r)^\gamma M_p(f,r)<\infty.$$
Since $M_p(f,r)$ is increasing in $r$ we sometimes will use the fact that
\beq \label{equiv}
\|f\|_{(p,q,\gamma)}\approx (\int_0^1 (1-r)^{\gamma q-1} M^q_p(f,r^2) dr)^{1/q}.
\eeq

With this scale of spaces, we recover some classical ones, for instance  Korenblum spaces $A^\infty_\gamma$, consisting of analytic functions satisfying $|f(z)|=O(\frac{1}{(1-|z|)^\gamma})$, corresponding to $H(\infty,\infty,\gamma)$ or the weighted Bergman spaces $A^p_\alpha$ for $\alpha>-1$, which are spaces of analytic functions for which $ \int_\D |f(z)|^p(1-|z|^2)^{\alpha}dA(z)<\infty$ where $dA(z)$ denotes the normalized Lebesgue measure on the unit disc,  corresponding to $H(p,p, \frac{1+\alpha}{p})$.

The following inclusions are well known and easy to show: \begin{equation} \label{inclugamma}H(p,q,\gamma_1)\subset H(p,q,\gamma_2), \quad \gamma_1\le \gamma_2, \end{equation}
\beq \label{inclup} H(p_2,q,\gamma)\subset H(p_1,q,\gamma), \quad p_1\le p_2,\eeq
\beq \label{incluq}H(p,q_1,\gamma)\subset H(p,q_2,\gamma), \quad q_1\le q_2.\eeq
Let us recall that Hardy-Littlewood theorem (\cite[Theorem 5.11]{D}) gives that
\beq \label{hl} H^p\subset H(q,p,1/p-1/q), \quad p<q .\eeq
From (\ref{hl}) one easily gets
\begin{equation} \label{inclu}H(p_2,q,\gamma)\subset H(p_1,q,\gamma)\subset  H(p_2,q,\gamma +1/p_1-1/p_2) , \quad p_1<p_2.\end{equation}
In particular we shall use later on the following inclusion
\begin{equation} \label{incluBergman}A^p_\alpha\subset H(q,p,(\alpha+2)/p-1/q), \quad  p\le q. \eeq

Let us mention two simple facts to be used in the sequel.
\begin{lem} \label{lemaprod} Let $f\in H(p_1,q_1,\alpha_1)$ and $g\in H(p_2,q_2,\alpha_2)$. Then $$fg\in H(p_3,q_3,\alpha_3)$$ where
$\alpha_3=\alpha_1+\alpha_2$, $\frac{1}{p_3}=\frac{1}{p_1}+\frac{1}{p_2}$ and $\frac{1}{q_3}=\frac{1}{q_1}+\frac{1}{q_2}$.
\end{lem}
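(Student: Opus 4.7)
The plan is to reduce the claim to two applications of Hölder's inequality, one in the angular variable and one in the radial variable.

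First, for fixed $r\in(0,1)$, applying Hölder on the circle with exponents $p_1/p_3$ and $p_2/p_3$ (which are conjugate because $1/p_3=1/p_1+1/p_2$) gives the pointwise-in-$r$ bound
$$M_{p_3}(fg,r)\le M_{p_1}(f,r)\,M_{p_2}(g,r).$$
When one of $p_1,p_2$ equals $\infty$ the same estimate holds using $\|f_r\|_{H^\infty}=M_\infty(f,r)$.

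Next, I would split the radial weight. Since $1/q_3=1/q_1+1/q_2$ we have $q_3/q_1+q_3/q_2=1$, so writing $\alpha_3 q_3-1$ as $(\alpha_1 q_1-1)(q_3/q_1)+(\alpha_2 q_2-1)(q_3/q_2)$ (a check of exponents confirms this) we obtain
$$(1-r)^{\alpha_3q_3-1}M_{p_3}(fg,r)^{q_3}\le \bigl[(1-r)^{\alpha_1q_1-1}M_{p_1}(f,r)^{q_1}\bigr]^{q_3/q_1}\bigl[(1-r)^{\alpha_2q_2-1}M_{p_2}(g,r)^{q_2}\bigr]^{q_3/q_2}.$$
Integrating over $(0,1)$ and applying Hölder once more, with conjugate exponents $q_1/q_3$ and $q_2/q_3$, yields
$$\|fg\|_{(p_3,q_3,\alpha_3)}\le \|f\|_{(p_1,q_1,\alpha_1)}\|g\|_{(p_2,q_2,\alpha_2)},$$
which is the desired conclusion.

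For the cases $q_1=\infty$ or $q_2=\infty$ (so $q_3$ equals the finite one or $\infty$), I would instead absorb the supremum factor directly: for example, if $q_1=\infty$ then $M_{p_1}(f,r)\le (1-r)^{-\alpha_1}\|f\|_{(p_1,\infty,\alpha_1)}$, and substituting into the pointwise bound gives $(1-r)^{\alpha_3 q_2-1}M_{p_3}(fg,r)^{q_2}\le \|f\|_{(p_1,\infty,\alpha_1)}^{q_2}(1-r)^{\alpha_2 q_2-1}M_{p_2}(g,r)^{q_2}$, which integrates to the required bound. The case $q_1=q_2=\infty$ is immediate from the pointwise $M_{p_3}$ bound and the definition of the $(p,\infty,\gamma)$ norm.

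There is no real obstacle here; the only thing to watch is the bookkeeping of exponents, especially verifying that the conjugacy relations $q_3/q_1+q_3/q_2=1$ and $p_3/p_1+p_3/p_2=1$ make the two Hölder applications legal and that the weight exponents reassemble correctly to $\alpha_3 q_3-1$.
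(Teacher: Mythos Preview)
Your proof is correct and follows the same two-step Hölder argument as the paper. The only cosmetic difference is that the paper writes $f\in H(p_1,q_1,\alpha_1)$ as $(1-r)^{\alpha_1}M_{p_1}(f,r)\in L^{q_1}(\frac{dr}{1-r})$, so the radial Hölder inequality is applied with respect to the common measure $\frac{dr}{1-r}$, which sidesteps your explicit verification that the weight exponents reassemble to $\alpha_3 q_3-1$.
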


\begin{proof}
Recall that $f\in H(p_1,q_1,\alpha_1)$ means $(1-r)^{\alpha_1}M_{p_1}(f,r)\in L^{q_1}(\frac{dr}{1-r})$.
Hence H\"older's inequality in the parameter $p$ gives
$$ (1-r)^{\alpha_3}M_{p_3}(fg,r)\le (1-r)^{\alpha_1}M_{p_1}(f,r) (1-r)^{\alpha_2}M_{p_2}(g,r) $$
and then H\"older's inequality in the parameter $q$ gives the desired result.
\end{proof}

\begin{lem} \label{lemacon} Let $1\le p_1,p_2\le \infty$,  $1/p_1+1/p_2\ge 1$ and let $1/p_3=1/p_1+1/p_2-1$, $1/q_3=1/q_1+1/q_2$ and $\gamma_3=\gamma_1+\gamma_2. $

If
$f\in H(p_1,q_1,\gamma_1)$ and $g\in H(p_2,q_2,\gamma_2)$ then $f*g\in H(p_3,q_3,\gamma_3)$.
\end{lem}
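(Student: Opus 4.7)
The plan is to exploit the classical identification of the Hadamard product with convolution on the circle, combined with Young's inequality in the angular variable and Hölder's inequality in the radial variable.

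First I would write, for $0<r<1$, the integral representation
\[
(f*g)(r^2\eit)=\frac{1}{2\pi}\int_0^{2\pi} f(re^{i(\theta-\phi)})\,g(re^{i\phi})\,d\phi,
\]
which expresses $(f*g)_{r^2}$ as the convolution of $f_r$ and $g_r$ on the circle $\T$. Under the hypothesis $1/p_1+1/p_2\ge 1$, so that $p_3\in[1,\infty]$ satisfies $1/p_3=1/p_1+1/p_2-1$, Young's convolution inequality on $\T$ yields
\[
M_{p_3}(f*g,r^2)\ \le\ M_{p_1}(f,r)\,M_{p_2}(g,r),\qquad 0<r<1.
\]
This is the workhorse estimate; the cases where some $p_i=\infty$ are covered by the usual convention in Young's inequality.

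Next I would split the weight as $(1-r)^{\gamma_3}=(1-r)^{\gamma_1}(1-r)^{\gamma_2}$ and apply Hölder's inequality with exponents $q_1,q_2$ (conjugate in the sense $1/q_3=1/q_1+1/q_2$) with respect to the measure $\frac{dr}{1-r}$ on $(0,1)$:
\[
\left(\int_0^1 (1-r)^{\gamma_3 q_3-1} M_{p_3}^{q_3}(f*g,r^2)\,dr\right)^{1/q_3}
\le \|f\|_{(p_1,q_1,\gamma_1)}\,\|g\|_{(p_2,q_2,\gamma_2)}.
\]
The cases $q_1=\infty$ or $q_2=\infty$ follow from the obvious modification (pulling out the supremum) and the case $q_3=\infty$ from bounding the supremum by a product of suprema.

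Finally, I would invoke the equivalence \eqref{equiv} (the fact that $M_p(f,r)$ is increasing in $r$, so the seminorms built from $M_{p_3}(h,r)$ and $M_{p_3}(h,r^2)$ are comparable) to identify the left-hand side with a constant multiple of $\|f*g\|_{(p_3,q_3,\gamma_3)}$, completing the proof. I do not expect any real obstacle here: once the convolution identity and Young's inequality are in place, the proof is just Hölder in the radial variable; the only care needed is to handle the endpoint cases where one of the parameters is $\infty$ by the usual conventions.
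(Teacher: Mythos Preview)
Your proposal is correct and follows essentially the same approach as the paper: Young's convolution inequality on the circle to obtain $M_{p_3}(f*g,r^2)\le M_{p_1}(f,r)\,M_{p_2}(g,r)$, followed by H\"older's inequality in $L^{q}(\frac{dr}{1-r})$. Your write-up is slightly more explicit about the convolution identity and the use of \eqref{equiv}, but the argument is the same.
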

\begin{proof} By applying Young's convolution inequality,
$$
M_{p_3}(f*g, r^2)\le M_{p_1}(f, r)M_{p_2}(g, r),
$$
together with H\"older's inequality in  $L^{q}(\frac{dr}{1-r})$-spaces, we obtain the following estimate
$$\|(1-r)^{\gamma_1}M_{p_1}(f,r) (1-r)^{\gamma_2}M_{p_2}(g,r)\|_{L^{q_3}(\frac{dr}{1-r})}\le \|f\|_{(p_1,q_1,\gamma_1)}\|g\|_{(p_2,q_2,\gamma_2)}.$$
Hence, $\|f*g\|_{(p_3,q_3,\gamma_3)}\lesssim \|f\|_{(p_1,q_1,\gamma_1)}\|g\|_{(p_2,q_2,\gamma_2)} $.
\end{proof}

\section{Preliminaries on  fractional derivatives}

Let  $\beta>0$ and $\gamma\ge 0$, we shall use the notation \beq \label{nucleo1}K_{\beta-1}(z)=\frac{1}{(1-z)^\beta} = \sum_{n=0}^\infty \frac{\Gamma(n+\beta)}{n!\Gamma(\beta)} z^n\eeq
and
\beq \label{nucleo2} G_{\gamma}(z)= \sum_{n=0}^\infty \frac{n!\Gamma(\gamma+1)}{\Gamma(n+\gamma+1)} z^n.\eeq

In particular $K_0(z)= G_0(z)=\frac{1}{1-z}$. Observe that
for $\gamma >0$ we have $$G_\gamma(z)=\gamma \int_0^1\frac{(1-t)^{\gamma-1}}{1-tz}dt= \gamma \int_0^1 (1-t)^{\gamma-1}K_0(tz)dt.$$

\begin{lem}\label{fact} For $\beta, \gamma>0$ and $0< p,q<\infty$.
\beq \label{hpqK} K_{\beta-1}\in H(p,q,\gamma) \Longleftrightarrow \beta<\gamma+1/p \eeq
and
\beq \label{hpinftyK} K_{\beta-1}\in H(p,\infty,\gamma) \Longleftrightarrow \beta\le \gamma+1/p. \eeq

\end{lem}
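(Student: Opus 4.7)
The plan is to reduce both equivalences to the classical asymptotic
\begin{equation*}
M_p(K_{\beta-1},r)\approx\begin{cases} 1, & p\beta<1,\\ \bigl(\log\tfrac{1}{1-r}\bigr)^{1/p}, & p\beta=1,\\ (1-r)^{1/p-\beta}, & p\beta>1, \end{cases}\qquad r\to 1^-,
\end{equation*}
for the kernel $K_{\beta-1}(z)=(1-z)^{-\beta}$. This is a standard fact about the integral means of $(1-z)^{-\beta}$ and can be derived in a self-contained way from the elementary estimate $|1-re^{i\theta}|\approx(1-r)+|\theta|$ for $|\theta|\le\pi$, splitting the angular integral into $|\theta|\le 1-r$ and $|\theta|>1-r$. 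Once this estimate is in hand, both parts of the lemma become one-variable integration/supremum problems.

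For the case $H(p,\infty,\gamma)$: if $p\beta\le 1$ then the asymptotic gives $(1-r)^\gamma M_p(K_{\beta-1},r)\to 0$ as $r\to 1^-$ (using $\gamma>0$), so the supremum is finite; at the same time the inequality $\beta\le\gamma+1/p$ holds automatically since in this regime $\beta\le 1/p<\gamma+1/p$. If $p\beta>1$ then $(1-r)^\gamma M_p(K_{\beta-1},r)\approx(1-r)^{\gamma+1/p-\beta}$, which is bounded on $[0,1)$ if and only if $\beta\le\gamma+1/p$. Combining the two regimes yields (\ref{hpinftyK}).

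For the case $H(p,q,\gamma)$ with $q<\infty$ the split is analogous. If $p\beta\le 1$, then $(1-r)^{\gamma q-1}M_p^q(K_{\beta-1},r)$ is integrable near $r=1$ simply because $\gamma q>0$ (the possible logarithmic factor when $p\beta=1$ is harmless), and once again $\beta<\gamma+1/p$ is automatic. If $p\beta>1$, then the integrand is comparable to $(1-r)^{(\gamma+1/p-\beta)q-1}$, which is integrable on $(0,1)$ precisely when $\beta<\gamma+1/p$, giving (\ref{hpqK}). The only delicate point, and the one that accounts for the dichotomy between strict and non-strict inequality in the two statements, is the borderline $\beta=\gamma+1/p$ in the regime $p\beta>1$: there $(1-r)^\gamma M_p(K_{\beta-1},r)\approx 1$, which is bounded but produces a non-integrable factor $(1-r)^{-1}$ in the $H(p,q,\gamma)$-integral. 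The main obstacle, if any, is verifying the asymptotic for $M_p(K_{\beta-1},r)$; everything that follows is bookkeeping.
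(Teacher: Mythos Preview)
Your proposal is correct and follows essentially the same route as the paper: both arguments rest on the standard integral-means asymptotic for $(1-z)^{-\beta}$ (the paper cites \cite[Theorem 1.7]{HKZ}, while you sketch its derivation from $|1-re^{i\theta}|\approx(1-r)+|\theta|$) and then split into the cases $p\beta<1$, $p\beta=1$, $p\beta>1$ to reduce everything to elementary one-variable integrability or boundedness checks. The only cosmetic difference is that the paper states the upper bounds as $O(\cdot)$ for sufficiency and invokes the two-sided estimate separately for necessity, whereas you package both directions into a single $\approx$ from the outset.
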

\begin{proof}   Let us mention first the well known facts (see \cite[Theorem 1.7]{HKZ}) \begin{equation} \label{e1}
K_{\beta-1}\in H^p, \quad \beta<1/p,
\end{equation}

\begin{equation} \label{e2} M_p(K_{\beta-1},r) =O(\log^{\frac{1}{p}}(\frac{1}{1-r})), \quad \beta=1/p,
 \end{equation}
\begin{equation} \label{e3}
M_p(K_{\beta-1}, r)= O(\frac{1}{(1-r)^{\beta-1/p}}), \quad \beta>1/p.
\end{equation} From (\ref{e1})  we have that  $K_{\beta-1}\in H^p\subset  H(p,q,\gamma)$ for  $\beta<1/p$ and any $\gamma>0$.

Similarly using (\ref{e2})  we have
$\int_0^1 (1-r)^{\gamma q-1}(\log(\frac{1}{1-r}))^{\frac{q}{p}} dr<\infty$ for $\beta=1/p$ and  any $\gamma>0$.

Now if $1/p<\beta<\gamma +1/p$ we use (\ref{e3})
to obtain $K_{\beta-1}\in H(p,q,\gamma)$  because
$\int_0^1 \frac{(1-r)^{\gamma q-1}}{(1- r)^{q\beta-q/p}} dr<\infty$ .

Assume now that $\beta \ge \gamma+1/p$ and let us show that $K_{\beta-1}\notin H(p,q,\gamma) $. It suffices to see that   $K_{\beta-1}\notin H(p,\infty,\gamma) $. Since $\beta p>1$, if  $K_{\beta-1}\in H(p,\infty,\gamma) $, we have that
$$M^p_p(K_{\beta-1},r)=\int_0^{2\pi}\frac{1}{|1-re^{i\theta}|^{\beta p}} \frac{d\theta}{2\pi}\approx \frac{1}{(1-r)^{\beta p-1}} \lesssim \frac{1}{(1-r)^{p \gamma}}.$$ This gives a contradiction if $\beta>\gamma+1/p$.  In the case  $\gamma= \beta-1/p$ we also obtain that $K_{\beta-1}\notin H(p,q,\alpha) $ since $\int_0^1 \frac{(1-r)^{\gamma q-1}}{(1-r)^{ \gamma q}} dr=\infty$.

For the case $q=\infty$ the above proof works, with the difference that $K_{\beta-1}\in H(p,\infty, \gamma)$ for $\beta-1/p=\gamma$.
\end{proof}

\begin{defn} 
For each $\gamma>-1$ we define the fractional derivative  
$$D_{\gamma} f(z)= f*K_{\gamma} (z)= \sum_{n=0}^\infty \frac{\Gamma(n+\gamma+1)}{n!\Gamma(\gamma+1)} a_n z^n$$
and  the fractional integral by
$$  I_{\gamma} f(z)= f*G_{\gamma} (z)= \sum_{n=0}^\infty \frac{n!\Gamma(\gamma+1)}{\Gamma(n+\gamma+1)}a_n z^n.
$$

With this notation  for each $f\in \mathcal H(\D)$ we have
$ I_{\gamma}D_{\gamma}f= D_{\gamma}I_{\gamma}f=f.$

In particular $D_0f=I_0 f=f$ and writing $D_1=D$ and $I_1=I$ we have $$Df(z)=  (zf)'(z) \quad \hbox{and} \quad If(z)=\frac{1}{z}\int_0^z f(s)ds.$$

Also observe that for $\gamma>0$ we have 
$$I_{\gamma} f(z)=\gamma\int_0^1 (1-t)^{\gamma-1}f(tz)dt.$$

\end{defn}

The next result is part of the folklore, and its proof can be found in \cite{B4} but we include a proof here for completeness. We shall use the following elementary lemma.
\begin{lem} \label{comp} Let $\gamma>-1$. Then \beq\label{deriv}(\gamma+1)D_{\gamma+1}f=  D_\gamma Df + \gamma D_\gamma f.\eeq

\end{lem}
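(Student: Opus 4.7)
My plan is to verify the identity coefficient-by-coefficient, since both sides are linear in $f$ and all the operators involved act diagonally on the Taylor series. Writing $f(z)=\sum_{n=0}^\infty a_n z^n$, I observe that $Df(z)=\sum_{n=0}^\infty (n+1)a_n z^n$ and $D_\gamma f(z)=\sum_{n=0}^\infty c_n(\gamma)a_n z^n$ with $c_n(\gamma):=\Gamma(n+\gamma+1)/(n!\,\Gamma(\gamma+1))$. Thus the $n$-th coefficient of $D_\gamma Df+\gamma D_\gamma f$ equals $c_n(\gamma)\bigl((n+1)+\gamma\bigr)a_n=c_n(\gamma)(n+\gamma+1)a_n$.

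The key step is then the Gamma recursion $\Gamma(n+\gamma+2)=(n+\gamma+1)\Gamma(n+\gamma+1)$ and $\Gamma(\gamma+2)=(\gamma+1)\Gamma(\gamma+1)$, which gives
$$c_n(\gamma)(n+\gamma+1)=\frac{\Gamma(n+\gamma+2)}{n!\,\Gamma(\gamma+1)}=(\gamma+1)\,\frac{\Gamma(n+\gamma+2)}{n!\,\Gamma(\gamma+2)}=(\gamma+1)\,c_n(\gamma+1).$$
Since $(\gamma+1)c_n(\gamma+1)a_n$ is precisely $(\gamma+1)$ times the $n$-th coefficient of $D_{\gamma+1}f$, the identity (\ref{deriv}) follows after summing over $n$.

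There is essentially no obstacle here: the result is a purely formal identity between Hadamard multipliers with explicit Gamma-quotient symbols, valid for every $f\in\mathcal H(\mathbb D)$, with no convergence issue because the equality is term-by-term in the power series. The only thing to be careful about is the hypothesis $\gamma>-1$, which guarantees that $\Gamma(\gamma+1)$ and $\Gamma(\gamma+2)$ are finite and nonzero so that the coefficients $c_n(\gamma)$ and $c_n(\gamma+1)$ are well defined.
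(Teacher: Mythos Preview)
Your proof is correct and follows essentially the same approach as the paper: a term-by-term comparison of Taylor coefficients using the Gamma recursion $\Gamma(n+\gamma+2)=(n+\gamma+1)\Gamma(n+\gamma+1)$ and $\Gamma(\gamma+2)=(\gamma+1)\Gamma(\gamma+1)$. The paper's proof simply records the resulting coefficient identity in one displayed line, while you spell out the steps a bit more explicitly.
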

\begin{proof}
Note that
$$\frac{(n+1+\gamma)\Gamma(n+1+\gamma)}{n!( \gamma+1)\Gamma(\gamma+1)}=\frac{1}{\gamma+1}\frac{(n+1)\Gamma(n+1+\gamma)}{n! \Gamma(\gamma+1)} + \frac{\gamma}{\gamma+1}\frac{\Gamma(n+1+\gamma)}{n!\Gamma( \gamma+1)}.$$
\end{proof}

\begin{lem} \label{derivMNS} \textup{(\cite[Theorem A]{B4})} Let $\gamma,\alpha>0$, $1\le p\le \infty$, $0<q\le \infty$ and $f\in \mathcal H(\D)$. Then $f\in  H(p,q,\gamma)$ if and only if $ D_\alpha f\in H(p,q,\alpha+\gamma)$.
\end{lem}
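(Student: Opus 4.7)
My plan is to treat the two implications by different techniques: the forward one via the convolution estimate of Lemma \ref{lemacon}, and the reverse one by working directly with the integral representation $f=I_\alpha D_\alpha f$.

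For the forward direction, I write $D_\alpha f=f*K_\alpha$. Specialising \eqref{hpinftyK} of Lemma \ref{fact} to $\beta=\alpha+1$ and $p=1$ yields the borderline membership $K_\alpha\in H(1,\infty,\alpha)$. I then apply Lemma \ref{lemacon} with data $(p_1,q_1,\gamma_1)=(p,q,\gamma)$ and $(p_2,q_2,\gamma_2)=(1,\infty,\alpha)$: the compatibility condition $1/p+1\ge 1$ holds thanks to $p\ge 1$, and the resulting indices are $p_3=p$, $q_3=q$, $\gamma_3=\gamma+\alpha$, giving $D_\alpha f\in H(p,q,\gamma+\alpha)$ together with a quantitative norm inequality.

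The reverse direction is more subtle because the convolution lemma is not available: the obvious candidate $f=D_\alpha f*G_\alpha$ would force $G_\alpha$ into a mixed-norm space with the forbidden negative weight $-\alpha$. Instead I would exploit the explicit identity
$$f(z)=I_\alpha D_\alpha f(z)=\alpha\int_0^1(1-t)^{\alpha-1}D_\alpha f(tz)\,dt$$
together with Minkowski's inequality on $\mathbb{T}$ (this is where $p\ge 1$ enters) to obtain the pointwise bound $M_p(f,r)\le\alpha\int_0^1(1-t)^{\alpha-1}M_p(D_\alpha f,tr)\,dt$. The problem then reduces to bounding the $L^q((1-r)^{\gamma q-1}dr)$ norm of the right-hand side by the $L^q((1-r)^{(\alpha+\gamma)q-1}dr)$ norm of $M_p(D_\alpha f,r)$; for $q=\infty$ this is immediate from the Forelli--Rudin estimate $\int_0^1(1-t)^{\alpha-1}(1-tr)^{-(\alpha+\gamma)}dt\lesssim (1-r)^{-\gamma}$, while for $1\le q<\infty$ it follows by pushing the $L^q$-norm inside the $t$-integral via the integral form of Minkowski and verifying a Schur-type estimate for the Volterra kernel $(r-s)^{\alpha-1}\mathbf{1}_{s<r}$ with the prescribed weights.

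The main obstacle is the case $0<q<1$, where Minkowski is unavailable. My strategy there is to use the quasi-subadditivity $(a+b)^q\le a^q+b^q$ combined with a dyadic decomposition of the $t$-integral over the intervals $[1-2^{-k-1},1-2^{-k}]$, on each of which $(1-t)^{\alpha-1}$ is essentially constant, thereby reducing the problem to a discrete convolution estimate that can be handled term by term. A possible alternative is to bootstrap from the classical Hardy--Stein--Littlewood theorem (the integer-order case $\alpha\in\mathbb{N}$) by iterating the identity $(\alpha+1)D_{\alpha+1}f=D_\alpha Df+\alpha D_\alpha f$ of Lemma \ref{comp}, which couples fractional and integer derivatives and allows one to transfer the result between consecutive values of $\alpha$.
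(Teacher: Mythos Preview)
Your forward direction is exactly the paper's argument. For the reverse direction, both you and the paper start from the same integral representation $f=I_\alpha D_\alpha f$ together with Minkowski in $p$ to reach $M_p(f,r)\le\alpha\int_0^1(1-t)^{\alpha-1}M_p(D_\alpha f,rt)\,dt$, and the $q=\infty$ case is handled identically.

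The difference lies in how the weighted $L^q$ estimate in $r$ is established. The paper does \emph{not} run a single argument for all $\alpha>0$: it first treats $\alpha\ge 1$, exploiting the pointwise inequality $(1-t)^{\alpha-1}\le(1-rt)^{\alpha-1}$ (which fails for $\alpha<1$), and uses integration by parts combined with H\"older for $1<q<\infty$ and the inclusion $H(p,q,\alpha)\subset H(p,1,\alpha)$ for $0<q\le 1$; only afterwards does it bootstrap to $0<\alpha<1$ via Lemma \ref{comp}, precisely the ``alternative'' you mention. Your Schur-test route for $1\le q<\infty$ and dyadic decomposition for $0<q<1$ are legitimate substitutes that, if carried out carefully (the dyadic change of variable $s=t_kr$ requires splitting off the last dyadic block where $(t_k-s)^{\gamma q-1}$ is not comparable to $(1-s)^{\gamma q-1}$), have the advantage of working uniformly in $\alpha>0$ without the case distinction. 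The paper's approach is more self-contained within the mixed-norm framework; yours is closer to a direct weighted Hardy-type inequality.
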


\begin{proof}  Assume that $f\in  H(p,q,\gamma)$. Since $D_\alpha f= K_{\alpha}* f$ and from (\ref{hpinftyK}) we know that $K_{\alpha}\in H(1,\infty,\alpha)$ then  $D_\alpha f\in H(p,q,\gamma+\alpha)$ using Lemma \ref{lemacon}.

  Conversely, assume that  $ D_\alpha f\in H(p,q,\alpha+\gamma)$, using that 
 $$
 f(z)= I_\alpha D_\alpha f(z)=\alpha\int_0^1 (1-t)^{\alpha-1}D_\alpha f(tz)dt
 $$
and vector-valued Minkowski's inequality we get
\beq
M_p(f,r) \le \alpha \int_0^1(1-t)^{\alpha-1}M_p(D_\alpha f,r t)dt = \alpha \| (D_\alpha f)_r\|_{(p,1,\alpha)}.
\eeq

For $q=\infty$ we  easily obtain that $f\in H(p,\infty, \gamma)$ since
$$M_p(f,r)\lesssim \int_0^1 \frac{(1-t)^{\alpha-1}}{(1-rt)^{\alpha+\gamma}}dt\lesssim  \frac{1}{(1-r)^{\gamma}}.$$

We deal first with the case $\alpha\ge 1 $.
For $0<q\le 1$  we can use that $H(p,q,\alpha)\subset H(p,1,\alpha)$ and, since $(1-t)^{\alpha-1}\le (1-rt)^{\alpha-1}$ for $0<t,r<1$, we obtain
\ba 2rM^q_p(f,r) &\le&  2r\alpha^q \| (D_\alpha f)_{r}\|^q_{(p,q,\alpha)}\\
&\lesssim& \int_0^r (1-t)^{\alpha q-1}M^q_p(D_\alpha f, t)dt.
\ea
This implies that
\ba
\|f\|^q_{(p,q,\gamma)}&\lesssim&  \int_0^1(1-r)^{\gamma q-1}(\int_0^r (1-t)^{\alpha q-1}M^q_p(D_\alpha f, t)dt)dr\\
&=&  \int_0^1(\int_t^1(1-r)^{\gamma q-1}dr)(1-t)^{\alpha q-1}M^q_p(D_\alpha f, t)dt\\
&\approx&  \|D_\alpha f\|^q_{(p,q,\alpha+\gamma)}.
\ea
For $1<q<\infty$ we denote $$A=\int_0^1 (1-r)^{\gamma q-1}(\int_0^r (1-t)^{\alpha-1}M_p(D_\alpha f, t)dt)^q dr.$$
Now using  integration by parts and H\"older's inequality we have
\ba
A&\lesssim&  \int_0^1 (1-r)^{\gamma q +\alpha-1}M_p(D_\alpha f, r)(\int_0^r (1-t)^{\alpha-1}M_p(D_\alpha f, t)dt)^{q-1} dr\\
&\lesssim& \Big(\int_0^1 (1-r)^{q(\gamma  +\alpha)-1}M^q_p(D_\alpha f, r) dr \Big)^{1/q} A^{1/q'}.
\ea

We use again the estimate
$$rM_p(f,r^2)\lesssim\int_0^r (1-t)^{\alpha-1}M_p(D_\alpha f, t)dt  $$
and then, $\|f\|_{(p,q,\gamma)}\lesssim A^{1/q}\lesssim\|D_\alpha f\|_{(p,q,\alpha+\gamma)}$.

We  deal now with $\alpha<1$. This case follows from the previous one, because  if $D_\alpha f \in H(p,q,\alpha+\gamma)$ for some $0<\alpha<1$ then, using  the direct implication we have that  $DD_\alpha f \in H(p,q,\alpha+\gamma+1)$. Now, invoking Lemma \ref{comp}, we obtain $D_{\alpha+1}f\in H(p,q,\alpha+\gamma+1)$ and finally the previous case gives that $f\in H(p,q,\gamma).$
\end{proof}

\begin{cor}  \label{coroderiv}  Let $0<\alpha_1<\gamma$,  $\alpha_2\ge 0$,  $1\le p\le \infty$ and $0<q\le \infty$. Then
$D_{\alpha_1}f\in H(p,q, \gamma)$ if and only if $ D_{\alpha_2}f \in H(p,q, \gamma-\alpha_1+\alpha_2)$.
\end{cor}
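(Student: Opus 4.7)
The plan is to deduce this immediately from Lemma \ref{derivMNS} by treating $f$ itself as the intermediate object. The hypothesis $\alpha_1 < \gamma$ is there precisely so that the ``target'' weight when we peel off $D_{\alpha_1}$, namely $\gamma - \alpha_1$, is strictly positive and therefore a valid parameter for the mixed norm space to which Lemma \ref{derivMNS} applies.

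First I would apply Lemma \ref{derivMNS} with parameters $\gamma' = \gamma - \alpha_1 > 0$ and $\alpha = \alpha_1 > 0$: this gives that
\[
D_{\alpha_1} f \in H(p,q,\gamma) \iff f \in H(p,q,\gamma - \alpha_1).
\]
Then I would split on whether $\alpha_2 > 0$ or $\alpha_2 = 0$. If $\alpha_2 > 0$, a second application of Lemma \ref{derivMNS} (now with weight $\gamma - \alpha_1 > 0$ and fractional order $\alpha_2 > 0$) yields
\[
f \in H(p,q,\gamma - \alpha_1) \iff D_{\alpha_2} f \in H(p,q,\gamma - \alpha_1 + \alpha_2),
\]
and chaining the two equivalences gives the claim. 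If instead $\alpha_2 = 0$, then $D_{\alpha_2} f = f$ and the first equivalence already is the statement to be proved. Since the new target weight $\gamma - \alpha_1 + \alpha_2$ is automatically positive under the stated hypotheses, all invocations of Lemma \ref{derivMNS} are legitimate.

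There is really no genuine obstacle here; the only thing that requires a word of care is making sure the intermediate weight $\gamma - \alpha_1$ lies in the admissible range $(0,\infty)$ where Lemma \ref{derivMNS} was stated, which is exactly what $0 < \alpha_1 < \gamma$ secures, and separating off the trivial case $\alpha_2 = 0$ where $D_{\alpha_2}$ is the identity so that Lemma \ref{derivMNS} cannot be invoked (it required $\alpha > 0$).
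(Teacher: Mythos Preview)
Your proposal is correct and matches the paper's intended argument: the corollary is stated immediately after Lemma \ref{derivMNS} with no proof, and the natural derivation is exactly the two-step application of that lemma (with the $\alpha_2=0$ case handled separately since $D_0=\mathrm{id}$) that you describe. Your remarks about why the hypothesis $0<\alpha_1<\gamma$ is needed to keep the intermediate weight $\gamma-\alpha_1$ positive are also on point.
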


\section{ The fundamental function  \texorpdfstring{$F_\mu$}{Fmu}}
\begin{defn}  Given a positive Borel measure $\mu$ defined on $[0,1)$ we write
$$
F_\mu(z)= \sum_{n=0}^\infty \mu_n z^n= \int_0^1 \frac{d\mu(t)}{1-tz},
$$
where $\mu_n=\int_0^1 t^n d\mu(t)$.

\end{defn}
\begin{rem}
    For $\beta>0$ we denote $d\mu_\beta(t)= \beta(1-t)^{\beta-1} dt$ and
$F_{\mu_\beta}(z)= G_\beta(z).$
\end{rem}

\begin{lem} \label{derivfmu} Given a positive Borel measure $\mu$ defined on $[0,1)$ and $\alpha> -1$ then
$$D_\alpha F_\mu(z)=  \int_0^1 \frac{d\mu(t)}{(1-tz)^{\alpha+1}}=\sum_{n=0}^\infty \frac{\Gamma(n+\alpha+1)}{n!\Gamma(\alpha+1)} \mu_n z^n.$$
In particular $$D_\alpha F_\mu\in H(p,\infty, \alpha+1/p'), \quad 1\le p\le \infty, \quad \alpha >-1/p'$$ and 
$$D_\alpha F_\mu\in H(p,1, \gamma), \quad 1\le p\le \infty, \quad \gamma>\alpha +1/p'.$$
\end{lem}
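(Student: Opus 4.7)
The plan is to prove the three assertions in turn, relying on the integral representation of $D_\alpha F_\mu$ together with Minkowski's integral inequality and the sharp $M_p$-estimates for the Cauchy kernels $K_\alpha$ established in the proof of Lemma \ref{fact}.

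First I would verify both representations. The series form $D_\alpha F_\mu(z) = \sum_{n=0}^\infty \frac{\Gamma(n+\alpha+1)}{n!\Gamma(\alpha+1)} \mu_n z^n$ is immediate from the definition $D_\alpha F_\mu = F_\mu * K_\alpha$ and $F_\mu(z) = \sum_n \mu_n z^n$. Writing $\mu_n = \int_0^1 t^n d\mu(t)$ and swapping sum and integral (Fubini, justified by the uniform bound $\sum_n c_n r^n = (1-r)^{-(\alpha+1)}$ for $|z|\le r < 1$ together with the finiteness of $\mu$), one obtains
\[
D_\alpha F_\mu(z) = \int_0^1 \sum_{n=0}^\infty \frac{\Gamma(n+\alpha+1)}{n!\Gamma(\alpha+1)} (tz)^n\, d\mu(t) = \int_0^1 \frac{d\mu(t)}{(1-tz)^{\alpha+1}}.
\]

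For the embedding $D_\alpha F_\mu \in H(p,\infty, \alpha + 1/p')$, I apply Minkowski's integral inequality to the integral representation to get
\[
M_p(D_\alpha F_\mu, r) \le \int_0^1 M_p(K_\alpha, tr)\, d\mu(t).
\]
The hypothesis $\alpha > -1/p'$ is precisely $\alpha + 1 > 1/p$, so estimate (\ref{e3}) in the proof of Lemma \ref{fact} gives $M_p(K_\alpha, \rho) \lesssim (1-\rho)^{-(\alpha + 1/p')}$ (this covers $p=1$, where $M_1(K_\alpha,\rho)\lesssim(1-\rho)^{-\alpha}$ for $\alpha>0$, and $p=\infty$, where $M_\infty(K_\alpha,\rho)=(1-\rho)^{-(\alpha+1)}$). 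Combining with $1 - tr \ge 1 - r$ and $\mu([0,1))<\infty$ yields
\[
(1-r)^{\alpha+1/p'} M_p(D_\alpha F_\mu, r) \lesssim \mu([0,1)).
\]

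For $D_\alpha F_\mu \in H(p, 1, \gamma)$ with $\gamma > \alpha + 1/p'$, I multiply the previous pointwise bound by $(1-r)^{\gamma-1}$, integrate in $r$, and apply Fubini:
\[
\|D_\alpha F_\mu\|_{(p,1,\gamma)} \lesssim \int_0^1\!\!\int_0^1 \frac{(1-r)^{\gamma-1}}{(1-tr)^{\alpha+1/p'}}\, dr\, d\mu(t).
\]
Using $1-tr \ge 1-r$, the inner integral is dominated by $\int_0^1 (1-r)^{\gamma-\alpha-1/p'-1} dr$, which is finite by the hypothesis $\gamma>\alpha+1/p'$, and the outer integration produces a factor $\mu([0,1))$. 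No significant obstacle is expected; the only care needed is that the hypothesis $\alpha>-1/p'$ keeps the $M_p$-estimate of $K_\alpha$ strictly in the polynomial regime, avoiding the borderline logarithmic case of Lemma \ref{fact}.
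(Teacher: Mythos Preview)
Your proposal is correct and follows essentially the same route as the paper: Minkowski's integral inequality applied to the integral representation, combined with the $M_p$-estimate for $K_\alpha$, and then Fubini for the $H(p,1,\gamma)$ part. The only cosmetic difference is that the paper obtains the integral representation via the contour form of the Hadamard product, whereas you swap sum and integral directly in the Taylor series; both are straightforward.
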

\begin{proof}
Note that $D_\alpha f(z)=K_\alpha*f(z)=\int_0^{2\pi} K_\alpha (e^{i\theta}z)f(e^{-i\theta})\frac{d\theta}{2\pi}$ and then
$$ D_\alpha F_\mu(z)= \int_0^1(\int_0^{2\pi}\frac{ K_\alpha (e^{i\theta}z)}{1-te^{-i\theta}}\frac{d\theta}{2\pi})d\mu(t)=\int_0^1K_\alpha (tz)d\mu(t).$$
Now using Minkowski's inequality and $\alpha>-1/p'$
$$M_p(D_\alpha F_\mu,r)\lesssim \int_0^1 \frac{d\mu(t)}{(1-rt)^{\alpha +1/p'}}\lesssim \frac{1}{(1-r)^{\alpha+ 1/p'}}.$$
Similarly, if $\gamma>\alpha+1/p'$
$$\int_0^1 (1-r)^{\gamma-1} M_p(D_\alpha F_\mu,r)dr \lesssim \int_0^1(\int_0^1 \frac{(1-r)^{\gamma-1}}{(1-r)^{\alpha+1/p'}}dr)d\mu(t)<\infty.$$

\end{proof}

From Lemma \ref{derivfmu}  we know that for any positive Borel measure $\mu$ we always have $D_\alpha F_\mu\in A^\infty_{\alpha+1}$ for any $\alpha> -1$ and $D_\alpha F_\mu\in H(\infty,1,\gamma)$ (and hence $D_\alpha F_\mu\in H(p, q,\gamma)$) for any $p,q\ge 1$ and $\gamma>\alpha+1$.
We are interested in finding when $D_\alpha F_\mu\in A^\infty_\gamma$ for $\gamma < \alpha+1$ or $D_\alpha F_\mu\in H(p,q,\gamma)$ for $q<1$ or $\gamma\le \alpha+1$. 
 \begin{prop} \label{firstcase} Let $\gamma>0, \alpha>-1$ and  let $\mu$ be a positive Borel measure defined on $[0,1)$.

\begin{enumerate}[(i)]
    \item   Let $1\le p<\infty$, $-1<\alpha\le 0$. Then
$$ D_\alpha F_\mu\in H^p \Longleftrightarrow \sum_{n=0}^\infty \frac{\mu^p_n}{(n+1)^{2-p(1+\alpha)}}<\infty.$$

\item $D_\alpha F_\mu\in A^\infty_\gamma \Longleftrightarrow \mu_n =O( (n+1)^{\gamma-\alpha-1 }).$

\item $D_\alpha F_\mu\in H(2, \infty, \gamma)  \Longleftrightarrow  \mu_n =O( (n+1)^{\gamma-\alpha-1/2}) .$

\item Let $1\le p<\infty$ and $-1<\alpha<\gamma$. Then $$D_\alpha F_\mu\in H(p, \infty, \gamma)  \Longleftrightarrow  \mu_n =O( (n+1)^{\gamma-\alpha-1/p' })  .$$
\end{enumerate}

\end{prop}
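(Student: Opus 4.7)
All four parts are anchored on the Taylor expansion $D_\alpha F_\mu(z)=\sum_n c_n z^n$ with nonnegative coefficients $c_n=\frac{\Gamma(n+\alpha+1)}{n!\Gamma(\alpha+1)}\mu_n\approx (n+1)^\alpha\mu_n$, on the integral formula $D_\alpha F_\mu(z)=\int_0^1 (1-tz)^{-\alpha-1}\,d\mu(t)$, and on the fact that $(\mu_n)$ is decreasing because $\mu_n-\mu_{n+1}=\int_0^1 t^n(1-t)\,d\mu(t)\ge 0$. I would also use the well-known equivalence, for positive $\mu$ on $[0,1)$ and $s>0$, between $\mu_n=O((n+1)^{-s})$ and the $s$-Carleson bound $\mu([r,1))=O((1-r)^s)$: one direction follows from $\mu_n\ge r^n\mu([r,1))$ at $r=1-1/(n+1)$, the other from $\mu_n=n\int_0^1 s^{n-1}\mu([s,1))\,ds$.

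For \emph{part (ii)}, evaluating the integral formula at $z=r=1-1/(n+1)$ and noting that $(1-rt)^{-\alpha-1}\approx (n+1)^{\alpha+1}$ uniformly for $t\in[r,1)$ yields $(n+1)^{\alpha+1}\mu([r,1))\lesssim D_\alpha F_\mu(r)\lesssim (1-r)^{-\gamma}$ and thus $\mu_n\lesssim (n+1)^{\gamma-\alpha-1}$ in the nontrivial range $\gamma<\alpha+1$ (the complementary range is automatic from $\mu_n\le\mu_0$ and Lemma~\ref{derivfmu}). Conversely, the moment hypothesis gives the Carleson bound $\mu([r,1))\lesssim (1-r)^{\alpha+1-\gamma}$, and the standard estimate $\int_0^1(1-t)^{\sigma-1}(1-rt)^{-\beta}\,dt\approx (1-r)^{\sigma-\beta}$ ($0<\sigma<\beta$) applied through the distribution function produces $D_\alpha F_\mu(r)\lesssim (1-r)^{-\gamma}$; nonnegativity of the coefficients upgrades this to $|D_\alpha F_\mu(z)|\le D_\alpha F_\mu(|z|)\lesssim (1-|z|)^{-\gamma}$. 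For \emph{part (iv)}, the necessity is for free: the inclusion $H(p,\infty,\gamma)\subset H(\infty,\infty,\gamma+1/p)=A^\infty_{\gamma+1/p}$ from (\ref{inclu}) composed with (ii) telescopes the exponent to $\gamma+1/p-\alpha-1=\gamma-\alpha-1/p'$. The sufficiency uses Minkowski's integral inequality $M_p(D_\alpha F_\mu,r)\le \int_0^1 M_p(K_\alpha,rt)\,d\mu(t)$ together with the three regimes (\ref{e1})--(\ref{e3}): in the main range $\alpha>-1/p'$ one has $M_p(K_\alpha,rt)\approx (1-rt)^{-\alpha-1/p'}$, and the same Carleson integral estimate as in (ii), now with $\beta=\alpha+1/p'$ and $s=\alpha+1/p'-\gamma$, closes the argument. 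The borderline $\alpha=-1/p'$ contributes a harmless logarithmic factor, and the regime $\alpha<-1/p'$ is covered directly by Lemma~\ref{derivfmu}.

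For \emph{part (iii)}, the Parseval identity $M_2^2(D_\alpha F_\mu,r)=\sum_n c_n^2 r^{2n}$ makes the sufficiency immediate: $\mu_n\lesssim (n+1)^{\gamma-\alpha-1/2}$ yields $c_n^2\lesssim (n+1)^{2\gamma-1}$ and hence $M_2^2(r)\lesssim (1-r)^{-2\gamma}$. Necessity is the delicate point: the one-term bound $c_N^2 r^{2N}\le M_2^2(r)$ only gives $\mu_N\lesssim N^{\gamma-\alpha}$, short by a factor of $N^{1/2}$. I would recover the sharp exponent by summing over a dyadic block $n\in[N,2N]$, where the monotonicity of $(\mu_n)$ delivers $c_n^2\gtrsim N^{2\alpha}\mu_{2N}^2$; choosing $r=1-1/N$, the lower bound $N\cdot N^{2\alpha}\mu_{2N}^2\lesssim N^{2\gamma}$ produces $\mu_{2N}\lesssim N^{\gamma-\alpha-1/2}$.

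For \emph{part (i)}, the restriction $-1<\alpha\le 0$ ensures that both $(n+1)^\alpha$ and $\mu_n$ are positive and decreasing, hence so is $c_n$. The classical Hardy--Littlewood theorem on $H^p$-membership of power series with nonnegative decreasing Taylor coefficients ($1\le p<\infty$) then asserts $D_\alpha F_\mu\in H^p\iff \sum_n (n+1)^{p-2} c_n^p<\infty$; substituting $c_n\approx (n+1)^\alpha\mu_n$ simplifies the exponent $p-2+p\alpha=p(1+\alpha)-2$ into exactly the stated series. The main obstacle across the proposition is extracting the sharp $1/p'$ improvement over the naive coefficient bound: monotonicity of $(\mu_n)$ furnishes it in (iii) via dyadic summation, and the bootstrap through the pointwise inclusion $H(p,\infty,\gamma)\hookrightarrow A^\infty_{\gamma+1/p}$ then propagates it to general $p$ in (iv), avoiding any direct Fourier-analytic coefficient extraction in the $L^p$-mixed-norm setting.
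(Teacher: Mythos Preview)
Your arguments for (i), (ii), and (iii) are essentially the paper's, differing only in presentation: in (ii) the paper works directly with the Taylor series $M_\infty(D_\alpha F_\mu,r)\approx\sum_n (n+1)^\alpha\mu_n r^n$ and extracts $\mu_{2n}(n+1)^{\alpha+1}\lesssim\sum_{k=n}^{2n}\mu_k(k+1)^\alpha\lesssim(n+1)^\gamma$ via the same dyadic-block device you describe in (iii), while you route the same idea through the integral formula and the Carleson condition. Both are correct and equally short.

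Part (iv) is where you take a genuinely different path. The paper's argument is: for $-1<\alpha\le 0$ the coefficients $(n+1)^\alpha\mu_n$ are still decreasing, so part (i) upgrades to the two-sided formula $M_p(D_\alpha F_\mu,r)^p\approx\sum_n\mu_n^p(n+1)^{p(1+\alpha)-2}r^{np}$, after which the dyadic extraction from (ii), applied to this power series, yields the moment bound; the range $0<\alpha<\gamma$ is then reduced to $\alpha=0$ via the isomorphism $D_\alpha F_\mu\in H(p,\infty,\gamma)\iff F_\mu\in H(p,\infty,\gamma-\alpha)$ of Lemma~\ref{derivMNS}. Your route avoids both of these ingredients: necessity comes for free from the inclusion $H(p,\infty,\gamma)\subset A^\infty_{\gamma+1/p}$ combined with (ii), and sufficiency from Minkowski plus the Carleson estimate~(\ref{carlesonpoisson2}). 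Your approach is a bit more elementary and in fact does not use the hypothesis $\alpha<\gamma$ at all, whereas the paper needs it so that $\gamma-\alpha>0$ when invoking Lemma~\ref{derivMNS}. On the other hand, the paper's method buys the two-sided bound recorded as~(\ref{new}), which is reused in the proof of Theorem~\ref{equivfmu}; your Minkowski argument only produces the upper half. One small inaccuracy: your appeal to Lemma~\ref{derivfmu} for the range $\alpha<-1/p'$ is misplaced since that lemma assumes $\alpha>-1/p'$; what you actually need there is simply $K_\alpha\in H^p$ from~(\ref{e1}) together with Minkowski, and you should also note explicitly (as you did in (ii)) that when $\gamma\ge\alpha+1/p'$ both sides of the equivalence hold trivially.
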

 \begin{proof}

 (i) Since the Taylor coefficients of $D_\alpha F_\mu$ are given by the decreasing sequence $(\gamma_n)$ where $\gamma_n=\mu_n \frac{\Gamma(n+\alpha+1)}{n!\Gamma(\alpha+1)}\approx \mu_n(n+1)^\alpha$, we can use (see \cite{HL} for $1<p<\infty$ and \cite{P} for $p=1$) that \beq \label{monotoneintmeans}  \|D_\alpha F_\mu\|^p_{H^p}\approx \sum_{n=0}^\infty \frac{\gamma_n^p}{(n+1)^{2-p}}.\eeq
 This implies the result.
 
 (ii) From Lemma \ref{derivfmu} we obtain that \beq \label{minfty} M_\infty (D_\alpha F_\mu, r)=\int_0^1 \frac{d\mu(t)}{(1-rt)^{\alpha+1}}=\sum_{n=0}^\infty \frac{\Gamma(n+\alpha+1)}{n!\Gamma(\alpha+1)} \mu_n r^n.\eeq
Assume that 
$\sum_{n=0}^\infty (n+1)^\alpha \mu_n r^n\lesssim \frac{1}{(1-r)^\gamma}$.
 Hence, selecting $r= 1-\frac{1}{n+1}$ we get $$\mu_{2n}(n+1)^{\alpha+1} \lesssim \sum_{k=n}^{2n} \mu_k(k+1)^\alpha\lesssim  (n+1)^\gamma.$$
The converse is straightforward.

(iii) Use  that $$M^2_2 (D_\alpha F_\mu,r)\approx \sum_{n=0}^\infty (n+1)^{2\alpha} \mu_n^2 r^{2n}\lesssim \frac{1}{(1-r)^{2\gamma}}$$
and argue as in (ii).

(iv)  For $-1<\alpha\le 0$, due to (i) we can write
\beq \label{new}
M_p(D_\alpha F_\mu,r)\approx (\sum_{n=0}^\infty \frac{\mu^p_n r^{np}}{(n+1)^{2-p(1+\alpha)}})^{1/p}.
\eeq
Hence, $D_\alpha F_\mu \in H(p,\infty,\gamma)$ is equivalent to
 $$\sum_{n=0}^\infty \frac{\mu^p_n r^{np}}{(n+1)^{2-p(1+\alpha)}}=O(\frac{1}{(1-r)^{\gamma p}})$$
 which, arguing as in (ii) means $\mu_n=O((n+1)^{\gamma-\alpha-1/p'}).$

For $0<\alpha<\gamma$, since, due to Lemma \ref{derivMNS},  $D_\alpha F_\mu\in H(p, \infty, \gamma)$ is equivalent to $ F_\mu\in H(p, \infty, \gamma-\alpha)$ we can use the previous case for $\alpha=0$ to get the desired result.
 \end{proof}
 
 To give a description of the measures satisfying that  $D_\alpha F_\mu\in H(p,q,\gamma)$ we introduce the Kellogg spaces (see \cite{Ke}). For $0<p,q<\infty$  we denote $\ell(p,q)$ the space of sequences $(a_k)_{k=0}^\infty$ of complex numbers such that
\beq \label{kellogg}
\|(a_k)\|_{(p,q)}=\left( \sum_{n=0}^\infty ( \sum_{k\in I_n} |a_k|^p)^{q/p}\right)^{1/q}<\infty
\eeq
where $I_0= \{0\}$ and $I_n= [2^{n-1}, 2^n)\cap \N$ and the obvious modifications for $p=\infty$ and $q=\infty$.
\begin{thm}  \label{equivfmu} Let $ q, \gamma>0$, $\alpha>-1$ and let $\mu$  be a positive Borel measure defined on $[0,1)$.

\begin{enumerate}[(i)]
    \item  $D_\alpha F_\mu\in H(\infty,1,\gamma)
\Longleftrightarrow ( (n+1)^{\alpha-\gamma}\mu_n)\in \ell^1.$

\item $D_\alpha F_\mu\in H(\infty, q, \gamma)   \Longleftrightarrow
( (n+1)^{\alpha-\gamma+1-1/q}\mu_n)\in \ell^q.$

\item  Let $1\le p<\infty$ and $-1<\alpha < \gamma$.  Then the following are equivalent:
\begin{enumerate}[(a)]
    \item $D_\alpha F_\mu\in H(p, q, \gamma).$

    \item $(\mu_n (n+1)^{-2/p+(\alpha-\gamma+1)})\in \ell(p,q)$.

    \item $((n+1)^{\alpha-\gamma+1/p'-1/q}\mu_n)\in \ell^q.$
\end{enumerate}

\end{enumerate}

\end{thm}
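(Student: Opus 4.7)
Throughout let $g(r):=M_\infty(D_\alpha F_\mu,r)=\sum_{n\ge 0}c_n r^n$ with $c_n=\frac{\Gamma(n+\alpha+1)}{n!\Gamma(\alpha+1)}\mu_n\approx (n+1)^\alpha\mu_n$; the identity $M_\infty=g$ holds because $c_n\ge 0$, and the same positivity makes $g$ increasing on $[0,1)$. Part (i) is then immediate: Fubini and the Beta integral give
\[
\|D_\alpha F_\mu\|_{(\infty,1,\gamma)}=\sum_n c_n B(\gamma,n+1)\approx \sum_n (n+1)^{\alpha-\gamma}\mu_n.
\]

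For Part (ii) the plan is dyadic discretization with $r_n:=1-2^{-n}$: monotonicity of $g$ yields $\|D_\alpha F_\mu\|_{(\infty,q,\gamma)}^q\approx \sum_n 2^{-n\gamma q}g(r_n)^q$. Splitting $[0,1)$ into dyadic shells $J_k=[1-2^{-k},1-2^{-k-1})$ in the integral representation and using $1-(1-2^{-n})t\approx \max(1-t,2^{-n})$ gives
\[
g(r_n)\approx \sum_{k\le n}2^{k(\alpha+1)}\nu_k+2^{n(\alpha+1)}\mu_{2^n},
\]
where $\nu_k=\mu(J_k)$ and I use the identification $\mu_{2^n}\approx \mu([1-2^{-n},1))$ (from $t^{2^n}\approx e^{-2^n(1-t)}$). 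The lower bound $g(r_n)\gtrsim 2^{n(\alpha+1)}\mu_{2^n}$, combined with the dyadic-to-pointwise equivalence $\sum_n(n+1)^\sigma\mu_n^q\approx \sum_m 2^{m(\sigma+1)}\mu_{2^m}^q$ valid for monotone $(\mu_n)$, gives necessity; sufficiency follows from a discrete Hardy inequality $\sum_n w^n(\sum_{k\le n}b_k)^q\lesssim \sum_n w^n b_n^q$ (Abel summation for $q\ge 1$, subadditivity for $q<1$) applied to $b_k=2^{k(\alpha+1)}\nu_k$ and the trivial bound $\nu_k\le \mu_{2^k}$.

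For Part (iii) I split into (a)$\Leftrightarrow$(b) and (b)$\Leftrightarrow$(c). For (a)$\Leftrightarrow$(b), the case $0<\alpha<\gamma$ reduces to $\alpha=0$ via Lemma \ref{derivMNS}, which replaces the pair $(\alpha,\gamma)$ with $(0,\gamma-\alpha)$; the range $-1<\alpha\le 0$ is handled directly since then $c_n=(n+1)^\alpha\mu_n$ is decreasing. Applying the Hardy--Littlewood identity \eqref{monotoneintmeans} to the dilates yields $M_p(D_\alpha F_\mu,r)^p\approx \sum_k c_k^p(k+1)^{p-2}r^{kp}$, and a second dyadic discretization together with the Hardy inequality at exponent $q/p$ identifies the norm with $\|(\mu_k(k+1)^{-2/p+\alpha-\gamma+1})\|_{\ell(p,q)}^q$, which is (b). The equivalence (b)$\Leftrightarrow$(c) is then pure monotonicity bookkeeping: $\sum_{k\in I_n}\mu_k^p\approx 2^n\mu_{2^n}^p$ and $\sum_n(n+1)^\tau\mu_n^q\approx \sum_m 2^{m(\tau+1)}\mu_{2^m}^q$ reduce both conditions to $\sum_m 2^{mq(\alpha-\gamma+1/p')}\mu_{2^m}^q$.

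The main obstacle I anticipate is the dyadic comparison for $(\mu_n)$: monotonicity alone does not imply the doubling $\mu_{2^{n-1}}\approx \mu_{2^n}$, which fails for measures supported strictly inside $[0,1)$. Happily, the failure of doubling forces $\mu_n$ to decay at least geometrically, and both (b) and (c) then converge trivially; separating the slow-decay and geometric-decay regimes (or, equivalently, threading the $\max(1-t,2^{-n})$ dichotomy in the kernel through the estimate on $g(r_n)$) is what makes the two-sided comparisons robust.
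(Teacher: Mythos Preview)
Your plan is correct and follows the same overall architecture as the paper: Fubini for (i), the Hardy--Littlewood coefficient formula \eqref{monotoneintmeans} for monotone coefficients when $-1<\alpha\le 0$, and the reduction of $0<\alpha<\gamma$ to $\alpha=0$ via Lemma~\ref{derivMNS}. The one genuine difference is in how you handle the passage from $\int_0^1(1-r)^{\gamma q-1}(\sum_k a_k r^k)^q\,dr$ to the $\ell(1,q)$ norm of $((k+1)^{-\gamma}a_k)$. The paper simply invokes \cite[Lemma~2.1]{B4} (recorded here as \eqref{lpqequiv}), which packages exactly that identification; you instead redo it by hand via dyadic discretization $r_n=1-2^{-n}$, the kernel dichotomy $1-r_nt\approx\max(1-t,2^{-n})$, and a discrete Hardy inequality. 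Both routes yield the same intermediate object and the same monotonicity bookkeeping for (b)$\Leftrightarrow$(c); yours is more self-contained, the paper's is shorter.

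One simplification: your final paragraph about doubling is unnecessary. You do not need $\mu_{2^{n-1}}\approx\mu_{2^n}$. The paper's argument (and the natural one for you too) is that monotonicity alone gives
\[
\mu_{2^n}^p\,2^{n\sigma}\ \lesssim\ \sum_{k\in I_n}(k+1)^{\sigma-1}\mu_k^p\ \lesssim\ \mu_{2^{n-1}}^p\,2^{n\sigma},
\]
and after summing in $n$ the right-hand side is just the left-hand side shifted by one index, hence comparable. No case split between slow and geometric decay is needed.
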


\begin{proof}

(i) Using (\ref{minfty})  we have
\ba\|D_\alpha F_\mu\|_{(\infty,1,\gamma)}&\approx& \int_0^1 (1-r)^{\gamma-1}
(\sum_{n=0}^\infty (n+1)^\alpha \mu_n r^n)dr\\
&=& 
\sum_{n=0}^\infty (n+1)^\alpha \mu_n (\int_0^1 (1-r)^{\gamma-1}r^ndr)\\
&\approx& 
\sum_{n=0}^\infty (n+1)^{\alpha-\gamma}\mu_n.
\ea
This gives the result.

(ii)  Using (\ref{minfty}) again we have
$$
\|D_\alpha F_\mu\|^q_{(\infty, q, \gamma) }\approx  \int_0^1 (1-r)^{\gamma q-1}(\sum_{k=0}^\infty (k+1)^\alpha \mu_k r^k)^{q} dr.
$$
It is known  (see  \cite[Lemma 2.1]{B4})  that for $\gamma>0$ and $a_k\ge 0$ for all $k$
\beq \label{lpqequiv}
\int_0^1 (1-r)^{q\gamma-1}(\sum_{k=0}^\infty a_k r^k)^{q} dr \approx \|(\frac{a_k}{(k+1)^{\gamma}})\|^q_{(1,q)}
\eeq
and then we have that $\|D_\alpha F_\mu\|_{(\infty, q, \gamma) }\approx \|((k+1)^{\alpha-\gamma}\mu_k)\|_{(1,q)}$.

Now we use that $\mu_n$ is decreasing to observe that 
$$
\mu_{2^{n}} 2^{n(\alpha-\gamma+1)} \lesssim\sum_{k\in I_n} (k+1)^{\alpha-\gamma}\mu_k\lesssim \mu_{2^{n-1}} 2^{n(\alpha-\gamma+1)} 
$$
which gives that $((k+1)^{\alpha-\gamma}\mu_k)\in \ell(1,q)$ is equivalent to
$((k+1)^{\alpha-\gamma+1-1/q}\mu_k)\in \ell^q.$

(iii) (a) $\Longleftrightarrow$ (b)   For $-1<\alpha\le 0$
we use (\ref{new}) and (\ref{lpqequiv}) to write
\begin{align*}
    \|D_\alpha F_\mu\|^q_{(p, q, \gamma) }&\approx  \int_0^1 (1-r)^{\gamma q-1}(\sum_{k=0}^\infty \frac{\mu_k ^p}{(k+1)^{2-(\alpha+1) p}}r^{pk})^{q/p} dr\\
    &\approx \| (\frac{\mu_k ^p}{(k+1)^{2-(\alpha -\gamma +1) p}})\|_{(1,q/p)}^{q/p}\\
    &\approx \| (\frac{\mu_k }{(k+1)^{2/p-(\alpha -\gamma +1) }})\|_{(p,q)}^{q}.
\end{align*}

(b) $\Longleftrightarrow$ (c) Arguing as above 
$$\mu^p_{2^{n}} 2^{n((\alpha+1-\gamma) p-1)} \lesssim\sum_{k\in I_n} (k+1)^{(\alpha+1-\gamma) p-2}\mu^p_k\lesssim \mu^p_{2^{n-1}} 2^{n((\alpha+1-\gamma) p-1)}. $$
Finally notice that $((k+1)^{ (\alpha-\gamma +1) p-2}\mu^p_k)\in \ell(1,q/p)$ is equivalent to
$((k+1)^{(\alpha-\gamma +1) p-1-p/q}\mu^p_k)\in \ell^{q/p}$ or equivalent to
$((k+1)^{\alpha-\gamma+1/p'-1/q}\mu_k)\in \ell^q.$

In the case $\gamma>\alpha>0$ we  use that $D_\alpha F_\mu\in H(p,q,\gamma)$ is equivalent to
$F_\mu\in H(p,q,\gamma-\alpha)$ and apply  the previous case for $\alpha=0$.
\end{proof}

\section{ Preliminaries on Carleson measures}

Recall that  a positive Borel measure  $\nu$  defined on $\D$ is called an $s$-Carleson measure for $s>0$ if
\beq \label{carleson1}
\nu(S(\theta,h))=O(h^s), \quad \theta\in [-\pi,\pi),\, 0<h<1,
\eeq
where $S(\theta, h)=\{z=re^{it}\in \D: 0<1-r<h \, \hspace{2mm} \mbox{and}\hspace{2mm} |t-\theta|<\frac{h}{2}\}$ is the so-called Carleson box.

In the case of measures $\nu$ supported on $[0,1)$ the condition becomes easier.
A positive Borel measure $\mu$ defined on $[0,1)$ is $s$-Carleson whenever there exists $C>0$ such that
\begin{equation}\label{carlesonmu}\mu([r,1))\le C (1-r)^s, \quad r<1.
\end{equation}

An equivalent formulation in terms of the moment $\mu_n=\int_0^1 t^n d\mu(t)$ (see  \cite[Proposition 1]{CGP}) is given by

\begin{equation}\label{carlesonceoel}\mu_n=O(\frac{1}{(n+1)^s}).
\end{equation}

The $s$-Carleson condition can also be characterized as follows:
\begin{prop} Let $\mu$ be a positive Borel measure on $[0,1)$ and $s>0$.
$\mu$ is $s$-Carleson if and only if for any $\gamma>0$
\begin{equation}\label{carlesonpoisson2}\int_0^1 \frac{d\mu(t)}{(1-tr)^{s+\gamma}}=O(\frac{1}{(1-r)^\gamma}).
\end{equation}

\end{prop}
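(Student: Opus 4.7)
The plan is to prove the two implications separately, with the forward direction using a dyadic decomposition of $[0,1)$ and the backward direction following from a one-line pointwise estimate.

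For the forward implication, assume $\mu([r,1))\le C(1-r)^s$. Fix $r\in(0,1)$ and choose $N\in\mathbb{N}$ with $2^{-N}\approx 1-r$. Decompose $[0,1)=\bigcup_{n\ge 0}I_n$ with $I_n=[1-2^{-n},1-2^{-n-1})$, so $\mu(I_n)\le\mu([1-2^{-n},1))\le C\,2^{-ns}$. Using the obvious bound $1-tr\ge\max(1-t,1-r)$, I would split
$$\int_0^1\frac{d\mu(t)}{(1-tr)^{s+\gamma}}=\sum_{n\le N}\int_{I_n}\frac{d\mu(t)}{(1-tr)^{s+\gamma}}+\sum_{n>N}\int_{I_n}\frac{d\mu(t)}{(1-tr)^{s+\gamma}},$$
and bound the summands respectively by $C\,2^{n(s+\gamma)}2^{-ns}=C\,2^{n\gamma}$ and by $(1-r)^{-(s+\gamma)}\mu(I_n)\le C(1-r)^{-(s+\gamma)}2^{-ns}$. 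The first sum is a geometric series with ratio $2^\gamma>1$, so it is controlled by its last term $2^{N\gamma}\approx(1-r)^{-\gamma}$; the second sum is geometric with ratio $2^{-s}<1$, so it is controlled by its first term $(1-r)^{-(s+\gamma)}2^{-Ns}\approx(1-r)^{-\gamma}$. Adding the two estimates gives the claim.

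For the backward implication, note that if $t\in[r,1)$ then $1-tr\le 1-r^2\le 2(1-r)$, so
$$\int_0^1\frac{d\mu(t)}{(1-tr)^{s+\gamma}}\ge\int_{[r,1)}\frac{d\mu(t)}{(1-tr)^{s+\gamma}}\ge\frac{\mu([r,1))}{2^{s+\gamma}(1-r)^{s+\gamma}},$$
so combining with the hypothesis $(1-r)^{-\gamma}$-bound immediately yields $\mu([r,1))\lesssim(1-r)^s$.

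The only mildly delicate point is the bookkeeping in the dyadic splitting: one needs $\gamma>0$ so that the near-boundary sum $\sum_{n\le N}2^{n\gamma}$ is dominated by its top term, and $s>0$ so that the far-from-boundary sum $\sum_{n>N}2^{-ns}$ converges. Both are available by hypothesis, so no real obstacle arises. Observe that the backward direction only uses the integral estimate for a single value of $\gamma>0$, while the forward direction produces it for every $\gamma>0$ simultaneously; this matches the symmetric ``for any $\gamma>0$'' formulation of the statement.
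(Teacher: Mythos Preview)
Your proof is correct. The backward implication is essentially identical to the paper's: both restrict the integral to $[r,1)$ and use $1-tr\lesssim 1-r$ there.

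For the forward implication, your route differs from the paper's. The paper expands the kernel as a power series,
\[
\int_0^1\frac{d\mu(t)}{(1-tr)^{s+\gamma}}\approx\sum_{n=0}^\infty \mu_n(n+1)^{s+\gamma-1}r^n,
\]
and then invokes the equivalent moment characterization $\mu_n=O((n+1)^{-s})$ (stated just before the proposition) together with the standard estimate $\sum_n (n+1)^{\gamma-1}r^n\lesssim (1-r)^{-\gamma}$. Your argument instead works directly on the integral via a dyadic decomposition of $[0,1)$ and the pointwise bound $1-tr\ge\max(1-t,1-r)$, splitting at the scale $2^{-N}\approx 1-r$. Your approach is more self-contained, since it uses only the defining tail condition $\mu([r,1))\le C(1-r)^s$ and does not appeal to the moment equivalence; the paper's approach is shorter precisely because that equivalence has already been recorded. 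Both arguments make transparent why the hypotheses $\gamma>0$ and $s>0$ are needed.
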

\begin{proof} Since 
$$\int_0^1 \frac{d\mu(t)}{(1-tr)^{s+\gamma}}\approx \sum_{n=0}^\infty \mu_n (n+1)^{s+\gamma-1}r^n$$
using (\ref{carlesonceoel}) we obtain
$$\int_0^1 \frac{d\mu(t)}{(1-tr)^{s+\gamma}}\lesssim
\sum_{n=0}^\infty  (n+1)^{\gamma-1}r^n\lesssim \frac{1}{(1-r)^\gamma}.$$
The converse follows using (\ref{carlesonmu}) since

 $$\frac{\mu([r,1))}{(1-r)^{s+\gamma}}\lesssim \int_r^1 \frac{d\mu(t)}{(1-tr)^{s+\gamma}}\lesssim \frac{1}{(1-r)^\gamma}.$$
\end{proof}

Another characterization of $s$-Carleson measures is the following (see \cite{BJ}):
$\mu$ is $s$-Carleson if and only if for any $\gamma>0$
\begin{equation}\label{carlesonpoisson}\int_0^1 \frac{d\mu(t)}{|1-tw|^{s+\gamma}}=O(\frac{1}{(1-|w|)^\gamma}).
\end{equation}

For values $s\ge 1$ we can also use \cite[Theorem 9.4]{D} to get that $\mu$ is $s$-Carleson if and only if  $H^p \subset L^q(\mu)$ for $s=q/p$ with $p\le q$, that is to say \beq (\int_0^1 |f(t)|^qd\mu(t))^{1/q}\lesssim \|f\|_p. \eeq

We would like to describe Carleson conditions in terms of the behaviour of $F_\mu$.

\begin{thm} \label{carlesonfmu}Let $1\le p<\infty$, $\alpha>-1$, $ \gamma, s>0$ and $\mu$ a positive Borel measure on $[0,1)$. The following are equivalent:

\begin{enumerate}[(i)]
    \item $\mu$ is an $s$-Carleson measure.
    \item $D_{\alpha} F_\mu\in H(p,\infty, \gamma )$ for any $\gamma<\alpha+1/p'$ and $s=\alpha+1/p'-\gamma$.

    \item $D_{\alpha}F_\mu\in A^\infty_{\gamma}$ for any $0<\gamma<\alpha+1$ and $s=\alpha+1-\gamma$.
\end{enumerate}
\end{thm}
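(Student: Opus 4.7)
The strategy is to translate each of (ii) and (iii) into a size condition on the moment sequence $(\mu_n)$ and match it against the Carleson coefficient characterization \eqref{carlesonceoel}, namely $\mu_n=O((n+1)^{-s})$.

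The equivalence (i)$\iff$(iii) is immediate from Proposition \ref{firstcase}(ii): $D_\alpha F_\mu\in A^\infty_\gamma$ iff $\mu_n=O((n+1)^{\gamma-\alpha-1})$, and under the theorem's relation $s=\alpha+1-\gamma$ this is precisely $\mu_n=O((n+1)^{-s})$. For (i)$\iff$(ii) I would appeal to Proposition \ref{firstcase}(iv), which gives $D_\alpha F_\mu\in H(p,\infty,\gamma)$ iff $\mu_n=O((n+1)^{\gamma-\alpha-1/p'})$; with $s=\alpha+1/p'-\gamma$ this again coincides with the Carleson coefficient condition.

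The main obstacle is that Proposition \ref{firstcase}(iv) is stated under the extra hypothesis $\alpha<\gamma$, which corresponds to $s<1/p'$ and thus does not literally cover the full range $s\in(0,\alpha+1/p')$ allowed here: in particular, the regime $\gamma\le\alpha$ with $\alpha>0$ is missed. To close this gap I would verify (i)$\iff$(ii) in the remaining regime directly. For the direction (i)$\Rightarrow$(ii), Minkowski applied to $D_\alpha F_\mu(z)=\int_0^1(1-tz)^{-(\alpha+1)}\,d\mu(t)$, combined with $M_p(K_\alpha,u)\approx(1-u)^{-\alpha-1/p'}$ from \eqref{e3} (which applies because $s>0$ forces $\alpha>-1/p'$, hence $\alpha+1>1/p$), yields
$$M_p(D_\alpha F_\mu,r)\lesssim\int_0^1\frac{d\mu(t)}{(1-rt)^{\alpha+1/p'}}\lesssim\frac{1}{(1-r)^\gamma}$$
by the Carleson test \eqref{carlesonpoisson2} applied with exponent $s+\gamma=\alpha+1/p'$.

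The hard part will be the converse in the regime $\alpha\ge\gamma>0$: the Taylor coefficients $c_n\approx(n+1)^\alpha\mu_n$ of $D_\alpha F_\mu$ are no longer monotone, so the block-sum argument behind Proposition \ref{firstcase}(iv) does not apply verbatim. I would circumvent this by relying only on the monotonicity of $(\mu_n)$ itself: starting from $(1-r)^\gamma M_p(D_\alpha F_\mu,r)\le C$ at the radius $r=1-1/n$, extract a lower bound for $\mu_{2n}$ via a Parseval-type estimate on the dyadic block $[n,2n]$, using $\mu_k\ge\mu_{2n}$ for $k$ in this block. For $p=2$ this is clean from the identity $M_2(f,r)^2=\sum c_k^2 r^{2k}$; for general $p\in[1,\infty)$ I expect to lean on a Hardy--Littlewood-type lower bound, or alternatively to pass through Lemma \ref{derivMNS} and rewrite $D_\alpha F_\mu$ in terms of $D_{\alpha_0}F_\mu$ for some $\alpha_0\in(-1,0]$, a range where Proposition \ref{firstcase}(iv) is already available, provided the weight shift keeps us inside the valid regime.
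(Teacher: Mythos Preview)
Your handling of (i)$\Rightarrow$(ii) via Minkowski and \eqref{carlesonpoisson2}, and of (i)$\Leftrightarrow$(iii) via Proposition~\ref{firstcase}(ii) and \eqref{carlesonceoel}, coincides with what the paper does. The problem is only in closing the loop.

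Your direct attack on (ii)$\Rightarrow$(i) in the regime $\alpha\ge\gamma$ does not go through as written. The fallback you suggest, namely using Lemma~\ref{derivMNS} (or Corollary~\ref{coroderiv}) to trade $D_\alpha F_\mu\in H(p,\infty,\gamma)$ for $D_{\alpha_0}F_\mu\in H(p,\infty,\gamma-\alpha+\alpha_0)$ with $-1<\alpha_0\le 0$, runs into exactly the constraint you are trying to avoid: both Lemma~\ref{derivMNS} and Corollary~\ref{coroderiv} require the target weight exponent to be positive, i.e.\ $\gamma-\alpha+\alpha_0>0$, which forces $\alpha_0>\alpha-\gamma\ge 0$ and therefore excludes $\alpha_0\le 0$. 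So the ``weight shift'' you hope for cannot keep you inside the valid regime. The block-sum idea you sketch works cleanly only for $p=2$; for general $p$ it would need a Hardy--Littlewood lower bound that is not available here, since for $\alpha>0$ the Taylor coefficients $(n+1)^\alpha\mu_n$ need not be monotone.

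The paper avoids this difficulty entirely by not proving (ii)$\Rightarrow$(i) directly. Instead it observes that (ii)$\Rightarrow$(iii) is immediate from the elementary inclusion $H(p,\infty,\gamma)\subset H(\infty,\infty,\gamma+1/p)=A^\infty_{\gamma+1/p}$ coming from \eqref{inclu}: if $D_{\alpha'}F_\mu\in H(p,\infty,\gamma)$ with $s=\alpha'+1/p'-\gamma$, then $D_{\alpha'}F_\mu\in A^\infty_{\gamma+1/p}$, and one checks $(\alpha')+1-(\gamma+1/p)=s$, which is exactly the relation in (iii). Since you already have (iii)$\Rightarrow$(i), this closes the cycle with no extra work and no restriction on the range of $s$.
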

\begin{proof} (i)  $\Longrightarrow$ (ii)
Assume $\mu$ is $s$-Carleson and let $\gamma<\alpha+1/p'$  and consider  $s=\alpha+1/p'-\gamma$ . Since  $D_{\alpha} F_\mu(z)= \int_0^1 \frac{d\mu(t)}{(1-tz)^{\alpha+1}}$ we can use Minkowski's inequality and  (\ref{carlesonpoisson2}) to get the estimate
$$M_p(D_{\alpha}  F_\mu,r)\lesssim \int_0^1 \frac{d\mu(t)}{(1-rt)^{\alpha+1/p'}} \approx \int_0^1 \frac{d\mu(t)}{(1-rt)^{s+\gamma}}\lesssim \frac{1}{(1-r)^{\gamma}}.$$
(ii) $\Longrightarrow$ (iii)  Let $0<\gamma<\alpha+1$ and $s=\alpha+1-\gamma$. Consider $\alpha'=\alpha +1/p$. Hence $\gamma<\alpha'+1/p'$ and $s=\alpha'+1/p'-\gamma$.  We can apply (ii) to obtain that
$$D_{\alpha'}F_\mu\in H(p,\infty, \gamma)\subset H(\infty,\infty, \gamma+1/p).$$
This gives $D_{\alpha}F_\mu \in A^\infty_{\gamma}$.

(iii) $\Longrightarrow$ (i)  This follows  combining Proposition \ref{firstcase} with (\ref{carlesonceoel}).
\end{proof}
Let us write the consequence of this result for $s=1$.
\begin{cor} \label{corofmu} $\mu$ is a $1$-Carleson measure if and only if
$D_\alpha F_\mu\in H(p,\infty,\alpha-1/p)$ for some (equivalently for any) $\alpha>1/p$ and $1\le p\le \infty$.
\end{cor}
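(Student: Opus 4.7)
The plan is to derive this corollary as a direct specialization of Theorem \ref{carlesonfmu} to the case $s=1$, handling the ranges $1 \le p < \infty$ and $p=\infty$ separately since the statement lumps them together.

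First I would treat $1 \le p < \infty$. The equivalence (i)$\Leftrightarrow$(ii) of Theorem \ref{carlesonfmu} says $\mu$ is $s$-Carleson iff $D_\alpha F_\mu \in H(p,\infty,\gamma)$ for some (equivalently any) $\gamma$ with $0<\gamma<\alpha+1/p'$ and $s=\alpha+1/p'-\gamma$. Setting $s=1$ forces $\gamma = \alpha+1/p'-1 = \alpha-1/p$. The upper constraint $\gamma<\alpha+1/p'$ becomes $-1/p < 1/p'$, which holds trivially. The positivity constraint $\gamma>0$ translates to $\alpha>1/p$, which is exactly the hypothesis in the corollary. So (i)$\Leftrightarrow$(ii) with $s=1$ is precisely the corollary in the range $1\le p<\infty$.

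Second, for $p=\infty$ we use (i)$\Leftrightarrow$(iii). With $s=1$ we get $\gamma = \alpha+1-1=\alpha$, under the constraint $0<\gamma<\alpha+1$, i.e.\ $\alpha>0=1/p$. Since $H(\infty,\infty,\alpha)=A^\infty_\alpha$, the condition $D_\alpha F_\mu \in H(p,\infty,\alpha-1/p)$ with $p=\infty$ is exactly $D_\alpha F_\mu\in A^\infty_\alpha$, so this case also fits into the unified statement of the corollary.

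Finally, the ``for some'' versus ``for any'' clarification is inherited from Theorem \ref{carlesonfmu}: both directions of that theorem are formulated uniformly in the admissible $\gamma$ (equivalently, in the admissible $\alpha$), so varying $\alpha>1/p$ in the corollary does not affect whether $\mu$ is $1$-Carleson. Thus no real obstacle arises; the only thing to double-check is that the boundary cases of $\alpha$ and $p$ (particularly $\alpha=1/p$ and $p=\infty$) align correctly with the strict-versus-non-strict inequalities in Theorem \ref{carlesonfmu}, which they do.
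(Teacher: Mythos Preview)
Your proposal is correct and matches the paper's approach exactly: the corollary is stated in the paper without proof, immediately after Theorem \ref{carlesonfmu}, as the specialization to $s=1$. Your explicit separation of the cases $1\le p<\infty$ and $p=\infty$ and verification of the parameter constraints is precisely the routine check needed.
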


\section{ Boundedness of \texorpdfstring{$\mathcal C_{\mu,\beta}$}{Cmubeta}  on mixed norm spaces}

Recall that the definition of the operator is given by
$$\mathcal{C}_{\mu,\beta}f(z)=\int_0^1 \frac{f(tz)}{(1-tz)^\beta} d\mu(t)$$
where $f\in  \mathcal H(\D)$, or in terms of Taylor coefficients
$$\mathcal{C}_{\mu,\beta}f(z)=\sum_{n=0}^\infty \mu_n (\sum_{k=0}^n \frac{\Gamma(n-k+\beta)}{(n-k)!\Gamma(\beta)} a_k)z^n$$
where $f(z)=\sum_{n=0}^\infty a_n z^n$ and $\mu_n=\int_0^1 t^n d\mu(t).$

It is clear from the definition that
$$\mathcal C_{\mu,\beta_1+\beta_2}(f)= \mathcal C_{\mu,\beta_2}(fK_{\beta_1-1}).$$

Our main tool in this section is the following formula
\beq \label{eq0} \mathcal C_{\mu,\beta}= F_\mu* fK_{\beta-1}
\eeq
which follows since $fK_{\beta-1}(z)=\sum_{n=0}^\infty (\sum_{k+j=n} \frac{\Gamma(k+\beta)}{k!\Gamma(\beta)}a_j)z^n$.

To study the boundedness of such an operator acting on mixed norm spaces for different values of  $\gamma$ we shall use the following result.

\begin{lem} If $\beta >0$ then we have that
$$  [D, \mathcal C_{\mu,\beta}]=\beta (\mathcal C_{\mu,\beta+1}-  \mathcal C_{\mu,\beta}).$$
Equivalently
\beq \label{mix}D\mathcal C_{\mu,\beta}f= \mathcal C_{\mu,\beta}(Df)+\beta \mathcal C_{\mu,\beta+1}(Sf) \eeq
where $Sf(z)=zf(z)$.
\end{lem}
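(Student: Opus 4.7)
The plan is a direct calculation by differentiation under the integral sign. Writing $Df(z) = f(z) + zf'(z)$, I would start from the integral representation
$$
\mathcal{C}_{\mu,\beta}f(z) = \int_0^1 \frac{f(tz)}{(1-tz)^{\beta}}\, d\mu(t),
$$
which converges and can be differentiated under the integral sign on compact subsets of $\D$ because the integrand, together with its $z$-derivative, is continuous in $(t,z)\in[0,1)\times K$ for any compact $K\subset\D$ (so this step is not an obstacle). Applying the product rule to the integrand gives
$$
\frac{\partial}{\partial z}\frac{f(tz)}{(1-tz)^{\beta}} = \frac{t f'(tz)}{(1-tz)^{\beta}} + \frac{\beta t f(tz)}{(1-tz)^{\beta+1}}.
$$

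Multiplying by $z$ and adding $\mathcal{C}_{\mu,\beta}f(z)$, I get
$$
D\mathcal{C}_{\mu,\beta}f(z) = \int_0^1 \frac{f(tz) + tz f'(tz)}{(1-tz)^{\beta}}\, d\mu(t) + \beta \int_0^1 \frac{tz\, f(tz)}{(1-tz)^{\beta+1}}\, d\mu(t).
$$
Now I would recognize the two pieces: since $(Df)(tz) = f(tz) + tz f'(tz)$, the first integral is exactly $\mathcal{C}_{\mu,\beta}(Df)(z)$; and since $(Sf)(tz) = tz f(tz)$, the second integral is $\beta\, \mathcal{C}_{\mu,\beta+1}(Sf)(z)$. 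This yields (\ref{mix}) at once.

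To pass to the commutator formulation, I would use the algebraic identity
$$
\frac{tz}{(1-tz)^{\beta+1}} = \frac{1 - (1-tz)}{(1-tz)^{\beta+1}} = \frac{1}{(1-tz)^{\beta+1}} - \frac{1}{(1-tz)^{\beta}},
$$
which, after multiplying by $f(tz)$ and integrating against $d\mu(t)$, gives $\mathcal{C}_{\mu,\beta+1}(Sf) = \mathcal{C}_{\mu,\beta+1}(f) - \mathcal{C}_{\mu,\beta}(f)$. Substituting into (\ref{mix}) then produces $[D,\mathcal{C}_{\mu,\beta}]f = \beta(\mathcal{C}_{\mu,\beta+1} - \mathcal{C}_{\mu,\beta})f$. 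The argument is essentially mechanical, so the only real care point is the interchange of $D$ with the integral, which is justified by uniform convergence on compact subsets of $\D$; an alternative route, useful as a sanity check, is to verify the identity on the Taylor level by comparing the $n$-th coefficients of both sides using the recursion $(n+\beta)\frac{\Gamma(n+\beta)}{n!\Gamma(\beta)} = (n+1)\frac{\Gamma(n+\beta)}{n!\Gamma(\beta)} + (\beta-1)\frac{\Gamma(n+\beta)}{n!\Gamma(\beta)}$, which is the same as the identity used in Lemma \ref{comp}.
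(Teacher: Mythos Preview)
Your proof is correct. It proceeds in the reverse order from the paper: you first establish \eqref{mix} by differentiating the integral representation under the integral sign, then deduce the commutator identity via the algebraic observation $\mathcal C_{\mu,\beta+1}(Sf)=\mathcal C_{\mu,\beta+1}f-\mathcal C_{\mu,\beta}f$. The paper instead proves the commutator identity first, working at the level of Taylor coefficients through the Gamma-function identity
\[
\frac{\Gamma(n-k+\beta+1)}{(n-k)!\,\Gamma(\beta+1)}=\frac{1}{\beta}\Bigl((n+\beta+1)\frac{\Gamma(n-k+\beta)}{(n-k)!\Gamma(\beta)}-(k+1)\frac{\Gamma(n-k+\beta)}{(n-k)!\Gamma(\beta)}\Bigr),
\]
and only then passes to \eqref{mix} using the same integral identity you used. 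Both routes are elementary and equally short; your integral-representation argument avoids the coefficient bookkeeping but requires the (routine) justification of differentiating under the integral, while the paper's coefficient argument is purely algebraic. The ``sanity check'' you mention at the end is essentially the paper's primary argument.
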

\begin{proof}
    The proof of the first identity follows from 
    \[
    \frac{\Gamma(n-k+\beta+1)}{(n-k)! \Gamma(\beta+1)}= \frac{1}{\beta} \left(  (n+\beta+1)\frac{ \Gamma(n-k+\beta)}{(n-k)!\Gamma(\beta)}-\frac{ \Gamma(n-k+\beta)}{(n-k)!\Gamma(\beta)} (k+1)\right)
    \]
    and the expression in terms of coefficients for the operators.
    To obtain (\ref{mix}) we just observe that
    $$ \mathcal C_{\mu,\beta+1}f(z)-  \mathcal C_{\mu,\beta}f(z)=\int_0^1 \frac{tzf(tz)}{(1-tz)^{\beta+1}}d\mu(t)=\mathcal C_{\mu,\beta+1}(Sf)(z).$$
\end{proof}

\begin{prop} \label{bajando}  Assume that   $\mathcal C_{\mu,\beta}:H(p_1,q_1,\gamma_1) \to H(p_2,q_2,\gamma_2)$  is bounded for some $\gamma_1,\gamma_2>0$ and $0<p_1,p_2,q_1,q_2\le \infty$. Then  
    
 \begin{enumerate}[(i)]
     \item  $\mathcal C_{\mu,\beta+\delta}:H(p_1,q_1,\gamma_1-\delta) \to H(p_2,q_2,\gamma_2)$ is bounded for any $\gamma_1>\delta>0$.

     \item   $\mathcal C_{\mu,\beta}:H(p_1,q_1,\gamma_1-1) \to H(p_2,q_2,\gamma_2-1)$ is bounded whenever $\gamma_1, \gamma_2>1$.
 \end{enumerate}  
\end{prop}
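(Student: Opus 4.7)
The key is the composition identity stated just before the proposition, namely $\mathcal{C}_{\mu,\beta+\delta}(f) = \mathcal{C}_{\mu,\beta}(f K_{\delta-1})$. My plan is to show that multiplication by $K_{\delta-1}$ moves $f$ from $H(p_1,q_1,\gamma_1-\delta)$ into $H(p_1,q_1,\gamma_1)$, at which point the hypothesis on $\mathcal{C}_{\mu,\beta}$ finishes the job. Concretely, by (\ref{hpinftyK}) of Lemma \ref{fact} applied with $p=\infty$, we have $K_{\delta-1}\in H(\infty,\infty,\delta)$. Then Lemma \ref{lemaprod} with $(p_2,q_2)=(\infty,\infty)$ yields
\[
\|fK_{\delta-1}\|_{(p_1,q_1,\gamma_1)}\lesssim \|f\|_{(p_1,q_1,\gamma_1-\delta)}\|K_{\delta-1}\|_{(\infty,\infty,\delta)},
\]
and the boundedness of $\mathcal{C}_{\mu,\beta}$ from $H(p_1,q_1,\gamma_1)$ to $H(p_2,q_2,\gamma_2)$ closes the chain.

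\textbf{Plan for (ii).} Here I would use Lemma \ref{derivMNS} to transfer the question about $\mathcal{C}_{\mu,\beta}f\in H(p_2,q_2,\gamma_2-1)$ (for $f\in H(p_1,q_1,\gamma_1-1)$) to the equivalent statement $D\mathcal{C}_{\mu,\beta}f\in H(p_2,q_2,\gamma_2)$; this is legal since $\gamma_2>1>0$. Then I invoke the commutator formula (\ref{mix}):
\[
D\mathcal{C}_{\mu,\beta}(f)= \mathcal{C}_{\mu,\beta}(Df)+\beta\,\mathcal{C}_{\mu,\beta+1}(Sf),
\]
and estimate each summand separately. For the first, Lemma \ref{derivMNS} gives $Df\in H(p_1,q_1,\gamma_1)$, so the hypothesis yields $\mathcal{C}_{\mu,\beta}(Df)\in H(p_2,q_2,\gamma_2)$. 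For the second, note $M_p(Sf,r)=rM_p(f,r)\le M_p(f,r)$ so trivially $Sf\in H(p_1,q_1,\gamma_1-1)$; now apply part (i) with $\delta=1$ (permissible since $\gamma_1>1$), which gives boundedness of $\mathcal{C}_{\mu,\beta+1}$ from $H(p_1,q_1,\gamma_1-1)$ into $H(p_2,q_2,\gamma_2)$ and hence $\mathcal{C}_{\mu,\beta+1}(Sf)\in H(p_2,q_2,\gamma_2)$. Summing the two contributions and converting back via Lemma \ref{derivMNS} delivers the conclusion.

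\textbf{Main obstacle.} There is no real obstacle once the commutator identity (\ref{mix}) is in hand; the content of the proof is organizational. The one thing that deserves a careful sentence is the applicability of Lemma \ref{derivMNS} in both directions (so that $\gamma_1>1$ and $\gamma_2>1$ are used in exactly the right places), and the observation that part (ii) truly depends on part (i) through the $\mathcal{C}_{\mu,\beta+1}$ term, so the argument must be presented in the order (i) then (ii). If one wanted to avoid the commutator formula, one could attempt a direct computation starting from the integral representation $\mathcal{C}_{\mu,\beta}(f)(z)=\int_0^1 f(tz)(1-tz)^{-\beta}d\mu(t)$ and integrating by parts, but this seems strictly more technical than the clean two-term decomposition above.
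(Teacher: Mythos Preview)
Your proof is correct and follows essentially the same route as the paper: for (i) you use the identity $\mathcal{C}_{\mu,\beta+\delta}(f)=\mathcal{C}_{\mu,\beta}(fK_{\delta-1})$ together with $K_{\delta-1}\in H(\infty,\infty,\delta)$, and for (ii) you reduce via Lemma~\ref{derivMNS} to controlling $D\mathcal{C}_{\mu,\beta}f$, split it using (\ref{mix}), and handle the two pieces with the hypothesis and with part (i) at $\delta=1$. The paper's argument is identical in structure, only more terse.
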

\begin{proof} (i) Let $\gamma_1>\delta>0$ and $f\in H(p_1,q_1,\gamma_1-\delta)$. Since  $fK_{\delta-1}\in H(p_1,q_1,\gamma_1)$ then $\mathcal C_{\mu,\beta+\delta}(f)=\mathcal C_{\mu,\beta}(fK_{\delta-1})\in H(p_2,q_2,\gamma_2)$.

(ii) Let $f\in H(p_1,q_1,\gamma_1-1)$. We shall show that $D\mathcal C_{\mu,\beta} f\in H(p_2,q_2, \gamma_2)$.

Note that $Df\in H(p_1,q_1,\gamma_1)$ and $Sf\in H(p_1,q_1,\gamma_1-1)$. Therefore $\mathcal C_{\mu,\beta} Df\in H(p_2,q_2,\gamma_2)$ and using (i) also 
$\mathcal C_{\mu,\beta+1} (Sf)\in H(p_2,q_2, \gamma_2)$. Now the result follows from (\ref{mix}).

\end{proof}

We would like to find conditions on $\mu$ and $\beta$ to obtain   $\mathcal C_{\mu,\beta}\Big(H(p_1,q_1,\gamma_1)\Big)\subset H(p_2,q_2,\gamma_2)$ for different values of the parameters.
We start with the following general result.

\begin{lem} Let $1\le p_1,p_2\le \infty$, $0< q_1\le q_2\le \infty$ and $\beta >1/p_1'$,  Then 
$\mathcal C_{\mu,\beta} :H(p_1,q_1,\gamma_1) \to H(p_2, q_2, \gamma_2)$
is bounded for any positive Borel measure $\mu$ whenever $\gamma_2\ge \beta+ \gamma_1+1/p_1-1/p_2$. 
\end{lem}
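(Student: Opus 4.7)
The plan is to reduce to the case $p_1\ge p_2$ via a mixed-norm embedding and then bound $M_{p_2}(\mathcal C_{\mu,\beta}f,r)$ directly from the integral representation using Minkowski and H\"older. If $p_1<p_2$, the inclusion (\ref{inclu}) gives $H(p_1,q_1,\gamma_1)\subset H(p_2,q_1,\gamma_1+1/p_1-1/p_2)$, and the hypothesis $\gamma_2\ge \beta+\gamma_1+1/p_1-1/p_2$ becomes exactly the analogous condition for the reduced problem. The reduced case $p_1=p_2$ will only need $\beta>0$, which is satisfied. Hence I may assume throughout that $p_1\ge p_2$.

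Starting from $\mathcal C_{\mu,\beta}f(z)=\int_0^1 f(tz)K_{\beta-1}(tz)\,d\mu(t)$, Minkowski's integral inequality in $L^{p_2}(\mathbb T)$ yields $M_{p_2}(\mathcal C_{\mu,\beta}f,r)\le \int_0^1 M_{p_2}(fK_{\beta-1},tr)\,d\mu(t)$. Setting $1/p_K:=1/p_2-1/p_1\in[0,1]$, H\"older's inequality gives $M_{p_2}(fK_{\beta-1},tr)\le M_{p_1}(f,tr)\,M_{p_K}(K_{\beta-1},tr)$. Since $\beta>1/p_1'\ge 1/p_K$ (the latter using $p_2\ge 1$), Lemma \ref{fact} provides the power estimate $M_{p_K}(K_{\beta-1},tr)\lesssim(1-tr)^{-\delta}$ with $\delta:=\beta+1/p_1-1/p_2>0$. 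Using $(1-r)\le(1-tr)$ together with $\delta\ge 0$, the monotonicity $M_{p_1}(f,tr)\le M_{p_1}(f,r)$, and the finiteness of $\mu$, multiplication by $(1-r)^{\gamma_2}$ gives the pointwise bound
\[
(1-r)^{\gamma_2}M_{p_2}(\mathcal C_{\mu,\beta}f,r)\lesssim (1-r)^{\gamma_2-\delta}M_{p_1}(f,r)\le (1-r)^{\gamma_1}M_{p_1}(f,r),
\]
where the last step uses the hypothesis $\gamma_2-\delta\ge \gamma_1$ together with $1-r\le 1$. Taking the $L^{q_2}(dr/(1-r))$-norm (or supremum when $q_2=\infty$) yields $\|\mathcal C_{\mu,\beta}f\|_{(p_2,q_2,\gamma_2)}\lesssim \|f\|_{(p_1,q_2,\gamma_1)}$, and then (\ref{incluq}) with $q_1\le q_2$ gives $\|f\|_{(p_1,q_2,\gamma_1)}\lesssim\|f\|_{(p_1,q_1,\gamma_1)}$.

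The delicate point is verifying that the H\"older exponent $p_K$ falls in the valid range $[1,\infty]$ and that Lemma \ref{fact}'s power bound applies at the boundary values. When $p_1=p_2$ one has $p_K=\infty$ and $M_\infty(K_{\beta-1},tr)=(1-tr)^{-\beta}$, so the argument needs only $\beta>0$; when $p_2=1$ and $p_1=\infty$ one has $p_K=1$ and the hypothesis $\beta>1/p_1'=1$ is precisely what gives the power bound on $M_1(K_{\beta-1},tr)$ rather than a logarithmic one. Everything else in the proof is routine manipulation of weights.
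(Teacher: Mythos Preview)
Your proof is correct and proceeds by a genuinely different route from the paper's. The paper works through the Hadamard-product identity $\mathcal C_{\mu,\beta}f=F_\mu*fK_{\beta-1}$: it first uses Lemma~\ref{lemaprod} with $K_{\beta-1}\in H(p_1',\infty,\beta-1/p_1')$ to place $fK_{\beta-1}$ in $H(1,q_1,\gamma_1+\beta-1/p_1')$, then convolves with $DF_\mu\in H(1,\infty,1)$ via Lemma~\ref{lemacon}, drops back down via Lemma~\ref{derivMNS}, and finishes with the inclusions~(\ref{inclu}) and~(\ref{incluq}). You instead work directly with the integral representation, applying Minkowski in $L^{p_2}$, then H\"older to split $M_{p_2}(fK_{\beta-1},tr)$, and finally the pointwise bound $(1-tr)^{-\delta}\le (1-r)^{-\delta}$ together with monotonicity of $M_{p_1}(f,\cdot)$ and finiteness of $\mu$. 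Your argument is more elementary and self-contained (it bypasses the Hadamard framework, Lemma~\ref{lemacon}, and Lemma~\ref{derivMNS} entirely), while the paper's proof is tailored to the article's overarching theme of viewing $\mathcal C_{\mu,\beta}$ as a Hadamard multiplier. Both arguments ultimately lean on the same power estimate for $M_p(K_{\beta-1},r)$ from Lemma~\ref{fact}, and both need $\beta>1/p_1'$ at exactly the same spot (to control the $H^{p_1'}$-means of $K_{\beta-1}$ in the paper, or the $H^{p_K}$-means in your version when $p_2=1$).
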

\begin{proof}  Let $f\in H(p_1,q_1,\gamma_1)$. We use that  $K_{\beta-1}\in H(p'_1,\infty,\beta-1/p_1')$ and then invoking Lemma \ref{lemaprod}  we have that $fK_{\beta-1}\in H(1,q_1, \gamma_1+\beta-1/p_1')$.
On the other hand, $DF_\mu\in H(1,\infty,1)$ for any measure $\mu$. This gives, by Lemma \ref{lemacon}, that $DF_\mu * fK_{\beta-1}\in H(1,q_1,\gamma_1+\beta-1/p_1'+1)$.
Hence,
$$
\mathcal C_{\mu,\beta}f=F_\mu * fK_{\beta-1}\in H(1,q_1,\gamma_1+\beta-1/p_1').
$$
The result now follows trivially from the inclusions
$$H(1,q_1,\gamma_1+\beta-1/p_1')\subset H(p_2,q_1,\gamma_1+\beta+1/p_1-1/p_2)\subset H(p_2,q_2,\gamma_2).$$
\end{proof}

The range of the parameters in the previous result can be improved using the following result
\beq \label{eq1} \mathcal C_{\mu,\beta}= D_{\beta} F_\mu* \mathcal C^{\beta-1}f
\eeq
which follows since
$\mathcal C^{\beta-1}f=  I_\beta(fK_{\beta-1})$
or equivalently 
\beq \label{f0}
D_\beta(\mathcal C^{\beta-1} f)= fK_{\beta-1}.
\eeq

\begin{lem} \label{Ialpha}
    Let $\beta>0$. Then
    $$I_\beta (\mathcal C_{\mu,\beta}f)(z)=\int_0^1 \mathcal C^{\beta-1}f(tz)d\mu(t).$$
\end{lem}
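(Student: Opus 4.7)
The plan is to compute $I_\beta(\mathcal C_{\mu,\beta}f)(z)$ directly from the integral representations, using Fubini to interchange the order of integration, and then recognize the inner integral as Andersen's weighted Ces\`aro operator evaluated at $tz$.

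First I would recall that, for $\beta>0$, the fractional integral admits the representation
$$I_\beta g(z)=\beta\int_0^1 (1-u)^{\beta-1} g(uz)\,du,$$
and apply this to $g=\mathcal C_{\mu,\beta}f$. Substituting the defining integral
$$\mathcal C_{\mu,\beta}f(uz)=\int_0^1 \frac{f(tuz)}{(1-tuz)^{\beta}}\,d\mu(t),$$
I arrive at the double integral
$$I_\beta(\mathcal C_{\mu,\beta}f)(z)=\beta\int_0^1(1-u)^{\beta-1}\left(\int_0^1\frac{f(tuz)}{(1-tuz)^{\beta}}\,d\mu(t)\right)du.$$

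Next I would justify interchanging the order of integration. Since $\mu$ is finite and $f$ is analytic on $\mathbb{D}$, for fixed $z\in\mathbb{D}$ the integrand is jointly continuous and uniformly bounded on $[0,1]\times[0,1)$ (we have $|tuz|\le |z|<1$ for all $t,u\in[0,1]$), so Fubini applies and gives
$$I_\beta(\mathcal C_{\mu,\beta}f)(z)=\int_0^1\left(\beta\int_0^1(1-u)^{\beta-1}\frac{f(u\cdot tz)}{(1-u\cdot tz)^{\beta}}\,du\right)d\mu(t).$$

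Finally I would recognize that the inner parenthesis is exactly $\mathcal C^{\beta-1}f(tz)$ by the definition of Andersen's operator recalled in the introduction, namely
$$\mathcal C^{\beta-1}f(w)=\beta\int_0^1\frac{f(uw)(1-u)^{\beta-1}}{(1-uw)^{\beta}}\,du,$$
applied with $w=tz$. This yields the claimed identity
$$I_\beta(\mathcal C_{\mu,\beta}f)(z)=\int_0^1 \mathcal C^{\beta-1}f(tz)\,d\mu(t).$$

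There is essentially no hard step here; the only point requiring a line of justification is Fubini's theorem, which is immediate from the uniform bound on compact subsets of $\mathbb{D}$. Alternatively, one could obtain the same identity by applying $I_\beta$ to the Hadamard factorization $\mathcal C_{\mu,\beta}f=D_\beta F_\mu * \mathcal C^{\beta-1}f$ of (\ref{eq1}), using $I_\beta D_\beta=\mathrm{id}$ to reduce to $F_\mu *\mathcal C^{\beta-1}f$, and then invoking the general formula $F_\mu * g(z)=\int_0^1 g(tz)\,d\mu(t)$ which follows at once from term-by-term comparison of Taylor coefficients.
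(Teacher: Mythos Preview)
Your proof is correct and follows essentially the same approach as the paper: write $I_\beta$ via its integral representation, substitute the integral definition of $\mathcal C_{\mu,\beta}f$, apply Fubini, and recognize the inner integral as $\mathcal C^{\beta-1}f(tz)$. The paper's proof is exactly this computation (with the variable named $s$ instead of $u$), though it does not spell out the Fubini justification you added.
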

\begin{proof}
    Notice that
    \begin{eqnarray*}
    I_\beta (\mathcal C_{\mu,\beta}f(z))&=&\beta\int_0^1 (1-s)^{\beta-1}(\int_0^1 \frac{f(stz)}{(1-stz)^\beta} d\mu(t)) ds\\
    &=&\beta\int_0^1 (\int_0^1 \frac{f(stz)}{(1-stz)^\beta }(1-s)^{\beta-1}ds ) d\mu(t)\\
    &=& \int_0^1 \mathcal C^{\beta-1} f(zt) d\mu(t).
    \end{eqnarray*}
\end{proof}

The weighted Ces\`aro operator $\mathcal C^{\beta-1}$ is  known to be bounded on $H(p,q,\gamma)$  for any $0<p,q<\infty$ (see \cite{A}). Here we give some improvement of such a result for $p\ge 1$ based on our approach using Hadamard multipliers.

\begin{thm} \label{cbeta} Let  $1\le p_2\le p_1 \le \infty$ and $\min\{\gamma_1,\beta\}>1/p_2-1/p_1$. Then
$$\mathcal C^{\beta-1}:H(p_1,q_1,\gamma_1)\to H(p_2,q_2, \gamma_2)
$$
is bounded for  $0< q_1\le q_2<\infty$ and $\gamma_2\ge \gamma_1+ 1/p_1-1/p_2. $

In particular
$\mathcal C^{\beta-1}$ maps $H(p,q,\gamma)$ into itself for  $\beta>0$ and it maps $H(p,q,\gamma)$ into $H(1,q, \gamma-1/p')$ for any $\min\{\beta,\gamma\}>1/ p'$.

\end{thm}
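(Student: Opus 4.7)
The approach is to factor the operator as
$$
\mathcal{C}^{\beta-1} f = I_\beta(f K_{\beta-1}),
$$
which is equivalent to (\ref{f0}) after applying $I_\beta$. This decomposes $\mathcal{C}^{\beta-1}$ into pointwise multiplication by $K_{\beta-1}$ followed by fractional integration $I_\beta$; both ingredients are controlled by earlier lemmas. By the inclusions (\ref{inclugamma}) and (\ref{incluq}), it suffices to treat the endpoint case $q_2 = q_1$ and $\gamma_2 = \gamma_1 + 1/p_1 - 1/p_2$, noting that the hypothesis $\gamma_1 > 1/p_2 - 1/p_1$ forces $\gamma_2 > 0$.

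Setting $1/p_0 = 1/p_2 - 1/p_1 \ge 0$, the hypothesis $\beta > 1/p_0$ together with Lemma \ref{fact} gives
$$
K_{\beta-1} \in H(p_0, \infty, \beta - 1/p_0),
$$
interpreted as $K_{\beta-1} \in H(\infty, \infty, \beta)$ in the boundary case $p_1 = p_2$. Feeding this and $f \in H(p_1, q_1, \gamma_1)$ into Lemma \ref{lemaprod} yields
$$
f K_{\beta-1} \in H(p_2, q_1, \gamma_1 + \beta - 1/p_0) = H(p_2, q_1, \gamma_2 + \beta).
$$
Lemma \ref{derivMNS}, applicable because $p_2 \ge 1$ and $\gamma_2, \beta > 0$, then converts this into $I_\beta(f K_{\beta-1}) = \mathcal{C}^{\beta-1} f \in H(p_2, q_1, \gamma_2)$, and a final use of (\ref{incluq}) produces the full range $q_2 \ge q_1$.

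The two particular cases follow as immediate specializations: $p_1 = p_2 = p$ gives $p_0 = \infty$ with the positivity condition trivializing to $\beta > 0$, while $p_1 = p$, $p_2 = 1$ gives $1/p_0 = 1/p'$, which matches the stated hypothesis $\min\{\beta, \gamma\} > 1/p'$ and target weight $\gamma - 1/p'$. The only delicate point of this plan is the bookkeeping of strict positivity of all $\gamma$-indices encountered along the way; the hypothesis $\min\{\gamma_1, \beta\} > 1/p_2 - 1/p_1$ is calibrated precisely so that both $\gamma_2 > 0$ (required by Lemma \ref{derivMNS}) and $\beta - 1/p_0 > 0$ (required so that $K_{\beta-1}$ lies in a mixed norm space with legitimate positive weight in Lemma \ref{fact}) hold simultaneously, and there is nothing to prove once these thresholds are met.
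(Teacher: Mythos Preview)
Your proof is correct and follows essentially the same route as the paper: both use the identity $D_\beta(\mathcal C^{\beta-1}f)=fK_{\beta-1}$ from (\ref{f0}), place $K_{\beta-1}\in H(p_3,\infty,\beta-1/p_3)$ via (\ref{hpinftyK}) (your $p_0$ is the paper's $p_3$), apply Lemma~\ref{lemaprod} to land $fK_{\beta-1}$ in $H(p_2,q_1,\beta+\gamma_2)$, and then invoke Lemma~\ref{derivMNS} together with the inclusions (\ref{inclugamma}), (\ref{incluq}). Your explicit bookkeeping of the positivity constraints and the boundary case $p_1=p_2$ is slightly more detailed than the paper's, but there is no substantive difference in strategy.
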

\begin{proof} 
Using (\ref{f0}),  we know that $D_\beta \mathcal C^{\beta-1}f= f K_{\beta-1}$.  From Lemma \ref{derivMNS} and  the inclusions between the mixed norm spaces, it suffices to show that $f K_{\beta-1}\in H(p_2,q_1,\beta+\gamma_1+1/p_1-1/p_2)$ for any $f\in H(p_1,q_1,\gamma_1)$.  Let $1\le p_3\le \infty$ such that $1/p_2=1/p_1+1/p_3$ and recall that $K_{\beta-1}\in H(p_3, \infty, \beta-1/p_3)$ whenever $\beta>1/p_3$ as shown in (\ref{hpinftyK}). The conclusion then follows by Lemma \ref{lemaprod}.

Choosing $p_1=p_2=p$ and $p_2=1$, $q_1=q_2=q$ and $\gamma_1=\gamma$ we obtain the particular cases.
\end{proof}

\begin{thm} Let $\mu$ be a positive Borel measure defined on $[0,1)$,   $1\le p_2\le p_1 \le \infty$ and $\min\{\gamma_1,\beta\}>1/p_2-1/p_1$. Then
$$\mathcal C_{\mu,\beta}:H(p_1,q_1,\gamma_1)\to H(p_2,q_2, \gamma_2)
$$
is bounded for  $1\le q_1\le q_2<\infty$ and $\gamma_2\ge \beta+\gamma_1+ 1/p_1-1/p_2. $
\end{thm}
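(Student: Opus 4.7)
The plan is to combine the factorization (\ref{eq1}), namely $\mathcal C_{\mu,\beta} f = D_\beta F_\mu * \mathcal C^{\beta-1} f$, with Theorem \ref{cbeta} for the weighted Ces\`aro piece, Lemma \ref{derivfmu} for the convolution kernel $D_\beta F_\mu$, and Young's convolution inequality in mixed norm spaces (Lemma \ref{lemacon}) to assemble the two ingredients. The embedding (\ref{inclugamma}) will take care of the range $\gamma_2 \ge \beta + \gamma_1 + 1/p_1 - 1/p_2$.

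First I would apply Theorem \ref{cbeta}: under the standing hypotheses $1\le p_2\le p_1\le\infty$, $\min\{\gamma_1,\beta\} > 1/p_2 - 1/p_1$, and $1\le q_1\le q_2 < \infty$, the weighted Ces\`aro operator $\mathcal C^{\beta-1}$ maps $H(p_1,q_1,\gamma_1)$ boundedly into $H(p_2,q_2,\gamma_1 + 1/p_1 - 1/p_2)$. Next, Lemma \ref{derivfmu} applied with $p=1$ (so $1/p'=0$) yields $D_\beta F_\mu \in H(1,\infty,\beta)$ for any positive Borel measure $\mu$ on $[0,1)$, since $\beta > 0$.

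Now I would invoke Lemma \ref{lemacon} with the pair $(p_1',q_1',\gamma_1') = (1,\infty,\beta)$ for $D_\beta F_\mu$ and $(p_2',q_2',\gamma_2') = (p_2,q_2,\gamma_1+1/p_1-1/p_2)$ for $\mathcal C^{\beta-1} f$. The admissibility condition $1/1 + 1/p_2 \ge 1$ holds trivially; the output parameters are $1/p_3 = 1 + 1/p_2 - 1 = 1/p_2$, $1/q_3 = 0 + 1/q_2 = 1/q_2$, and $\gamma_3 = \beta + \gamma_1 + 1/p_1 - 1/p_2$. Thus by (\ref{eq1}),
\[
\mathcal C_{\mu,\beta} f = D_\beta F_\mu * \mathcal C^{\beta-1} f \in H\bigl(p_2,q_2,\beta+\gamma_1+1/p_1-1/p_2\bigr),
\]
with norm bounded by a constant times $\|D_\beta F_\mu\|_{(1,\infty,\beta)}\|\mathcal C^{\beta-1} f\|_{(p_2,q_2,\gamma_1+1/p_1-1/p_2)} \lesssim \|f\|_{(p_1,q_1,\gamma_1)}$. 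Finally, for any $\gamma_2 \ge \beta + \gamma_1 + 1/p_1 - 1/p_2$ the inclusion (\ref{inclugamma}) gives $H(p_2,q_2,\beta+\gamma_1+1/p_1-1/p_2) \subset H(p_2,q_2,\gamma_2)$, completing the argument.

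No real obstacle is anticipated: the only subtle step is matching hypotheses across the two auxiliary lemmas, in particular verifying that $D_\beta F_\mu$ sits in the endpoint space $H(1,\infty,\beta)$ so that Young's inequality preserves the indices $(p_2,q_2)$ coming from Theorem \ref{cbeta} while adding exactly $\beta$ to the weight exponent.
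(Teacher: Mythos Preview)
Your proof is correct. Both your argument and the paper's rest on the same factorization through $\mathcal C^{\beta-1}$ and on Theorem \ref{cbeta}; the only difference is in how the $\mu$-piece is handled. The paper uses Lemma \ref{Ialpha} to write $I_\beta(\mathcal C_{\mu,\beta}f)(z)=\int_0^1 \mathcal C^{\beta-1}f(tz)\,d\mu(t)$, applies vector-valued Minkowski (this is where the restriction $q_2\ge 1$ enters), and then lifts back via Lemma \ref{derivMNS} to pick up the extra $\beta$ in the weight. You instead keep the $D_\beta$ on the $F_\mu$ side via the identity (\ref{eq1}), invoke $D_\beta F_\mu\in H(1,\infty,\beta)$ from Lemma \ref{derivfmu}, and let Lemma \ref{lemacon} do the combining. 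Your packaging is arguably cleaner since it avoids the separate lifting step, and because Lemma \ref{lemacon} does not require $q_2\ge 1$ your route would in fact go through for all $0<q_1\le q_2<\infty$, slightly extending the stated range.
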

\begin{proof}  Let $f\in H(p_1,q_1,\gamma_1)$.   Using now Lemma \ref{Ialpha} and the vector-valued Minkowski's inequality we get 
$$
\|I_\beta(\mathcal C_{\mu,\beta}f)\|_ {(p_2,q_2,\gamma_3)}\le \int_0^1 \|\mathcal C^{\beta-1}f_t\|_{(p_2,q_2,\gamma_3)}d\mu(t).
$$
Now, from Theorem \ref{cbeta}, we know that for $\gamma_3\ge \gamma_1+1/p_1-1/p_2$  
$$
\sup_{0<t<1}\|\mathcal C^{\beta-1}f_t\|_{(p_2,q_2, \gamma_3)}\lesssim \sup_{0<t<1}\|f_t\|_{(p_1,q_1, \gamma_1)}<\infty.
$$
Hence, we obtain that 
$I_\beta(\mathcal C_{\mu,\beta}f)\in H(p_2,q_2,\gamma_3)$ and therefore,
$\mathcal C_{\mu,\beta}f\in H(p_2,q_2,\gamma_3+\beta)$ and the proof is complete.
\end{proof}

We now analyze when the mapping $\mathcal C_{\mu,\beta}$ is bounded from $H(p_1,q_1,\gamma_1)$ into $H(p_2,q_2,\gamma_2)$ for $\gamma_2<\beta+\gamma_1+1/p_1-1/p_2$. We start with the case $q_1=q_2=\infty$.

\begin{prop} \label{carlesonhpinfty}
    Let  $1\le p_1, p_2\le \infty$ and $\beta>0$. Assume that
    $$s=\beta+\gamma_1-\gamma_2+\frac{1}{p_1}-\frac{1}{p_2} >0.$$
    The following are equivalent.
    
    (i) $\mathcal C_{\mu,\beta}:H(p_1,\infty,\gamma_1)\to H(p_2,\infty,\gamma_2)$ is bounded for some $1\le p_1,p_2\le\infty$.
    
    (ii) $\mu$ is $s$-Carleson.

    (iii) $\mathcal C_{\mu,\beta}:H(p_1,\infty,\gamma_1)\to H(p_2,\infty,\gamma_2)$ is bounded for all $1\le p_2\le p_1\le\infty$
    and $\beta>\frac{1}{p_2}-\frac{1}{p_1}$.
\end{prop}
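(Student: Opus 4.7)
The plan is to prove the cycle (iii) $\Rightarrow$ (i) $\Rightarrow$ (ii) $\Rightarrow$ (iii), treating $p_1, p_2$ as fixed so that $s$ is unambiguous; the direction (iii) $\Rightarrow$ (i) is immediate by specialization. The two substantive implications rest on Theorem \ref{carlesonfmu} (the characterization of the $s$-Carleson condition via fractional derivatives of $F_\mu$) and on the Poisson-type identity (\ref{carlesonpoisson2}).

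For (i) $\Rightarrow$ (ii), I would test the operator on the extremal kernel $K_{\alpha-1}$ with $\alpha = \gamma_1 + 1/p_1$. By Lemma \ref{fact} this function belongs to $H(p_1,\infty,\gamma_1)$, and a direct calculation from the integral definition of $\mathcal{C}_{\mu,\beta}$ yields
$$\mathcal{C}_{\mu,\beta}(K_{\alpha-1})(z) = \int_0^1 \frac{d\mu(t)}{(1-tz)^{\alpha+\beta}} = D_{\alpha+\beta-1} F_\mu(z).$$
Assumption (i) therefore places $D_{\alpha+\beta-1} F_\mu$ in $H(p_2,\infty,\gamma_2)$. Setting $\alpha' := \alpha + \beta - 1$, the hypothesis $s > 0$ is exactly the condition $\gamma_2 < \alpha' + 1/p_2'$, so Theorem \ref{carlesonfmu}(ii) applies and yields that $\mu$ is $s$-Carleson with exponent $\alpha' + 1/p_2' - \gamma_2 = \beta + \gamma_1 - \gamma_2 + 1/p_1 - 1/p_2 = s$, which is (ii).

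For (ii) $\Rightarrow$ (iii), fix a valid pair $p_2 \le p_1$ with $\beta > 1/p_2 - 1/p_1$ and define $p_3$ by $1/p_3 = 1/p_2 - 1/p_1$, so that $\beta > 1/p_3$. Applying the vector-valued Minkowski inequality to the integral representation of $\mathcal{C}_{\mu,\beta} f$, followed by pointwise H\"older's inequality on the circle with exponents $p_1, p_3, p_2$, gives
$$M_{p_2}(\mathcal{C}_{\mu,\beta} f, r) \le \int_0^1 M_{p_1}(f, tr)\, M_{p_3}(K_{\beta-1}, tr)\, d\mu(t).$$
The bounds $M_{p_1}(f, tr) \lesssim \|f\|_{(p_1,\infty,\gamma_1)}(1-tr)^{-\gamma_1}$ and, by (\ref{e3}), $M_{p_3}(K_{\beta-1}, tr) \lesssim (1-tr)^{-(\beta - 1/p_3)}$ reduce the integral to a constant multiple of $\int_0^1 (1-tr)^{-(s + \gamma_2)} d\mu(t)$, which by (\ref{carlesonpoisson2}) and the $s$-Carleson hypothesis is $O((1-r)^{-\gamma_2})$. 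This gives $\mathcal{C}_{\mu,\beta} f \in H(p_2, \infty, \gamma_2)$, establishing (iii).

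No single step is computationally hard; the main subtlety is bookkeeping and the handling of boundary cases. In the test-function argument one must verify that $\alpha' = \gamma_1 + 1/p_1 + \beta - 1 > -1$ and $\gamma_2 < \alpha' + 1/p_2'$, both immediate from $\gamma_1, \beta > 0$ and $s > 0$. In the Minkowski--H\"older argument the \emph{strict} inequality $\beta > 1/p_3$ is essential in order to apply (\ref{e3}) rather than the logarithmic endpoint (\ref{e2}); the endpoint cases $p_1 = \infty$ (so $p_3 = p_2$) and $p_1 = p_2$ (so $p_3 = \infty$, with $M_\infty(K_{\beta-1},r) \approx (1-r)^{-\beta}$) are covered uniformly by (\ref{hpinftyK}).
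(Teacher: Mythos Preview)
Your proposal is correct and follows essentially the same approach as the paper: the test function $K_{\alpha-1}$ with $\alpha=\gamma_1+1/p_1$ is exactly the paper's $K_{\gamma_1-1/p_1'}$, and your Minkowski--H\"older estimate for (ii)$\Rightarrow$(iii) matches the paper's computation line by line. The only cosmetic difference is that you make the endpoint cases $p_1=\infty$ and $p_1=p_2$ explicit, whereas the paper absorbs them into the general H\"older step.
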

\begin{proof}  (i) $\Longrightarrow$ (ii) Assume that 
$\mathcal C_{\mu,\beta}:H(p_1,\infty,\gamma_1)\to H(p_2,\infty,\gamma_2)$ for a given pair $1\le p_1,p_2\le \infty.$
Set $\alpha=\gamma_1-\frac{1}{p_1'}+\beta$.
Using that $K_{\gamma_1-\frac{1}{p_1'}}\in H(p_1,\infty,\gamma_1)$ we have that $$D_{\alpha}F_\mu=\mathcal C_{\mu,\beta}(K_{\gamma_1-\frac{1}{p_1'}})\in H(p_2,\infty,\gamma_2).$$
  Observe that 
$\gamma_2< \gamma_1+\frac{1}{p_1}-\frac{1}{p_2}+\beta=\alpha + \frac{1}{p'_2}.$ 
Invoking Theorem \ref{carlesonfmu} we conclude that  $\mu$ is $s$-Carleson for $s=\beta+\gamma_1-\gamma_2+\frac{1}{p_1}-\frac{1}{p_2}$. 

(ii) $\Longrightarrow$ (iii) Assume now that $\mu$ is $s$-Carleson. Let $1\le p_2\le p_1\le \infty$, $\frac{1}{p_2}=\frac{1}{p_1} 
+\frac{1}{p_3}$ and $\beta>\frac{1}{p_3}$. We can estimate
\begin{align*}
M_{p_2}(\mathcal C_{\mu,\beta}f,r)&\le 
\int_0^1 M_{p_2}(fK_{\beta-1},rt)d\mu(t)\\
&\le 
\int_0^1 M_{p_1}(f,rt)M_{p_3}(K_{\beta-1},rt)d\mu(t)\\
&\lesssim 
\int_0^1 \frac{M_{p_3}(K_{\beta-1},rt)}{(1-rt)^{\gamma_1}}d\mu(t)\\
&\lesssim 
\int_0^1 \frac{d\mu(t)}{(1-rt)^{\gamma_1+\beta-\frac{1}{p_3}}}\\
&\approx
\int_0^1 \frac{d\mu(t)}{(1-rt)^{s+\gamma_2}}
\lesssim \frac{1}{(1-r)^{\gamma_2}}.
\end{align*}
(iii) $\Longrightarrow$ (i) is obvious.
\end{proof}

The next result provides a consequence of the boundedness of $\mathcal C_{\mu,\beta}$ from $H(p_1,q_1,\gamma_1)$ into $H(p_2,q_2,\gamma_2)$ in terms of a Carleson condition when $0<q_1,q_2<\infty$.

\begin{lem}\label{lemasufcondcarleson}
    Let $0<p_1,p_2,q_1,q_2,\gamma_1,\gamma_2, \beta<\infty$ such that
    $$s=\beta+\gamma_1-\gamma_2+\frac{1}{p_1}-\frac{1}{p_2}>0.
    $$
 If $\mathcal C_{\mu,\beta}$ maps $H(p_1,q_1,\gamma_1)$ into $H(p_2,q_2,\gamma_2)$, then $\mu$ is an $s$-Carleson measure.

   In particular, if $\mathcal C_{\mu,\beta}$ maps $H(p,q_1,\gamma_1)$ into $H(p,q_2,\gamma_2)$ for some $0<p<\infty$, $0<q_1,q_2<\infty$ and $\gamma_2<\gamma_1+\beta$, then $\mu$ is a $(\beta+\gamma_1-\gamma_2)$-Carleson measure.
\end{lem}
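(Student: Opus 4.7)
The plan is to mimic the argument of Proposition \ref{carlesonhpinfty} (i) $\Longrightarrow$ (ii), which used the single test function $K_{\gamma_1-1/p_1'}$ lying exactly on the boundary of $H(p_1,\infty,\gamma_1)$. Since this kernel just fails to belong to $H(p_1,q_1,\gamma_1)$ once $q_1<\infty$, I will instead test the boundedness against the normalized ``reproducing-kernel'' family
$$f_a(z) = \frac{(1-a)^{b-\gamma_1-1/p_1}}{(1-az)^b},\qquad a\in[1/2,1),$$
with $b>\gamma_1+1/p_1$ fixed, and evaluate the image at $w=a$.

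The main technical point, which I expect to be the primary obstacle, is to verify that $\|f_a\|_{(p_1,q_1,\gamma_1)} \approx 1$ uniformly in $a$. Using the standard asymptotic $M_{p_1}(K_{b-1},ar) \approx (1-ar)^{1/p_1-b}$ (valid since $b>1/p_1$) together with $1-ar\approx (1-a)+(1-r)\approx \max(1-a,1-r)$, and splitting the resulting integral $\int_0^1 (1-r)^{\gamma_1 q_1-1}(1-ar)^{(1/p_1-b)q_1}dr$ at $r=a$, one checks that the hypothesis $b>\gamma_1+1/p_1$ makes both pieces comparable to $(1-a)^{-(b-\gamma_1-1/p_1)q_1}$, which exactly cancels the prefactor $(1-a)^{(b-\gamma_1-1/p_1)q_1}$ built into $f_a$.

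Once this normalization is granted, the boundedness hypothesis together with the inclusion $H(p_2,q_2,\gamma_2)\subset H(p_2,\infty,\gamma_2)$ from \eqref{incluq} yields $\|\mathcal C_{\mu,\beta}(f_a)\|_{(p_2,\infty,\gamma_2)}\lesssim 1$. The standard pointwise bound $|g(w)|\lesssim (1-|w|)^{-1/p_2-\gamma_2}\|g\|_{(p_2,\infty,\gamma_2)}$, which comes from subharmonicity of $|g|^{p_2}$, applied at $w=a$ then gives $|\mathcal C_{\mu,\beta}(f_a)(a)|\lesssim (1-a)^{-1/p_2-\gamma_2}$. On the other hand, the integral representation of $\mathcal C_{\mu,\beta}$ combined with $1-a^2 t\approx 1-at$ for $a\in[1/2,1)$ produces
$$\mathcal C_{\mu,\beta}(f_a)(a) \approx (1-a)^{b-\gamma_1-1/p_1}\int_0^1 \frac{d\mu(t)}{(1-at)^{b+\beta}}.$$

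Comparing these two estimates and collecting exponents, with $\gamma:=b+\beta-s$ (positive since $b>\gamma_1+1/p_1>s-\beta$), the resulting inequality is exactly
$$\int_0^1 \frac{d\mu(t)}{(1-at)^{s+\gamma}} \lesssim \frac{1}{(1-a)^{\gamma}},$$
i.e., condition \eqref{carlesonpoisson2}. The converse argument already contained in the proof of that characterization, namely $\mu([a,1))/(1-a)^{s+\gamma}\lesssim \int_a^1 d\mu(t)/(1-at)^{s+\gamma}\lesssim (1-a)^{-\gamma}$, then delivers $\mu([a,1))\lesssim (1-a)^s$, showing $\mu$ is $s$-Carleson. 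The ``in particular'' statement is just the specialization to $p_1=p_2=p$, for which $s=\beta+\gamma_1-\gamma_2$.
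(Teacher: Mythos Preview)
Your argument is correct and takes a genuinely different route from the paper's. The paper tests against the specific family $f_r(z)=(1-rz)^{-(1/p_1+1/q_1+\gamma_1)}$ (with $\|f_r\|_{(p_1,q_1,\gamma_1)}^{q_1}\lesssim (1-r)^{-1}$, not normalized), and then bounds $\|\mathcal C_{\mu,\beta}f_r\|_{(p_2,q_2,\gamma_2)}$ \emph{from below} by passing to the radial integral via the F\'ejer--Riesz inequality, lower-bounding the integrand by restricting to $t\in[r,1)$, and integrating in $\rho$. Your approach replaces F\'ejer--Riesz by the elementary point evaluation $|g(a)|\lesssim (1-a)^{-1/p_2-\gamma_2}\|g\|_{(p_2,\infty,\gamma_2)}$ after the inclusion \eqref{incluq}, and uses a family with a free exponent $b>\gamma_1+1/p_1$ normalized to have $\|f_a\|_{(p_1,q_1,\gamma_1)}\approx 1$. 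This is shorter and avoids the extra radial integration; what the paper's route buys is a lower bound on the full target norm (rather than a single point value), though both extract exactly the same Carleson condition. One minor remark: you only need the upper bound $\|f_a\|_{(p_1,q_1,\gamma_1)}\lesssim 1$, not the full equivalence, and your restriction $a\in[1/2,1)$ is harmless since $\mu([a,1))\le\mu([0,1))\lesssim (1-a)^s$ trivially for $a\le 1/2$.
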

\begin{proof}
    Assume that $\mathcal C_{\mu,\beta}$ maps $H(p_1,q_1,\gamma_1)$ into $H(p_2,q_2,\gamma_2)$.   Let $0<r<1$  and define  
\[
f_r(z)=\frac{1}{(1-rz)^{\frac{1}{p_1}+\frac{1}{q_1}+\gamma_1}}.
\]
 It is easy to see that  $\|f_r\|^{q_1}_{H(p_1,q_1,\gamma_1)}\lesssim \frac{1}{1-r}$.

Hence, denoting
$$g_r(z)=\int_0^1 \frac{d\mu(t)}{(1-tz)^\beta(1-trz)^{\frac{1}{p_1}+\frac{1}{q_1}+\gamma_1}}$$
we have  $\|g_r\|^{q_2}_{H(p_2,q_2,\gamma_2)}\lesssim \frac{1}{(1-r)^{q_2/q_1}}$.
 Using F\`ejer-Riesz inequality  (see \cite[Theorem 3.13]{D}) we have
$$\int_0^1 (\int_0^1 \frac{d\mu(t)}{(1-t\rho s)^{\beta}(1-tr\rho s)^{\frac{1}{p_1}+\frac{1}{q_1}+\gamma_1}})^{p_2}ds\lesssim  M_{p_2}^{p_2}(g_r,\rho), \quad 0<\rho<1.$$
Since 
$$
\int_0^1 \frac{d\mu(t)}{(1-t\rho s)^{\beta}(1-t r\rho s)^{\frac{1}{p_1}+\frac{1}{q_1}+\gamma_1}} \gtrsim  \frac{\mu([r,1))}{(1-r\rho s)^{\frac{1}{p_1}+\frac{1}{q_1}+\gamma_1+\beta}},
$$
we get 
\begin{align*}
M_{p_2}^{p_2}(g_r,\rho)&\gtrsim \mu^{p_2}([r,1))\int_0^1 \frac{ds}{(1-r\rho s)^{p_2 \left( \frac{1}{p_1}+\frac{1}{q_1}+\gamma_1+\beta\right)}} \\
&\gtrsim \frac{\mu^{p_2}([r,1))}{(1-r\rho)^{p_2 \left( \frac{1}{p_1}+\frac{1}{q_1}+\gamma_1+\beta\right)-1}},
\end{align*}
and
\begin{align*}
 \|g_r\|^{q_2}_{H(p_2,q_2,\gamma_2)}&\gtrsim \mu^{q_2}([r,1))\int_0^1   \frac{(1-\rho)^{q_2\gamma_2-1}}{(1-r\rho)^{q_2 \left( \frac{1}{p_1}-\frac{1}{p_2}+\frac{1}{q_1}+\gamma_1+\beta\right)}} d\rho\\
&  \gtrsim  \frac{\mu^{q_2}([r,1))}{(1-r)^{q_2 s+ \frac{q_2}{q_1}}}.
\end{align*}
Hence, we obtain 
\[
 \frac{\mu^{q_2}([r,1))}{(1-r)^{\frac{q_2}{q_1}+q_2 s}}\lesssim \|g_r\|^{q_2}_{H(p_2,q_2,\gamma_2)}\lesssim \frac{1}{(1-r)^{\frac{q_2}{q_1}}}. 
\] 
This gives that $\mu$ is an $s$-Carleson measure.
\end{proof}

We now investigate the implications for the boundedness of $\mathcal C_{\mu,\beta}$ under the assumption that $\mu$ is an $s$-Carleson measure.
 \begin{lem}  Let $\mu$ be an $s$-Carleson measure and let $\beta>s$. Then
$$|\mathcal C_{\mu,\beta}f(z)|\lesssim\frac{P^*f(z)}{(1-|z|)^{\beta-s}} $$
where $P^*(f)(z)=\sup_{0<t<1}|f(tz)|$ is the Poisson maximal function of $f$.

In particular if $\mu$ is an $s$-Carleson measure then $$\mathcal C_{\mu,\beta}(H(p,q,\gamma))\subset H(p,q, \gamma+\beta-s).$$

\end{lem}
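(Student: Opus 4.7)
The plan is to derive the pointwise estimate by pulling $|f(tz)|$ out of the integral as $P^*f(z)$ and then bounding the remaining Carleson-type integral via the characterization \eqref{carlesonpoisson}; the mixed norm inclusion then follows by applying a standard radial maximal function inequality circle by circle.

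Starting from the integral representation
\[
\mathcal C_{\mu,\beta}f(z)=\int_0^1 \frac{f(tz)}{(1-tz)^\beta}\,d\mu(t),
\]
the triangle inequality together with the trivial bound $|f(tz)|\le P^*f(z)$ gives
\[
|\mathcal C_{\mu,\beta}f(z)|\le P^*f(z)\int_0^1 \frac{d\mu(t)}{|1-tz|^\beta}.
\]
Since $\beta>s$, I would then apply the equivalent formulation \eqref{carlesonpoisson} of the $s$-Carleson condition with exponent $s+(\beta-s)=\beta$ to obtain
\[
\int_0^1 \frac{d\mu(t)}{|1-tz|^\beta}\lesssim \frac{1}{(1-|z|)^{\beta-s}},
\]
which immediately yields the claimed pointwise estimate.

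For the inclusion $\mathcal C_{\mu,\beta}(H(p,q,\gamma))\subset H(p,q,\gamma+\beta-s)$, the key auxiliary fact I would invoke is the radial maximal function bound $M_p(P^*f,r)\lesssim M_p(f,r)$, valid for all $0<p\le\infty$. This is obtained by applying the classical radial (or nontangential) maximal inequality on the Hardy space to the dilated function $f_r$, since
\[
(P^*f)(re^{i\theta})=\sup_{0<t<1}|f_r(te^{i\theta})|
\]
is exactly the radial maximal function of $f_r\in H^\infty\subset H^p$. Combining with the pointwise estimate gives
\[
(1-r)^{\gamma+\beta-s}\,M_p(\mathcal C_{\mu,\beta}f,r)\lesssim (1-r)^{\gamma}\,M_p(f,r),
\]
and raising to the $q$-th power and integrating against $\tfrac{dr}{1-r}$ (or taking the supremum when $q=\infty$) yields $\|\mathcal C_{\mu,\beta}f\|_{(p,q,\gamma+\beta-s)}\lesssim \|f\|_{(p,q,\gamma)}$.

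The only step that requires any real care is the radial maximal bound for $0<p<1$, which has to be taken from the classical $H^p$ theory. All other steps are routine applications of \eqref{carlesonpoisson} and the definition of the mixed norm spaces.
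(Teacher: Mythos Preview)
Your proposal is correct and follows essentially the same route as the paper: both arguments pull $P^*f(z)$ out of the integral, invoke \eqref{carlesonpoisson} with $\gamma=\beta-s$ to bound $\int_0^1 |1-tz|^{-\beta}\,d\mu(t)$, and then use $M_p(P^*f,r)\lesssim M_p(f,r)$ to pass to the mixed norm conclusion. Your write-up is in fact slightly more detailed in justifying the radial maximal bound (via the dilation $f_r$) and in handling the case $q=\infty$, but the underlying strategy is identical.
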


\begin{proof}  Observe that using \eqref{carlesonpoisson} we can write
$$|\mathcal C_{\mu,\beta}f(z)|\le  \int_0^1 \frac{|f(tz)|}{|1-tz|^{\beta}}d\mu(t) \le \sup_{0<t<1}|f(tz)|\int_0^1 \frac{d\mu(t)}{|1-tz|^\beta}\lesssim \frac{P^*f(z)}{(1-|z|)^{\beta-s}}. $$
Using that $M_p(P^*f,r)\lesssim M_p(f,r)$ we get the conclusion for any $0<p,q<\infty$.
\end{proof}

 We are now ready to state our main theorem, which in particular extends the previous lemma to $s=\beta$.
 
\begin{thm} \label{main2} Let $\gamma_1,\gamma_2,\beta>0$ such that $\gamma_2<\gamma_1+\beta$. The following statements are equivalent.

\begin{enumerate}[(i)]
    \item  $\mathcal C_{\mu,\beta}:A^\infty_{\gamma_1}\to A^\infty_{\gamma_2}$ is bounded.

    \item  $\mu$ is a $(\beta+\gamma_1-\gamma_2)$-Carleson measure.

    \item $\mathcal C_{\mu,\beta}:H(p,\infty,{\gamma_1})\to H(p,\infty,{\gamma_2})$ is bounded for all  $1\le p\le\infty$.
    
 \item $\mathcal C_{\mu,\beta}:H(p,q,{\gamma_1})\to H(p,q,{\gamma_2})$ is bounded for all $1\le p<\infty$ and $0<q< \infty$.

    \item 
 $\mathcal C_{\mu,\beta}:H(p,q,{\gamma_1})\to H(p,q,{\gamma_2})$ is bounded for some $1\le p<\infty$ and $0<q< \infty$.

 \item 
 $\mathcal C_{\mu,\beta+\delta}:H(p,q,{\gamma_1-\delta})\to H(p,q,{\gamma_2})$ is bounded for some $1\le p<\infty$, $0<\delta<\gamma_1$ and $0<q< \infty$.
\end{enumerate}
\end{thm}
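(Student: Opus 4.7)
The plan is to close a cycle of implications by using the already-established equivalences (i) $\Leftrightarrow$ (ii) $\Leftrightarrow$ (iii) as a hub. Specializing Proposition \ref{carlesonhpinfty} to $p_1 = p_2 = p$ makes its Carleson exponent equal to $s = \beta + \gamma_1 - \gamma_2$, which is positive thanks to the standing hypothesis $\gamma_2 < \gamma_1 + \beta$; this gives both (i) $\Rightarrow$ (ii) and (ii) $\Rightarrow$ (iii), while (iii) $\Rightarrow$ (i) is simply specialization to $p = \infty$. The implication (iv) $\Rightarrow$ (v) is trivial, and (v) $\Rightarrow$ (ii) is Lemma \ref{lemasufcondcarleson} applied with $p_1 = p_2 = p$. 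For (vi), factorize $\mathcal C_{\mu,\beta+\delta}(f) = \mathcal C_{\mu,\beta}(fK_{\delta-1})$: since $K_{\delta-1}\in H(\infty,\infty,\delta)$ by Lemma \ref{fact} and $0<\delta<\gamma_1$, Lemma \ref{lemaprod} gives $fK_{\delta-1}\in H(p,q,\gamma_1)$ whenever $f\in H(p,q,\gamma_1-\delta)$, so (iv) $\Rightarrow$ (vi); and (vi) $\Rightarrow$ (ii) is Lemma \ref{lemasufcondcarleson} applied to the shifted parameters $\beta' = \beta+\delta,\, \gamma_1' = \gamma_1-\delta$, which produce the same exponent~$s$.

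All the real content lies in (ii) $\Rightarrow$ (iv). The starting point is the pointwise-in-$r$ bound coming from $|1-tre^{i\theta}| \ge 1-tr$ and Minkowski's inequality in $L^p(\mathbb{T})$ (this is where $p\ge 1$ is used):
\[
M_p(\mathcal C_{\mu,\beta}f, r) \;\le\; \int_0^1 \frac{M_p(f, tr)}{(1-tr)^\beta}\, d\mu(t).
\]
Partition $[0,1)$ dyadically via $E_n = [1-2^{-n},\, 1-2^{-n-1})$; the $s$-Carleson condition gives $\mu(E_n)\lesssim 2^{-ns}$. At the test point $r = 1-2^{-K}$ and for $t\in E_n$, one has $1-tr\approx 2^{-\min(n,K)}$ and $M_p(f,tr)\approx \phi_{\min(n,K)}$, where $\phi_m:=M_p(f,1-2^{-m})$ is increasing. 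Splitting the resulting sum at $n=K$, absorbing the geometric tail for $n>K$, and using $\beta-s=\gamma_2-\gamma_1$, one arrives at
\[
M_p(\mathcal C_{\mu,\beta}f,\, 1-2^{-K}) \;\lesssim\; \sum_{n=0}^{K} \phi_n\, 2^{(\gamma_2-\gamma_1)n}.
\]

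To conclude, invoke the dyadic equivalence $\|g\|_{(p,q,\gamma)}^q \approx \sum_K 2^{-K\gamma q} M_p(g, 1-2^{-K})^q$: setting $a_n := \phi_n\, 2^{-n\gamma_1}$, the target estimate $\|\mathcal C_{\mu,\beta}f\|_{(p,q,\gamma_2)}\lesssim \|f\|_{(p,q,\gamma_1)}$ reduces to the discrete Hardy-type inequality
\[
\sum_K \Bigl(\sum_{n \le K} a_n\, 2^{(n-K)\gamma_2}\Bigr)^q \;\lesssim\; \sum_n a_n^q,
\]
which, since $\lambda := 2^{-\gamma_2} < 1$, follows from Young's convolution in $\ell^q$ when $q\ge 1$ and from the subadditivity $(\sum x_n)^q \le \sum x_n^q$ combined with a swap of summation when $0<q<1$. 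The main obstacle to watch is precisely this $q=1$ threshold: Minkowski's inequality in the outer $r$-integral is unavailable for $q<1$, so every estimate must be kept pointwise in $r$ all the way up to the final dyadic rearrangement, which is exactly what the plan above is designed to accommodate.
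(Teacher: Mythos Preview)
Your proof is correct, and for the peripheral implications it matches the paper almost exactly: you invoke Proposition~\ref{carlesonhpinfty} for (i)\,$\Leftrightarrow$\,(ii)\,$\Leftrightarrow$\,(iii), Lemma~\ref{lemasufcondcarleson} for (v)\,$\Rightarrow$\,(ii) and (vi)\,$\Rightarrow$\,(ii), and the factorization behind Proposition~\ref{bajando}(i) for (iv)\,$\Rightarrow$\,(vi). The paper orders things as (v)\,$\Rightarrow$\,(vi)\,$\Rightarrow$\,(ii) rather than deriving (v)\,$\Rightarrow$\,(ii) directly, but this is cosmetic.

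The substantive difference is in (ii)\,$\Rightarrow$\,(iv). The paper works through the Hadamard-multiplier machinery that is the organizing theme of the article: it writes $D_N\mathcal C_{\mu,\beta}f=D_NF_\mu*fK_{\beta-1}$, uses $K_{\beta-1}\in H(p_1,\infty,\beta-1/p_1)$ (Lemma~\ref{fact}) and Lemma~\ref{lemaprod} to place $fK_{\beta-1}$ in a mixed norm space, invokes Theorem~\ref{carlesonfmu} to place $D_NF_\mu$ in $H(p_1',\infty,\cdot)$, and then applies the convolution Lemma~\ref{lemacon} and Lemma~\ref{derivMNS}. Your argument bypasses all of this: you bound $M_p(\mathcal C_{\mu,\beta}f,r)$ directly by Minkowski, dyadically decompose the $t$-integral using only the raw Carleson bound $\mu(E_n)\lesssim 2^{-ns}$, and reduce to a weighted discrete Hardy inequality which you handle uniformly in $q$ (Young for $q\ge1$, subadditivity for $q<1$). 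This is more elementary and self-contained --- it never touches $F_\mu$ or fractional derivatives --- and it makes transparent why the pointwise-in-$r$ bookkeeping is essential for $q<1$. The paper's route, on the other hand, integrates the result into its broader $F_\mu$--multiplier framework and reuses lemmas that are needed elsewhere anyway. One minor imprecision: you write $M_p(f,tr)\approx\phi_{\min(n,K)}$, but only the upper bound $M_p(f,tr)\le\phi_{\min(n,K)+O(1)}$ holds (and is all you need), since $M_p(f,\cdot)$ is merely increasing.
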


\begin{proof}

     (i) $\Longrightarrow$ (ii) It follows from a particular case of Proposition \ref{carlesonhpinfty} but we include an independent argument that works for this particular case.  Assume that $\mathcal C_{\mu,\beta}:A^\infty_{\gamma_1}\to A^\infty_{\gamma_2}$ is bounded. Then for each $n\in \N$, denoting $u_n(z)=z^n$ we have
$$\sup_{0<r<1} (1-r)^{\gamma_2} M_\infty(\mathcal C_{\mu,\beta} u_n,r)\lesssim \sup_{0<r<1} (1-r)^{\gamma_1} M_\infty(u_n,r).$$
Now use that  $$\mathcal C_{\mu,\beta} u_n= \sum_{k=n}^\infty \mu_k\frac{\Gamma(k-n+\beta)}{(k-n)!\Gamma(\beta)} u_k$$  and select $r_n=\frac{n}{n+1}$ to obtain
 \[
M_\infty (\mathcal{C}_{\mu,\beta} u_n,r_n)\gtrsim \sum_{k=n}^{2n} \mu_k \frac{\Gamma(k-n+\beta)}{(k-n)!\Gamma(\beta)}\gtrsim (n+1)^{\beta}\mu_{2n}.
    \]
Using now that $$\sup_{0<r<1} (1-r)^{\gamma_1} M_\infty(u_n,r)\approx (n+\gamma_1)^{-\gamma_1}$$ we conclude that
 $\mu_{2n}\lesssim (n+1)^{\gamma_2-\gamma_1-\beta}$, then $\mu$ is a $(\beta+\gamma_1-\gamma_2)$-Carleson measure.

(ii) $\Longrightarrow$ (iii) This follows from Proposition \ref{carlesonhpinfty}.

(iii) $\Longrightarrow$ (i) It is obvious.

(ii) $\Longrightarrow$ (iv) Assume that $\mu$ is $s$-Carleson with $s=\beta+\gamma_1-\gamma_2$.
 Let us select  $N\in \mathbb{N}$ such that $s=\beta+\gamma_1-\gamma_2<N$ and $p_1>\max \left\{1,\frac{1}{\beta}\right\}$. 
 Given  $f\in H(p,q, {\gamma_1})$ we shall show that $D_N\mathcal C_{\mu,\beta} f= D_NF_\mu * fK_{\beta-1}\in H(p,q, \gamma_2+N)$. Using that  $K_{\beta-1}\in H(p_1,\infty, \beta-1/p_1)$ we have $fK_{\beta-1}\in H(p_2,q,\gamma_1+\beta-1/p_1)$ for $1/p_2=1/p+1/p_1$. 
Finally, using Theorem \ref{carlesonfmu} with $\alpha=N$ and $s=\beta+\gamma_1-\gamma_2$ we know that $D_NF_\mu\in H(p'_1,\infty, N+\frac{1}{p_1}-s)$. Hence, since $1/p_2+1/p_1'=1/p+1$, using Lemma \ref{lemacon} we get  $D_NF_\mu* fK_{\beta-1}\in H(p,q, \gamma_2+N)$.

(iv) $\Longrightarrow$ (v) It is obvious.

(v) $\Longrightarrow$ (vi) It follows from (i) in Proposition \ref{bajando}.

(vi) $\Longrightarrow$ (ii)  It follows by Lemma \ref{lemasufcondcarleson} applied to $ \tilde \beta=\beta+\delta$, $p_1=p_2=p$, $q_1=q_2=q$, $\tilde\gamma_1=\gamma_1-\delta$ and $\tilde\gamma_2=\gamma_2$.
\end{proof}

\begin{cor} \label{manicoro} Let $\gamma,\beta>0$. The following statements are equivalent.

 \begin{enumerate}[(i)]
    \item  $\mathcal C_{\mu,\beta}:A^\infty_{\gamma}\to A^\infty_{\gamma}$ is bounded.

\item  $\mu$ is a $\beta$-Carleson measure.

\item   $\mathcal C_{\mu,\beta+\delta}:H(p,q,{\gamma-\delta})\to H(p,q,{\gamma})$ is bounded for all $1\le p\le\infty$, $0\le \delta<\gamma$ and $0<q\le \infty$.

   \item  
 $\mathcal C_{\mu,\beta+\delta}:H(p,q,{\gamma-\delta})\to H(p,q,{\gamma})$ is bounded for some $1\le p\le\infty$, $0\le \delta<\gamma$ and $0<q\le \infty$.
\end{enumerate}

\end{cor}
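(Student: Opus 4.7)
The plan is to reduce the corollary directly to Theorem \ref{main2} and Proposition \ref{carlesonhpinfty} by the substitution $\tilde\beta=\beta+\delta$, $\gamma_1=\gamma-\delta$, $\gamma_2=\gamma$. With this choice one has $\tilde\beta+\gamma_1-\gamma_2=\beta$, so the $(\tilde\beta+\gamma_1-\gamma_2)$-Carleson exponent appearing in Theorem \ref{main2} collapses to the single value $\beta$ and is independent of $\delta$, which is exactly what statement (iii) needs. The equivalence (i) $\Longleftrightarrow$ (ii) is then literally Theorem \ref{main2} (i) $\Longleftrightarrow$ (ii) applied with $\gamma_1=\gamma_2=\gamma$ (noting $A^\infty_\gamma=H(\infty,\infty,\gamma)$), and (iii) $\Longrightarrow$ (iv) is trivial.

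For (ii) $\Longrightarrow$ (iii), fix $\delta\in[0,\gamma)$. When $q=\infty$ and $1\le p\le\infty$, one invokes Theorem \ref{main2} (ii) $\Longrightarrow$ (iii) (equivalently Proposition \ref{carlesonhpinfty} with $p_1=p_2=p$). When $1\le p<\infty$ and $0<q<\infty$, one applies Theorem \ref{main2} (ii) $\Longrightarrow$ (iv). The leftover range $p=\infty$, $0<q<\infty$ is not explicitly listed in Theorem \ref{main2}, but it can be handled directly from the pointwise estimate proved just before Theorem \ref{main2}: under the $\beta$-Carleson assumption and for $\delta>0$, one has
\[
|\mathcal{C}_{\mu,\beta+\delta}f(z)|\lesssim \frac{P^*f(z)}{(1-|z|)^\delta},
\]
so $M_\infty(\mathcal{C}_{\mu,\beta+\delta}f,r)\lesssim M_\infty(f,r)/(1-r)^\delta$, and integrating against $(1-r)^{q\gamma-1}\,dr$ yields the boundedness from $H(\infty,q,\gamma-\delta)$ to $H(\infty,q,\gamma)$. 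The remaining subcase $\delta=0$, $p=\infty$, $0<q<\infty$ is handled through the Hadamard factorization $\mathcal{C}_{\mu,\beta}f=F_\mu*(fK_{\beta-1})$: choose $N$ with $N>\beta$; Theorem \ref{carlesonfmu} gives $D_N F_\mu\in H(1,\infty,N+1-\beta)$, Lemma \ref{fact} gives $K_{\beta-1}\in H(1,\infty,\beta)$ so that $fK_{\beta-1}\in H(\infty,q,\gamma+\beta-1+1)=H(\infty,q,\gamma+\beta)$ by Lemma \ref{lemaprod}, and convolution (Lemma \ref{lemacon}) followed by Lemma \ref{derivMNS} yields $\mathcal{C}_{\mu,\beta}f\in H(\infty,q,\gamma)$.

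For (iv) $\Longrightarrow$ (ii) one simply selects the relevant necessity result according to the range of parameters that is witnessed: if $1\le p<\infty$ and $0<q<\infty$, apply Theorem \ref{main2} (vi) $\Longrightarrow$ (ii); if $q=\infty$, apply Proposition \ref{carlesonhpinfty} (i) $\Longrightarrow$ (ii); and in the case $p=\infty$, $0<q<\infty$ not covered by Lemma \ref{lemasufcondcarleson}, rerun the test-function argument of that lemma with $f_r(z)=(1-rz)^{-(\gamma-\delta)-1/q}$. A direct computation gives $\|f_r\|^{q}_{(\infty,q,\gamma-\delta)}\approx 1/(1-r)$, while the lower bound
\[
M_\infty(\mathcal{C}_{\mu,\beta+\delta}f_r,\rho)\gtrsim \frac{\mu([r,1))}{(1-r\rho)^{\beta+\delta+(\gamma-\delta)+1/q}}
\]
leads, after integration in $\rho$, to $\mu([r,1))^q\lesssim (1-r)^{q\beta}$, which is the $\beta$-Carleson condition.

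The main obstacle is precisely the cases with $p=\infty$ and $q<\infty$, which fall outside the explicit statements of Theorem \ref{main2} and Lemma \ref{lemasufcondcarleson}; the argument above shows that these gaps close by the pointwise Poisson-maximal estimate for sufficiency and by mimicking the existing test-function construction for necessity. Everything else is a bookkeeping application of the corresponding parts of Theorem \ref{main2} under the substitution $(\tilde\beta,\gamma_1,\gamma_2)=(\beta+\delta,\gamma-\delta,\gamma)$.
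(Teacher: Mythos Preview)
Your overall strategy is exactly the paper's intended one: the corollary is meant to follow from Theorem~\ref{main2} by the substitution $(\tilde\beta,\gamma_1,\gamma_2)=(\beta+\delta,\gamma-\delta,\gamma)$, and you correctly observe that the range $p=\infty$, $0<q<\infty$ is not literally covered by items (iv)--(vi) of Theorem~\ref{main2} and requires a separate argument. Your treatment of (iv)~$\Rightarrow$~(ii) in that range (the test-function computation) is correct, as is the use of the Poisson-maximal estimate for (ii)~$\Rightarrow$~(iii) when $\delta>0$.

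However, the subcase $\delta=0$, $p=\infty$, $0<q<\infty$ of (ii)~$\Rightarrow$~(iii) contains index slips that prevent the argument from closing. You assert $D_NF_\mu\in H(1,\infty,N+1-\beta)$, but Theorem~\ref{carlesonfmu} with $p=1$ (so $1/p'=0$) and $s=\beta$ gives $D_NF_\mu\in H(1,\infty,N-\beta)$; the exponent $N+1-\beta$ would correspond to $p=\infty$, not $p=1$. Similarly, to obtain $fK_{\beta-1}\in H(\infty,q,\gamma+\beta)$ via Lemma~\ref{lemaprod} you need $K_{\beta-1}\in H(\infty,\infty,\beta)$, not $K_{\beta-1}\in H(1,\infty,\beta)$ (with the latter the product lands in $H(1,q,\cdot)$). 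If one proceeds with your stated exponent $N+1-\beta$, the convolution step yields $D_N\mathcal C_{\mu,\beta}f\in H(\infty,q,N+1+\gamma)$ and hence only $\mathcal C_{\mu,\beta}f\in H(\infty,q,\gamma+1)$, which is too weak. With the corrected choices $D_NF_\mu\in H(1,\infty,N-\beta)$ and $K_{\beta-1}\in H(\infty,\infty,\beta)$, Lemma~\ref{lemacon} gives $D_N\mathcal C_{\mu,\beta}f\in H(\infty,q,N+\gamma)$ and the conclusion follows; this is precisely the $p=\infty$ instance of the argument used for (ii)~$\Rightarrow$~(iv) in Theorem~\ref{main2} (take $p_1$ large there and note the proof goes through verbatim for $p=\infty$).
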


Let us now apply Theorem \ref{main2} to obtain some applications on weighted Bergman spaces. First, using the identity $A^p_\alpha=H(p,p,\frac{\alpha+1}{p})$, we obtain the following corollary.

\begin{cor} \label{corCmubetaBergman1} Let $1\le p<\infty$, $s,\beta>0$ and  $\alpha>-1$ with $ s >\beta-\frac{1+\alpha}{p} $. Then, $\mu$ is an $s$-Carleson measure if and only if $\mathcal C_{\mu,\beta}$ maps $A^p_{\alpha+p(s-\beta)}$ into $A^p_{\alpha}$.

In particular
$\mathcal C_{\mu,\beta}$ maps $A^p_\alpha$ into itself if and only if $\mu$ is a $\beta$-Carleson measure.
\end{cor}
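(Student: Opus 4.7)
The plan is to reduce the statement to a direct application of Theorem \ref{main2}, using the identification $A^p_\alpha = H(p,p,\frac{\alpha+1}{p})$ for $\alpha>-1$ and $1\le p<\infty$. Set
\[
\gamma_1 = \frac{\alpha+p(s-\beta)+1}{p} = \frac{\alpha+1}{p}+s-\beta, \qquad \gamma_2 = \frac{\alpha+1}{p},
\]
so that $A^p_{\alpha+p(s-\beta)} = H(p,p,\gamma_1)$ and $A^p_\alpha = H(p,p,\gamma_2)$. The first task is to verify these parameters fit the hypotheses of Theorem \ref{main2}: the standing assumption $s>\beta-\frac{1+\alpha}{p}$ is exactly $\gamma_1>0$, so the domain space is legitimate; the inequality $\gamma_2<\gamma_1+\beta$ reduces to $s>0$, which is given; and a direct computation yields $\beta+\gamma_1-\gamma_2 = s$.

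With the parameters aligned, the equivalence (ii)$\Leftrightarrow$(v) of Theorem \ref{main2} (applied with $q=p\in[1,\infty)$) says precisely that $\mu$ is an $s$-Carleson measure if and only if $\mathcal C_{\mu,\beta}: H(p,p,\gamma_1)\to H(p,p,\gamma_2)$ is bounded. Re-expressing the mixed norm spaces as weighted Bergman spaces gives the first assertion of the corollary.

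For the second statement, specialize to $s=\beta$. Then $\alpha+p(s-\beta)=\alpha$, so the domain and target both coincide with $A^p_\alpha$. The hypothesis $s>\beta-\frac{1+\alpha}{p}$ becomes $\frac{1+\alpha}{p}>0$, which holds trivially under the standing assumptions. Hence $\mathcal C_{\mu,\beta}$ is bounded on $A^p_\alpha$ if and only if $\mu$ is $\beta$-Carleson.

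There is no genuine obstacle: the entire content is bookkeeping the parameter dictionary between the Bergman scale and the mixed norm scale, and checking that the two nontrivial constraints in Theorem \ref{main2} ($\gamma_1>0$ and $\gamma_2<\gamma_1+\beta$) translate exactly to the stated hypothesis $s>\beta-\frac{1+\alpha}{p}$ together with $s>0$.
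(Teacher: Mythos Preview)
Your proof is correct and matches the paper's own approach exactly: the paper simply states that the corollary follows from Theorem \ref{main2} via the identification $A^p_\alpha = H(p,p,\frac{\alpha+1}{p})$, and your argument carries out precisely this parameter translation, including the check that the hypotheses of Theorem \ref{main2} are met.
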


As an application of Corollary \ref{corCmubetaBergman1}, we obtain the following result.

\begin{cor}\label{corCmubetaBS} Let $1\le p\le q<\infty$, $\beta>0$ and $\alpha_1, \alpha_2 >-1$ satisfying that $ \frac{\alpha_1+2}{p}-\frac{\alpha_2+2}{q}+\beta >0 $. Then 
$\mathcal C_{\mu,\beta}$ maps $A^p_{\alpha_1}$ into $A^q_{\alpha_2}$ if and only if $\mu$ is an $s$-Carleson measure where
$$s=\beta+ \frac{\alpha_1+2}{p}-\frac{\alpha_2+2}{q}.$$
\end{cor}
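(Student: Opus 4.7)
The plan is to reduce the ``off-diagonal'' statement, in which the source and target have different Lebesgue exponents $p<q$, to the ``diagonal'' situation already settled in Theorem~\ref{main2} and Lemma~\ref{lemasufcondcarleson}. Both directions follow by recording $A^p_{\alpha_1}=H(p,p,(\alpha_1+1)/p)$ and $A^q_{\alpha_2}=H(q,q,(\alpha_2+1)/q)$ and then tracking the Carleson index through the embeddings between mixed norm spaces.

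For the \emph{necessity}, I would apply Lemma~\ref{lemasufcondcarleson} directly to the parameters $p_1=q_1=p$, $p_2=q_2=q$, $\gamma_1=(\alpha_1+1)/p$, $\gamma_2=(\alpha_2+1)/q$. The Carleson exponent produced by that lemma is
\[
\beta+\gamma_1-\gamma_2+\tfrac{1}{p_1}-\tfrac{1}{p_2}=\beta+\tfrac{\alpha_1+2}{p}-\tfrac{\alpha_2+2}{q}=s,
\]
so boundedness of $\mathcal C_{\mu,\beta}:A^p_{\alpha_1}\to A^q_{\alpha_2}$ immediately forces $\mu$ to be an $s$-Carleson measure. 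This is a one-line verification once one observes that the exponent $s$ in the corollary is designed to absorb the mismatch $1/p-1/q$.

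For the \emph{sufficiency}, the idea is to sandwich the action of $\mathcal C_{\mu,\beta}$ between two inclusions and a diagonal boundedness statement. First, the Hardy--Littlewood embedding (\ref{incluBergman}) gives $A^p_{\alpha_1}\subset H(q,p,\gamma_1')$ with $\gamma_1'=(\alpha_1+2)/p-1/q$, and then (\ref{incluq}) upgrades this to $A^p_{\alpha_1}\subset H(q,q,\gamma_1')$ since $p\le q$. Setting $\gamma_2'=(\alpha_2+1)/q$ so that $H(q,q,\gamma_2')=A^q_{\alpha_2}$, one checks that $\beta+\gamma_1'-\gamma_2'=s$, and the hypothesis $s>0$ is precisely the condition $\gamma_2'<\gamma_1'+\beta$ required to invoke Theorem~\ref{main2}. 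The implication (ii)$\Longrightarrow$(iv) of that theorem (with Lebesgue exponent $q$) then yields boundedness $\mathcal C_{\mu,\beta}:H(q,q,\gamma_1')\to H(q,q,\gamma_2')$, and composing with the inclusion $A^p_{\alpha_1}\subset H(q,q,\gamma_1')$ gives the desired mapping property.

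The main obstacle is really only bookkeeping: one must verify that the two different ways of lifting $A^p_{\alpha_1}$ and $A^q_{\alpha_2}$ into mixed norm spaces both produce the \emph{same} Carleson index $s$, and that the inequality $s>0$ in the hypothesis coincides with the parameter constraint needed in Theorem~\ref{main2}. No additional analytic input is required beyond the results already proved in the paper.
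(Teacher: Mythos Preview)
Your proposal is correct and follows essentially the same approach as the paper: for necessity you both apply Lemma~\ref{lemasufcondcarleson} with the identical parameters, and for sufficiency you both reduce to the diagonal case of Theorem~\ref{main2} at the level of $H(q,q,\cdot)$ via the sharp embedding $A^p_{\alpha_1}\subset H(q,q,(\alpha_1+2)/p-1/q)$. The only cosmetic difference is that the paper routes the diagonal step through Corollary~\ref{corCmubetaBergman1} and cites the Bergman embedding from \cite{ZZ}, whereas you invoke Theorem~\ref{main2} directly and obtain the same embedding from (\ref{incluBergman}) and (\ref{incluq}).
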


\begin{proof}
If we assume  that $\mu$ is an $s$-Carleson measure, using Corollary \ref{corCmubetaBergman1} with $s=\beta+ \frac{\alpha_1+2}{p}-\frac{\alpha_2+2}{q}$  we know that
\[
\mathcal C_{\mu,\beta}\,: A^q_{\alpha_2+q\left( \frac{\alpha_1+2}{p}-\frac{\alpha_2+2}{q}\right)}\to A^q_{\alpha_2}.
\]
Moreover, using the inclusions of Bergman spaces ($0< p_1\leq p_2$,  $A^{p_1}_{\alpha_1}\subset A^{p_2}_{\alpha_2}$ if and only if  $\frac{\alpha_1+2}{p_1}\leq \frac{\alpha_2+2}{p_2}$, see \cite[Theorem 69]{ZZ}) we get
\[
A^p_{\alpha_1} \subset A^q_{\alpha_2+q\left( \frac{\alpha_1+2}{p}-\frac{\alpha_2+2}{q}\right)}
\]
then, $\mathcal C_{\mu,\beta}$ maps $A^p_{\alpha_1}$ into $A^q_{\alpha_2}$.

Conversely, we can apply Lemma \ref{lemasufcondcarleson} since $\mathcal C_{\mu,\beta}$ maps $H(p,p,\gamma_1)$ into $H(q,q,\gamma_2)$ with $\gamma_1=\frac{1+\alpha_1}{p}$ and $\gamma_2=\frac{1+\alpha_2}{q}$ to obtain that $\mu$ is $s$-Carleson.
\end{proof}

If $\alpha=\alpha_1=\alpha_2$ in the above result we obtain   \cite[Theorem 2]{GSZ}.

\begin{cor}
$\mathcal C_{\mu,\beta}$ maps $A^p_\alpha$ into $A^q_\alpha$ for $1\le p\le q<\infty$ and $\alpha >-1$ if and only if $\mu$ is an $s$-Carleson measure where
$$s=\beta+ (2+\alpha)(\frac{1}{p}-\frac{1}{q}).$$
\end{cor}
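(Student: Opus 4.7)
This final corollary is essentially the specialization $\alpha_1=\alpha_2=\alpha$ of the preceding Corollary \ref{corCmubetaBS}, so my plan is simply to verify that the hypotheses there are met and to compute the resulting exponent $s$.

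First, I would check the auxiliary assumption of Corollary \ref{corCmubetaBS}, namely $\tfrac{\alpha_1+2}{p}-\tfrac{\alpha_2+2}{q}+\beta>0$. With $\alpha_1=\alpha_2=\alpha>-1$ this becomes $(\alpha+2)\bigl(\tfrac{1}{p}-\tfrac{1}{q}\bigr)+\beta$, which is strictly positive since $p\le q$ forces $\tfrac{1}{p}-\tfrac{1}{q}\ge 0$, $\alpha+2>0$, and $\beta>0$. So the previous corollary applies without extra hypotheses.

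Next, I would just substitute $\alpha_1=\alpha_2=\alpha$ into the formula for $s$ provided by that corollary:
\[
s=\beta+\frac{\alpha_1+2}{p}-\frac{\alpha_2+2}{q}=\beta+(\alpha+2)\left(\frac{1}{p}-\frac{1}{q}\right).
\]
Thus the equivalence $\mathcal{C}_{\mu,\beta}:A^p_\alpha\to A^q_\alpha$ bounded $\Longleftrightarrow$ $\mu$ is $s$-Carleson follows immediately from Corollary \ref{corCmubetaBS}.

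There is no real obstacle here since the work has already been done: the forward direction in Corollary \ref{corCmubetaBS} uses the inclusion between Bergman spaces from \cite[Theorem 69]{ZZ} together with Corollary \ref{corCmubetaBergman1}, and the converse uses Lemma \ref{lemasufcondcarleson} applied to $H(p,p,\tfrac{1+\alpha}{p})=A^p_\alpha$ and $H(q,q,\tfrac{1+\alpha}{q})=A^q_\alpha$. Both reductions go through verbatim in this diagonal case. Hence the entire proof reduces to invoking Corollary \ref{corCmubetaBS} and simplifying the arithmetic expression for $s$.
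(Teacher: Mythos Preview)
Your proposal is correct and matches the paper's approach exactly: the paper does not even write out a proof but simply remarks that setting $\alpha_1=\alpha_2=\alpha$ in Corollary~\ref{corCmubetaBS} recovers \cite[Theorem~2]{GSZ}. Your additional verification that the hypothesis $\frac{\alpha+2}{p}-\frac{\alpha+2}{q}+\beta>0$ is automatic when $p\le q$, $\alpha>-1$, and $\beta>0$ is a helpful detail the paper leaves implicit.
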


Let us now try to generalize Theorem \ref{main2} for different values of the parameters.

We shall use the following result which is interesting in its own right.

\begin{lem} \label{lema1} Let $\gamma, s >0$,  $1\le p_1\le p_2$  and $\beta>\frac{1}{p_1}-\frac{1}{p_2}$. If  $\mu$ is $s$-Carleson then
\begin{equation}\label{estima1} M_{p_1}(\mathcal C_{\mu,\beta}(f),r)\lesssim \int_0^ 1\frac{(1-t)^{s-1}}{ (1-rt)^{\beta-(1/p_1-1/p_2)}} M_{p_2}(f,rt)dt.\end{equation}

\end{lem}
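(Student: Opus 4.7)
The plan is to reduce the bound on $M_{p_1}(\mathcal{C}_{\mu,\beta}f,r)$ by moving $M_{p_1}$ inside the $\mu$-integral via Minkowski, splitting the integrand by H\"older to separate $f(tz)$ from the singular kernel $(1-tz)^{-\beta}$, applying the standard size estimate on $M_{p_3}(K_{\beta-1},\cdot)$ from Lemma~\ref{fact}, and finally converting $d\mu(t)$ into the continuous weight $(1-t)^{s-1}\,dt$ using the $s$-Carleson hypothesis.

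First, apply Minkowski's integral inequality in $L^{p_1}(d\theta/2\pi)$ to the defining integral of $\mathcal{C}_{\mu,\beta}f(re^{i\theta})$, and split the integrand by H\"older with exponents $1/p_1=1/p_2+1/p_3$. The assumption $p_1\le p_2$ forces $p_3\in[1,\infty]$, and the assumption $\beta>1/p_1-1/p_2=1/p_3$ is exactly the range in which Lemma~\ref{fact} gives $M_{p_3}(K_{\beta-1},rt)\lesssim(1-rt)^{-(\beta-(1/p_1-1/p_2))}$. Combining these ingredients with the identity $M_q(h(t\,\cdot\,),r)=M_q(h,rt)$ produces
\[
M_{p_1}(\mathcal{C}_{\mu,\beta}f,r)\lesssim \int_0^1 \frac{M_{p_2}(f,rt)}{(1-rt)^{\beta-(1/p_1-1/p_2)}}\,d\mu(t).
\]

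The remaining and principal step is to replace $d\mu(t)$ by $(1-t)^{s-1}\,dt$. Set $g(t)=M_{p_2}(f,rt)/(1-rt)^{\beta-(1/p_1-1/p_2)}$, which is non-negative, non-decreasing and bounded on $[0,1)$ for fixed $r<1$. Writing $g(t)=g(0)+\int_{(0,t]}dg$ and applying Fubini,
\[
\int_0^1 g(t)\,d\mu(t)=g(0)\mu([0,1))+\int_{(0,1)}\mu([u,1))\,dg(u).
\]
The Carleson bounds $\mu([0,1))\lesssim 1$ and $\mu([u,1))\lesssim(1-u)^s$ reduce the right side to $g(0)+\int_{(0,1)}(1-u)^s\,dg(u)$, up to an implicit constant. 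A Lebesgue--Stieltjes integration by parts then transforms $\int_{(0,1)}(1-u)^s\,dg(u)$ into $-g(0)+s\int_0^1 g(u)(1-u)^{s-1}\,du$: the boundary term at $u=1^-$ vanishes because $g$ is bounded there, while the term at $u=0$ produces $-g(0)$ and cancels the stray $g(0)$ contribution exactly, yielding $\int_0^1 g\,d\mu\lesssim\int_0^1 g(u)(1-u)^{s-1}\,du$, which is the desired inequality.

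The main technical obstacle is precisely this last conversion step, where the exact cancellation of the $g(0)$ boundary terms depends on the monotonicity of $g$ (so that $dg$ is a well-defined non-negative Borel measure on $(0,1)$) and on the boundedness of $g$ near $t=1^-$. Both properties follow from fixing $r<1$ and from the monotonicity of $\rho\mapsto M_{p_2}(f,\rho)$, so that $g$ is controlled by its value at $t=1^-$, namely $M_{p_2}(f,r)(1-r)^{-(\beta-(1/p_1-1/p_2))}<\infty$.
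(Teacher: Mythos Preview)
Your proof is correct and follows a genuinely different, more elementary route than the paper's. The paper first linearizes $M_{p_1}$ via duality with a test function $\phi\in L^{p_1'}$, then performs a dyadic decomposition $[0,1)=\bigcup_k I_k$ with $I_k=[1-2^{-k},1-2^{-k-1})$, bounds the supremum over each $I_k$ by the Poisson (radial) maximal function $P^*(fK_{\beta-1})$, applies the maximal inequality $M_{p_1}(P^*h,\rho)\lesssim M_{p_1}(h,\rho)$, and only then uses H\"older; finally it converts the dyadic sum $\sum_k |I_k|^s\,M_{p_1}(fK_{\beta-1},rt_{k+1})$ back into the integral $\int_0^1(1-t)^{s-1}M_{p_1}(fK_{\beta-1},rt)\,dt$ using that integral means are increasing. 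You instead bypass all of this by applying Minkowski first (legal since $p_1\ge 1$), then H\"older, arriving directly at $\int_0^1 g(t)\,d\mu(t)$ with $g$ non-decreasing and bounded; the Carleson condition is then exploited through the layer--cake identity $\int g\,d\mu=g(0)\mu([0,1))+\int\mu([u,1))\,dg(u)$ and a single Stieltjes integration by parts. Your argument avoids the maximal function entirely and gives a cleaner derivation of the same inequality; the paper's dyadic/maximal approach is more hands-on and perhaps extends more readily to settings where monotonicity of the integrand is not available, but for the present lemma your route is shorter. One cosmetic remark: the exact cancellation of the $g(0)$ terms you highlight depends on using the \emph{same} Carleson constant for $\mu([0,1))$ and $\mu([u,1))$, which is legitimate since the $s$-Carleson bound at $u=0$ already gives $\mu([0,1))\le C$; even without this, the leftover $g(0)$ is trivially absorbed since $g(0)\le s\int_0^1(1-u)^{s-1}g(u)\,du$ by monotonicity.
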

\begin{proof}
Let $1/{p_1}=1/p_2+1/p_3$, since $\beta p_3>1$ we get $M_{p_3}(K_{\beta-1},r)\lesssim \frac{1}{(1-r)^{\beta-1/p_3}}$. Now, we consider $P^*(f)(z)=\sup_{0<t<1}|f(tz)|$ the Poisson maximal function of $f$ and $I_k=[t_k, t_{k+1})$ where $t_k=1-2^{-k}$. Hence, we have that $\mu(I_k)\lesssim(1-t_k)^s$ and therefore, selecting $0\le \phi\in L^{p_1'}$ to attains the $L^{p_1}([0,2\pi))$ norm, we get the following chain of inequalities
\ba
M_{p_1}(\mathcal C_{\mu,\beta}(f),r)&\le & (\int_0^{2\pi}(\int_0^1 |f(rte^{i\theta})||K_{\beta-1}(rte^{i\theta})|d\mu(t))^{p_1} d\theta)^{1/p_1}\\
&\lesssim & \int_0^{2\pi}(\int_0^1 |f(rte^{i\theta})||K_{\beta-1}(rte^{i\theta})|d\mu(t)) \phi(e^{i\theta})d\theta\\
&= & \int_0^{2\pi}(\sum_{k}\int_{I_k} |f(rte^{i\theta})||K_{\beta-1}(rte^{i\theta})|d\mu(t)) \phi(e^{i\theta}) d\theta\\
&\le & \int_0^{2\pi}(\sum_{k}\sup_{t\in I_k}|f(rte^{i\theta})||K_{\beta-1}(rte^{i\theta})|\mu(I_k)) \phi(e^{i\theta})d\theta\\
&\lesssim & \sum_{k} |I_k|^s \int_0^{2\pi}\sup_{t\le t_{k+1}}|f(rte^{i\theta})||K_{\beta-1}(rte^{i\theta})| \phi(e^{i\theta})d\theta\\
&= & \sum_{k} |I_k|^s \int_0^{2\pi}|P^*(fK_{\beta-1})(rt_{k+1}e^{i\theta})|\phi(e^{i\theta}) d\theta\\
&\lesssim & \sum_{k} |I_k|^s M_{p_1}(P^*(fK_{\beta-1}),rt_{k+1}) \\
&\lesssim & \sum_{k} |I_k|^s M_{p_1}(f K_{\beta-1},rt_{k+1}) \\
&\lesssim & \sum_{k} \int_{I_{k+1}} (1-t)^{s-1} M_{p_1}(f K_{\beta-1},rt) dt \\
&\le & \sum_{k} \int_{I_{k+1}} (1-t)^{s-1} M_{p_2}(f, rt) M_{p_3}(K_{\beta-1},rt)dt \\
&\lesssim &  \int_{0}^1 \frac{(1-t)^{s-1}}{ (1-rt)^{\beta-1/p_3}} M_{p_2}(f, rt)dt. \\
\end{eqnarray*}
\end{proof}

\begin{thm} \label{h1q} Let $1\le p_2\le p_1<\infty$, $0<q_1\le q_2<\infty$, $\gamma_1<\gamma_2$ and $$s=\beta+\gamma_1-\gamma_2+1/p_1-1/p_2>0.$$ Then
$\mathcal C_{\mu,\beta}:H(p_1,q_1,\gamma_1)\to H(p_2,q_2,\gamma_2)$ is bounded if and only if $\mu$ is a $s$-Carleson measure.
\end{thm}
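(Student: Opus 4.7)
The necessity direction is immediate from Lemma~\ref{lemasufcondcarleson} applied with the stated parameters.

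For the sufficiency, assume $\mu$ is $s$-Carleson. The plan is to establish the stronger statement $\mathcal C_{\mu,\beta}(H(p_1,q_1,\gamma_1))\subset H(p_2,q_1,\gamma_2)$; since $q_1\le q_2$, the inclusion \eqref{incluq} then upgrades this to the target $H(p_2,q_2,\gamma_2)$. To get the intermediate statement I will work with the $N$-th fractional derivative
\[
D_N\mathcal C_{\mu,\beta}(f)=D_N F_\mu* fK_{\beta-1}
\]
for $N$ a large positive integer and appeal to Lemma~\ref{derivMNS} to transfer the conclusion back to $\mathcal C_{\mu,\beta}(f)$.

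The core of the argument is to choose an auxiliary exponent $p_3\in[1,\infty)$ with
\[
\max\Big\{\tfrac{1}{p_2}-\tfrac{1}{p_1},\,0\Big\}\le \tfrac{1}{p_3}\le \tfrac{1}{p_1'},\qquad \tfrac{1}{p_3}<\beta,
\]
and to set $1/p_*=1/p_1+1/p_3$ and $1/q_*=1+1/p_2-1/p_*$. With this, Lemma~\ref{fact} provides $K_{\beta-1}\in H(p_3,\infty,\beta-1/p_3)$, Lemma~\ref{lemaprod} yields $fK_{\beta-1}\in H(p_*,q_1,\gamma_1+\beta-1/p_3)$, and Theorem~\ref{carlesonfmu}(ii) places $D_NF_\mu\in H(q_*,\infty,N+1/q_*'-s)$ for $N$ large enough. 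The numerical conditions on $p_3$ are tailored so that $p_*,q_*\ge 1$ and $1/p_*+1/q_*=1+1/p_2\ge 1$, matching the hypotheses of Lemma~\ref{lemacon}.

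Applying Lemma~\ref{lemacon} to the Hadamard product $D_NF_\mu* fK_{\beta-1}$ then places it in $H(p_2,q_1,\Gamma)$ with
\[
\Gamma=\Big(\gamma_1+\beta-\tfrac{1}{p_3}\Big)+\Big(N+\tfrac{1}{q_*'}-s\Big),
\]
and a direct substitution using $1/q_*'=1/p_1+1/p_3-1/p_2$ together with the definition of $s$ simplifies $\Gamma$ to $\gamma_2+N$. Lemma~\ref{derivMNS} then delivers $\mathcal C_{\mu,\beta}(f)\in H(p_2,q_1,\gamma_2)$, completing the proof after the final inclusion. The main obstacle is verifying that the admissible interval for $p_3$ is nonempty in every case: for $p_2>1$ this is automatic, but in the boundary case $p_2=1$ the interval collapses to the single point $p_3=p_1'$, and here the strict inequality $\gamma_1<\gamma_2$ combined with $s>0$ is precisely what delivers $1/p_1'<\beta$ and allows the argument to proceed.
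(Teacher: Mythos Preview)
Your proof is correct; the one cosmetic fix is to allow $p_3=\infty$ so that the boundary case $p_1=p_2=1$ (where $p_1'=\infty$) is included---all the lemmas you invoke accommodate this value without change.

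Your route, however, differs from the paper's. The paper establishes sufficiency via Lemma~\ref{lema1}, a pointwise integral-mean estimate derived from the Poisson maximal function and a dyadic decomposition of $[0,1)$ adapted to the $s$-Carleson condition; combined with $\gamma_1<\gamma_2$ this yields $(1-r)^{\gamma_2}M_{p_2}(\mathcal C_{\mu,\beta}f,r)\lesssim (1-r)^{\gamma_1}M_{p_1}(f,r)$ directly, after which integration in $L^q(\frac{dr}{1-r})$ finishes. You instead stay entirely within the Hadamard-product toolbox (Lemmas~\ref{fact}, \ref{lemaprod}, \ref{lemacon}, \ref{derivMNS} and Theorem~\ref{carlesonfmu}), extending the argument of Theorem~\ref{main2}~(ii)$\Rightarrow$(iv) to the case $p_1\ne p_2$ by inserting the auxiliary index $p_3$. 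The paper's approach delivers a stronger radius-by-radius bound and makes the role of $\gamma_1<\gamma_2$ transparent; your approach avoids Lemma~\ref{lema1} altogether and uses $\gamma_1<\gamma_2$ only to secure $\beta>1/p_2-1/p_1$, so that the same argument (with that inequality assumed directly) simultaneously handles the sufficiency in Theorem~\ref{h1q2}, and indeed---because you pass to $D_N$ with $N$ large rather than $D_\beta$---without needing the extra constraint $\gamma_1-(1/p_2-1/p_1)<\gamma_2$ imposed there.
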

\begin{proof}  If we assume  $\mathcal C_{\mu,\beta}\Big(H(p_1,q_1,\gamma_1)\Big)\subseteq  H(p_2,q_2,\gamma_2)$,  applying Lemma \ref{lemasufcondcarleson},  we get that $\mu$ is an $s$-Carleson measure.

Conversely, assume that  $\mu$ is an $s$-Carleson measure. 
We invoke Lemma \ref{lema1} to have
$$M_{p_2}(\mathcal C_{\mu,\beta} f,r)\lesssim \int_0^1 \frac{(1-t)^{s-1}}{(1-rt)^{s+\gamma_2-\gamma_1}}M_{p_1}(f,rt) dt.$$

Let $0<q<\infty$ and $f\in H(p_1,q,\gamma_1)$. Since $\gamma_2 > \gamma_1$ then the $s$-Carleson condition gives
$$
M_{p_2}(\mathcal C_{\mu,\beta} f,r)\lesssim  \frac{M_{p_1}(f,r)}{(1-r)^{\gamma_2-\gamma_1}}.
$$

This shows that $(1-r)^{\gamma_2}M_{p_2}(\mathcal C_{\mu,\beta} f,r)\lesssim (1-r)^{\gamma_1}M_{p_1}(f,r)$ and integrating over $L^q(\frac{dr}{1-r})$ gives $\mathcal C_{\mu,\beta} f\in H(p_2,q, \gamma_2)$. Now, using the inclusion $H(p,q_1,\gamma)\subset H(p,q_2,\gamma)$ for $q_1\le q_2$ the result is complete.
\end{proof}

Let us now use a different argument which allows to obtain a condition less restrictive than $\gamma_2>\gamma_1$  in the above theorem.

\begin{thm} \label{h1q2} Let $1\le p_2\le p_1<\infty$, $0<q_1\le q_2<\infty$ and $\beta >1/p_2-1/p_1$.  Assume that $$0<\gamma_1-(1/p_2-1/p_1)<\gamma_2 \hbox{ and } s=\beta+\gamma_1-\gamma_2+1/p_1-1/p_2>0.$$ Then
$\mathcal C_{\mu,\beta}:H(p_1,q_1,\gamma_1)\to H(p_2,q_2,\gamma_2)$ is bounded if and only if $\mu$ is an $s$-Carleson measure.
\end{thm}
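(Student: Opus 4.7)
The necessity follows directly from Lemma \ref{lemasufcondcarleson}: the boundedness of $\mathcal C_{\mu,\beta}$ between the given mixed norm spaces forces $\mu$ to be $s$-Carleson with the stated $s$. For the sufficiency, I would first use \eqref{incluq} to reduce to the case $q_1=q_2=q$, since the general case then follows by post-composing with the inclusion $H(p_2,q_1,\gamma_2)\subset H(p_2,q_2,\gamma_2)$. Set $a=1/p_2-1/p_1\ge 0$. The hypothesis $\beta>a$ together with the algebraic identity $\beta-a=s+\gamma_2-\gamma_1$ shows that $s+\gamma_2-\gamma_1>0$. Lemma \ref{lema1}, applied with the labels $p_1,p_2$ interchanged to match our convention $p_2\le p_1$, then yields the pointwise bound
\[
M_{p_2}(\mathcal C_{\mu,\beta}f, r)\lesssim \int_0^1 \frac{(1-t)^{s-1}}{(1-rt)^{s+\gamma_2-\gamma_1}}M_{p_1}(f, rt)\,dt.
\]
Unlike in the proof of Theorem \ref{h1q}, the crude replacement $M_{p_1}(f,rt)\le M_{p_1}(f,r)$ is no longer enough, because in the present regime $\gamma_2$ may be strictly smaller than $\gamma_1$.

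The decisive move is to absorb the weight into the kernel. Introduce $\Phi(\rho)=(1-\rho)^{\gamma_1}M_{p_1}(f,\rho)$, so that $f\in H(p_1,q,\gamma_1)$ is equivalent to $\Phi\in L^q((1-r)^{-1}dr)$. After the substitution $\rho=rt$ and multiplication by $(1-r)^{\gamma_2}$, the required bound reduces to the boundedness on $L^q((1-r)^{-1}dr)$ of the integral operator
\[
T\Phi(r)=\int_0^r \frac{(1-r)^{\gamma_2}(r-\rho)^{s-1}}{(1-\rho)^{s+\gamma_2}}\Phi(\rho)\,d\rho.
\]
Under the logarithmic change of variables $r=1-e^{-x}$ and $\rho=1-e^{-y}$, which transports $(1-r)^{-1}dr$ to Lebesgue measure on $(0,\infty)$, a short computation identifies $T\Phi(1-e^{-x})$ with the convolution $(h*\tilde\Phi)(x)$ on $\mathbb R$, where $\tilde\Phi(y)=\Phi(1-e^{-y})\mathbf 1_{y\ge 0}$ and
\[
h(u)=e^{-\gamma_2 u}(1-e^{-u})^{s-1}\mathbf 1_{u\ge 0}.
\]
Because $h(u)\sim u^{s-1}$ near $0$ (with $s>0$) and decays exponentially at infinity (with $\gamma_2>0$), one has $h\in L^1(\mathbb R)$. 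For $1\le q<\infty$, Young's convolution inequality then immediately gives $\|T\Phi\|_{L^q}\le \|h\|_{L^1}\|\tilde\Phi\|_{L^q}$, which is exactly the required bound.

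The main technical obstacle is the quasi-Banach range $0<q<1$, in which Young's inequality is no longer available. For that range I would split $h=\sum_n h_n$ dyadically with $h_n$ supported in $[n,n+1)$, use the uniform bound $\|h_n\|_\infty\lesssim e^{-\gamma_2 n}$ together with the $q$-subadditivity $(a+b)^q\le a^q+b^q$, and exploit the monotonicity of $M_{p_1}(f,\cdot)$ to control each piece $h_n*\tilde\Phi$ and reassemble the contributions in $\ell^q$. Making this dyadic summation rigorous, so that the $\ell^q$-norm of the geometrically decaying sequence of local convolutions is dominated by $\|\tilde\Phi\|_{L^q}$, is the delicate part of the argument.
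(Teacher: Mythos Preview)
Your necessity argument matches the paper's. For sufficiency, however, the paper takes a completely different and much shorter route: it invokes the factorization \eqref{eq1}, $\mathcal C_{\mu,\beta}f=D_\beta F_\mu * \mathcal C^{\beta-1}f$, and then simply combines three already-proved facts. By Theorem~\ref{carlesonfmu} (with $\alpha=\beta$, $p=1$) the $s$-Carleson hypothesis gives $D_\beta F_\mu\in H(1,\infty,\gamma_2-\gamma_1+1/p_2-1/p_1)$; by Theorem~\ref{cbeta} the assumptions $\beta,\gamma_1>1/p_2-1/p_1$ give $\mathcal C^{\beta-1}:H(p_1,q_1,\gamma_1)\to H(p_2,q_2,\gamma_1+1/p_1-1/p_2)$; and Lemma~\ref{lemacon} then puts the Hadamard product in $H(p_2,q_2,\gamma_2)$. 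This handles all $0<q<\infty$ in one stroke, with no kernel estimates and no case split.

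Your route via Lemma~\ref{lema1} and the logarithmic change of variables is correct and rather elegant for $1\le q<\infty$: the convolution kernel $h(u)=e^{-\gamma_2 u}(1-e^{-u})^{s-1}\mathbf 1_{u\ge 0}$ is indeed in $L^1$, and Young's inequality closes the argument. What you gain is a pointwise kernel-level understanding of the operator that the paper's abstract factorization hides. What you lose is brevity, and a clean treatment of $0<q<1$. On that last point your sketch has a concrete gap: when $s<1$ the piece $h_0=h\mathbf 1_{[0,1)}$ is unbounded, so the claimed bound $\|h_n\|_\infty\lesssim e^{-\gamma_2 n}$ fails at $n=0$. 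The fix is exactly the monotonicity of $M_{p_1}(f,\cdot)$ you allude to: on the range $y\in[x-1,x]$ one has $M_{p_1}(f,1-e^{-y})\le M_{p_1}(f,1-e^{-x})$ and $e^{-\gamma_1 y}\le e^{\gamma_1}e^{-\gamma_1 x}$, whence $(h_0*\tilde\Phi)(x)\le e^{\gamma_1}\|h\|_{L^1}\,\tilde\Phi(x)$ pointwise, and similarly $(h_n*\tilde\Phi)(x)\lesssim e^{-\gamma_2 n}\tilde\Phi(x-n)$ for $n\ge 1$; $q$-subadditivity then finishes as you indicate. So your program can be completed, but the paper's three-line argument via \eqref{eq1} is the intended and far more economical proof.
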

\begin{proof} Only the converse needs a proof, since the direct implication was given in Theorem \ref{h1q}.
Assume $\mu$ is $s$-Carleson. On the one hand, using Theorem \ref{carlesonfmu} we have that  $D_\beta F_\mu\in H(1,\infty, \gamma_2-\gamma_1+1/p_2-1/p_1)$. On the other hand, due to Theorem \ref{cbeta}, $\mathcal C^{\beta-1}f\in H(p_2,q_2, \gamma_1+1/p_1-1/p_2)$ for any $f\in H(p_1,q_1,\gamma_1)$ . Hence by Lemma \ref{lemacon}, $\mathcal C_{\mu,\beta}f=D_\beta F_\mu*\mathcal C^{\beta-1} f\in H(p_2,q_2,\gamma_2)$ for any $f\in H(p_1,q_1,\gamma_1).$
\end{proof}

\begin{cor} Let $p\ge 1$, $0<s<1/p$ and $\mu$ a positive Borel measure. The following are equivalent.

(i) $\mu$ is an $s$-Carleson measure.

(ii) $\mathcal C_\mu: H(p,q,\gamma)\to H(1,q,\gamma+1/p-s)$ is bounded for some $0<q,\gamma<\infty$.
\end{cor}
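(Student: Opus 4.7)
The plan is to recognize that this corollary is essentially a specialization of Theorem \ref{h1q2} (for $(i)\Rightarrow(ii)$) and Lemma \ref{lemasufcondcarleson} (for the converse), with parameters $\beta=1$, $p_1=p$, $p_2=1$, $q_1=q_2=q$, $\gamma_1=\gamma$, and $\gamma_2=\gamma+1/p-s$. The key identity is
\[
\beta+\gamma_1-\gamma_2+\tfrac{1}{p_1}-\tfrac{1}{p_2}=1+\gamma-(\gamma+\tfrac{1}{p}-s)+\tfrac{1}{p}-1=s,
\]
so the $s$-Carleson condition in the general statement matches the $s$ in the corollary exactly.

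For $(i)\Rightarrow(ii)$, I would select $0<q<\infty$ and $\gamma>1/p'$ (possible since the statement only requires existence of such parameters), and then verify the hypotheses of Theorem \ref{h1q2}: the ordering $1\le p_2\le p_1<\infty$ is immediate; the condition $\beta>1/p_2-1/p_1$ becomes $1>1-1/p$, which holds for $p\ge 1$; and the compound inequality $0<\gamma_1-(1/p_2-1/p_1)<\gamma_2$ reads $0<\gamma-(1-1/p)<\gamma+1/p-s$, whose left part is exactly our choice $\gamma>1/p'$, and whose right part simplifies to $s<1$, which holds because $s<1/p\le 1$. Also $\gamma_2=\gamma+1/p-s>0$ since $s<1/p$. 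Theorem \ref{h1q2} then gives the boundedness of $\mathcal C_\mu$ on the claimed spaces.

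For $(ii)\Rightarrow(i)$, I would simply invoke Lemma \ref{lemasufcondcarleson} with the same parameter choice: the hypothesis $s=\beta+\gamma_1-\gamma_2+1/p_1-1/p_2>0$ is automatic from the identity above together with $s>0$, and $\gamma_2=\gamma+1/p-s>0$ since $s<1/p$. The conclusion is that $\mu$ is an $s$-Carleson measure.

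No substantial obstacle arises here; the content is purely bookkeeping to ensure the parameters fit into the hypotheses of the two previously established results. The only mild subtlety is that Theorem \ref{h1q2} imposes the constraint $\gamma>1/p'$, but this is harmless because $(ii)$ is an existence statement over $\gamma$.
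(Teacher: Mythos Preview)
Your proposal is correct and matches the paper's intended approach: the corollary is stated immediately after Theorem \ref{h1q2} without proof, precisely because it is the specialization $\beta=1$, $p_1=p$, $p_2=1$, $q_1=q_2=q$, $\gamma_1=\gamma$, $\gamma_2=\gamma+1/p-s$ that you describe. Your observation that one must choose $\gamma>1/p'$ to meet the hypothesis of Theorem \ref{h1q2} for the direction $(i)\Rightarrow(ii)$, and that for $(ii)\Rightarrow(i)$ one falls back on Lemma \ref{lemasufcondcarleson} (since the given $\gamma$ need not satisfy that constraint), is exactly the right bookkeeping.
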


\section{When Carleson-Type Condition is not suitable}

We have seen  in Theorem \ref{main2} that $\mathcal C_{\mu,\beta} \Big(H(p,q_1,\gamma)\Big)\subset H(p,q_2,\gamma)$ for  $q_1\le q_2$ is actually equivalent to  the $\beta$-Carleson condition of $\mu$.
We finally analyze extra conditions on $\mu$ and $\beta$ to get the inclusion $\mathcal C_{\mu,\beta} \Big(H(p,q_1,\gamma)\Big)\subset H(p,q_2,\gamma)$ for  $q_1> q_2$. 

\begin{prop} \label{p1}
Let $1\le p\le \infty$, $\gamma>0$, $\beta>1/p'$,  $0< q<\infty$ and $\mu$ a positive Borel measure. Then
   $\mathcal C_{\mu,\beta}\Big(H(p,\infty,\gamma)\Big)\subseteq H(p,q,\gamma)$ if and only if $D_{\gamma+\beta-\frac{1}{p'}}F_\mu\in H(p,q,\gamma)$.
\end{prop}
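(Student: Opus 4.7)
The plan centers on the integral identity
\[
\mathcal C_{\mu,\beta}K_{\gamma+1/p-1}(z) = \int_0^1\frac{d\mu(t)}{(1-tz)^{\gamma+\beta+1/p}} = D_{\gamma+\beta-1/p'}F_\mu(z),
\]
which follows by direct substitution combined with $1/p-1 = -1/p'$, and identifies $D_{\gamma+\beta-1/p'}F_\mu$ as the image under $\mathcal C_{\mu,\beta}$ of the extremal element $K_{\gamma+1/p-1}(z) = (1-z)^{-(\gamma+1/p)}$ of $H(p,\infty,\gamma)$, which belongs to the space by the borderline case of Lemma \ref{fact}. The ``only if'' direction is then immediate: boundedness of $\mathcal C_{\mu,\beta}\colon H(p,\infty,\gamma)\to H(p,q,\gamma)$ applied to this test function forces $D_{\gamma+\beta-1/p'}F_\mu\in H(p,q,\gamma)$.

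For the ``if'' direction, I would exploit the Hadamard-convolution factorization
\[
\mathcal C_{\mu,\beta}f = F_\mu * (fK_{\beta-1}) = D_\alpha F_\mu * I_\alpha(fK_{\beta-1}),\qquad \alpha := \gamma+\beta-1/p',
\]
obtained by inserting $F_\mu = G_\alpha * D_\alpha F_\mu$ and using associativity of the Hadamard product. An application of Lemma \ref{lemacon} with exponents $p_1 = p$, $p_2 = 1$, $p_3 = p$ and matching $q$ and $\gamma$ parameters then reduces the sufficiency to the uniform estimate $\sup_{0<r<1} M_1(I_\alpha(fK_{\beta-1}), r) \lesssim \|f\|_{(p,\infty,\gamma)}$.

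To prove this uniform bound, I would apply H\"older's inequality on the circle with conjugate exponents $(p,p')$. The standing assumption $\beta > 1/p'$ places $K_{\beta-1}$ in $H(p',\infty,\beta-1/p')$ by Lemma \ref{fact}, yielding
\[
M_1(fK_{\beta-1},s)\le M_p(f,s)\,M_{p'}(K_{\beta-1},s)\lesssim \frac{\|f\|_{(p,\infty,\gamma)}}{(1-s)^\alpha}.
\]
Substituting into $I_\alpha(fK_{\beta-1})(z) = \alpha\int_0^1 (1-s)^{\alpha-1}\,(fK_{\beta-1})(sz)\,ds$ and taking $M_1$ reduces matters to controlling the boundary-type integral $\int_0^1 (1-s)^{\alpha-1}(1-sr)^{-\alpha}\,ds$.

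The main obstacle is that this last integral sits exactly on the borderline of uniform $r$-boundedness, exhibiting a logarithmic blow-up $O(\log(1/(1-r)))$ as $r\to 1^-$. The key step to close the argument is to absorb this logarithmic factor---for instance, by invoking the kernel-shifting identity $\mathcal C_{\mu,\beta}f = \mathcal C_{\mu,\beta-\epsilon}(fK_{\epsilon-1})$ for small $\epsilon>0$ to open a strictly positive gap in the integral exponent, or alternatively by routing through the factorization $\mathcal C_{\mu,\beta}f = D_\beta F_\mu * \mathcal C^{\beta-1}f$ and exploiting the boundedness of the weighted Ces\`aro operator $\mathcal C^{\beta-1}$ on $H(p,\infty,\gamma)$ established in Theorem \ref{cbeta}, thereby bypassing the borderline behaviour by shifting it onto a step that has already been controlled.
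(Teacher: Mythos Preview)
Your ``only if'' direction is correct and identical to the paper's: apply $\mathcal C_{\mu,\beta}$ to $K_{\gamma-1/p'}\in H(p,\infty,\gamma)$.

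The ``if'' direction has a genuine gap that your workarounds do not close. The kernel-shifting identity $\mathcal C_{\mu,\beta}f=\mathcal C_{\mu,\beta-\epsilon}(fK_{\epsilon-1})$ is vacuous here: since $(fK_{\epsilon-1})K_{\beta-\epsilon-1}=fK_{\beta-1}$ and the relevant index $\alpha=(\gamma+\epsilon)+(\beta-\epsilon)-1/p'$ is unchanged, you face exactly the same borderline integral. The second route through $\mathcal C^{\beta-1}$ requires $\mathcal C^{\beta-1}f\in H(1,\infty,\gamma-1/p')$, which (via Theorem~\ref{cbeta}) needs the extra hypothesis $\gamma>1/p'$, not assumed in the proposition.

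The paper avoids the borderline altogether by a one-step rearrangement of your factorization. Instead of pushing $I_\alpha$ onto $fK_{\beta-1}$ and aiming for $H^1$, leave $fK_{\beta-1}$ untouched: from $f\in H(p,\infty,\gamma)$ and $K_{\beta-1}\in H(p',\infty,\beta-1/p')$ one has $fK_{\beta-1}\in H(1,\infty,\alpha)$ with $\alpha=\gamma+\beta-1/p'>0$. Now Lemma~\ref{lemacon} applied to $D_\alpha F_\mu\in H(p,q,\gamma)$ and $fK_{\beta-1}\in H(1,\infty,\alpha)$ gives
\[
D_\alpha F_\mu * fK_{\beta-1}\in H(p,q,\gamma+\alpha).
\]
Since $D_\alpha F_\mu * fK_{\beta-1}=D_\alpha\bigl(F_\mu * fK_{\beta-1}\bigr)=D_\alpha\mathcal C_{\mu,\beta}f$, Lemma~\ref{derivMNS} immediately yields $\mathcal C_{\mu,\beta}f\in H(p,q,\gamma)$. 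The point is that applying the fractional integral \emph{after} the convolution (via Lemma~\ref{derivMNS}) sits strictly inside the admissible range, whereas applying it \emph{before} (your $I_\alpha(fK_{\beta-1})$) lands on the boundary and produces the logarithmic loss.
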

\begin{proof} Assume that  $\mathcal C_{\mu,\beta}\Big(H(p,\infty,\gamma)\Big)\subseteq H(p,q,\gamma)$.
     Since $K_{\gamma-\frac{1}{p'}}\in H(p,\infty,\gamma) $  and 
     $\mathcal C_{\mu,\beta}( K_{\gamma-1/p'})= D_{\gamma+\beta-\frac{1}{p'}}F_\mu$ we obtain that
     $D_{\gamma+\beta-\frac{1}{p'}}F_\mu\in H(p,q,\gamma).$

     Conversely, assume  $D_{\gamma+\beta-\frac{1}{p'}}F_\mu\in H(p,q,\gamma)$ and $f\in H(p,\infty, \gamma)$. Since $fK_{\beta-1}\in H(1,\infty, \gamma+\beta-1/p')$, using Lemma \ref{lemacon}, we conclude that $D_{\gamma+\beta-\frac{1}{p'}} F_\mu* fK_{\beta-1}\in H(p,q,2\gamma+\beta-\frac{1}{p'})$ and therefore $\mathcal C_{\mu,\beta} f=F_\mu* fK_{\beta-1}\in H(p,q,\gamma)$.
\end{proof}

Note that the restriction $p'\beta> 1 $  in the above proposition does not apply when $p=1$. To conclude this section, we will show that this restriction can also be removed in other cases. We begin by studying the cases $p=2$ and $p=\infty$. The following lemma will be used in our analysis.

\begin{lem} \label{lemmaAbelSum}
    Let $\lbrace \gamma_n\rbrace_{n=1}^{\infty}, \lbrace s_n\rbrace_{n=0}^{\infty}$ be a decreasing and increasing sequence of positive numbers respectively. Assume that $s_n\le t_n$ for all $n$.
    Then
    $$\sum_{n=1}^{\infty} \gamma_n (s_n-s_{n-1})\le (t_0-s_0)\gamma_1 + \sum_{n=1}^{\infty}  \gamma_n (t_n-t_{n-1}). $$
\end{lem}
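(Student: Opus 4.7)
The plan is to apply summation by parts (Abel summation) to the difference between the two sides. The cleanest way to organise the computation is to introduce the auxiliary sequence $A_n=s_n-t_n$, which by hypothesis satisfies $A_n\le 0$ for every $n\ge 0$. The inequality to be established is then equivalent to
\[
\sum_{n=1}^{\infty}\gamma_n\bigl(A_n-A_{n-1}\bigr)\le -A_0\,\gamma_1.
\]

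First, I would work at the level of partial sums and write, via Abel summation,
\[
\sum_{n=1}^{N}\gamma_n\bigl(A_n-A_{n-1}\bigr)=\gamma_N A_N-\gamma_1 A_0+\sum_{n=1}^{N-1}(\gamma_n-\gamma_{n+1})\,A_n,
\]
which is just the rearrangement obtained by splitting the telescoping sum and re-indexing. Next, I would invoke the two hypotheses term by term: the decreasing character of $(\gamma_n)$ gives $\gamma_n-\gamma_{n+1}\ge 0$, while $A_n\le 0$ forces $(\gamma_n-\gamma_{n+1})A_n\le 0$ and likewise $\gamma_N A_N\le 0$. Therefore the identity above collapses to the uniform bound
\[
\sum_{n=1}^{N}\gamma_n\bigl(A_n-A_{n-1}\bigr)\le-\gamma_1 A_0=\gamma_1(t_0-s_0),
\]
valid for every $N\ge 1$.

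Finally, I would let $N\to\infty$ and unpack $A_n=s_n-t_n$ to rearrange the inequality into the stated form
\[
\sum_{n=1}^{\infty}\gamma_n(s_n-s_{n-1})\le(t_0-s_0)\gamma_1+\sum_{n=1}^{\infty}\gamma_n(t_n-t_{n-1}).
\]
The only mildly delicate point, and the one I expect to require a line of care rather than an actual obstacle, is that splitting the limit into two separate series presumes that at least one of them converges; if $\sum\gamma_n(t_n-t_{n-1})=+\infty$ the inequality is trivial, and otherwise the uniform partial-sum bound together with the monotonicity of $(s_n)$ (so that $\sum \gamma_n(s_n-s_{n-1})$ has nonnegative terms and hence a well-defined, possibly infinite, value) justifies passing to the limit. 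No deeper ingredient is needed beyond Abel summation and the sign information on $\gamma_n-\gamma_{n+1}$ and $A_n$.
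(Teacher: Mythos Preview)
Your proof is correct and follows essentially the same approach as the paper: Abel summation by parts followed by exploiting the sign of $\gamma_n-\gamma_{n+1}\ge 0$ together with $s_n\le t_n$. The only cosmetic difference is that the paper applies Abel summation to $\sum\gamma_n(s_n-s_{n-1})$ and $\sum\gamma_n(t_n-t_{n-1})$ separately (replacing $s_n$ by $t_n$ in the middle and then undoing the summation by parts), whereas you apply it once to the difference $A_n=s_n-t_n$; these are algebraically equivalent, and your handling of the convergence issue at the end is in fact slightly more careful than the paper's.
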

\begin{proof}
    From Abel's summation by parts twice, we obtain
    \begin{align*}
    \sum_{n=1}^{\infty} \gamma_n (s_n-s_{n-1})&=\lim_{N\to \infty} \sum_{n=1}^{N} \gamma_n (s_n-s_{n-1})\\
&    =\lim_{N\to \infty}
\gamma_{N+1}s_N-s_0\gamma_1+\sum_{n=1}^{N} s_n (\gamma_{n}-\gamma_{n+1})    \\
&\leq \lim_{N\to \infty}
\gamma_{N+1}t_N-s_0\gamma_1+\sum_{n=1}^{N} t_n(\gamma_{n}-\gamma_{n+1})    \\
&=(t_0-s_0)\gamma_1+\sum_{n=1}^{\infty} \gamma_n (t_n-t_{n-1}).
    \end{align*}
\end{proof}

\begin{prop}  \label{mainprop} Let $\mu$ be a positive Borel measure defined on $[0,1)$, $\beta,\gamma >0$ and $f\in \mathcal H(\D)$. Then, for each  $p\in\{2,\infty\}$ we have
\begin{equation}
    M_p(\mathcal C_{\mu,\beta} f,r)\lesssim  \|f\|_{(p,\infty,\gamma)}M_p(D_{\beta+\gamma-1/p'}F_\mu,r),\quad 0<r<1.
\end{equation}
    \end{prop}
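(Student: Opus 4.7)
The two cases are handled differently: $p=\infty$ reduces to a pointwise estimate, while $p=2$ uses Parseval together with the Abel summation Lemma \ref{lemmaAbelSum}. In both cases I exploit the identity $\mathcal{C}_{\mu,\beta}f = F_\mu *(fK_{\beta-1})$ from \eqref{eq0}.

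\textbf{Case $p=\infty$.} Here $1/p'=1$, so I want $M_\infty(\mathcal{C}_{\mu,\beta}f,r)\lesssim \|f\|_{(\infty,\infty,\gamma)}M_\infty(D_{\beta+\gamma-1}F_\mu,r)$. Given $f\in H(\infty,\infty,\gamma)$, the estimate $|f(w)|\le \|f\|_{(\infty,\infty,\gamma)}(1-|w|)^{-\gamma}$ and the lower bound $|1-tz|\ge 1-tr$ for $|z|=r$ yield
$$|\mathcal{C}_{\mu,\beta}f(z)|\le \int_0^1\frac{|f(tz)|}{|1-tz|^\beta}d\mu(t)\le \|f\|_{(\infty,\infty,\gamma)}\int_0^1\frac{d\mu(t)}{(1-tr)^{\beta+\gamma}}.$$
Because $D_{\beta+\gamma-1}F_\mu$ has non-negative Taylor coefficients, the right-hand integral equals $M_\infty(D_{\beta+\gamma-1}F_\mu,r)$ by Lemma \ref{derivfmu}, which finishes this case.

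\textbf{Case $p=2$.} Writing $h:=fK_{\beta-1}=\sum b_n z^n$, Parseval applied to the Hadamard product gives $M_2(\mathcal{C}_{\mu,\beta}f,r)^2=\sum_{n\ge 0}\mu_n^2|b_n|^2 r^{2n}$, while $M_2(D_{\beta+\gamma-1/2}F_\mu,r)^2\approx \sum_{n\ge 0}\mu_n^2(n+1)^{2\beta+2\gamma-1}r^{2n}$. The plan is first to bound the partial sums $\sigma_n:=\sum_{k=0}^n|b_k|^2$ and then to apply Lemma \ref{lemmaAbelSum}. For the partial sums I use $\|fK_{\beta-1}\|_{L^2}\le \|f\|_{L^2}\|K_{\beta-1}\|_{L^\infty}$ on the circle of radius $s$, giving $M_2(h,s)\le \|f\|_{(2,\infty,\gamma)}(1-s)^{-\beta-\gamma}$, hence $\sum_k|b_k|^2 s^{2k}\le \|f\|^2(1-s)^{-2\beta-2\gamma}$. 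Choosing $s=1-1/(n+1)$ yields
$$\sigma_n\lesssim (n+1)^{2\beta+2\gamma}\|f\|_{(2,\infty,\gamma)}^2.$$
Now the sequence $\gamma_n:=\mu_n^2 r^{2n}$ is decreasing (since $\mu_n$ is decreasing and $r<1$), $\sigma_n$ is increasing, and $t_n:=C(n+1)^{2\beta+2\gamma}\|f\|^2$ is increasing with $\sigma_n\le t_n$. Lemma \ref{lemmaAbelSum} then gives
$$\sum_{n\ge 1}\mu_n^2 r^{2n}(\sigma_n-\sigma_{n-1})\le (t_0-\sigma_0)\mu_1^2 r^2+\sum_{n\ge 1}\mu_n^2 r^{2n}(t_n-t_{n-1}).$$
Since $t_n-t_{n-1}\approx (n+1)^{2\beta+2\gamma-1}\|f\|^2$, the main term is $\approx \|f\|^2 M_2(D_{\beta+\gamma-1/2}F_\mu,r)^2$, and the boundary contributions $\mu_0^2\sigma_0$ and $(t_0-\sigma_0)\mu_1^2 r^2$ are dominated by $\|f\|^2\mu_0^2$, which is itself the $n=0$ term of $M_2(D_{\beta+\gamma-1/2}F_\mu,r)^2$.

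\textbf{Main obstacle.} The $p=\infty$ case is immediate, but for $p=2$ one cannot hope for a pointwise inequality $|b_n|^2\lesssim (n+1)^{2\beta+2\gamma-1}\|f\|^2$ in the full range $\beta>0$ (a naive Cauchy--Schwarz fails for $\beta\le 1/2$). The key is the passage to the partial sums $\sigma_n$, whose growth is controlled uniformly in $\beta>0$, and the monotonicity of $(\mu_n^2 r^{2n})$, which allows the lemma to transfer the control on $\sigma_n$ to the weighted sum $\sum \mu_n^2|b_n|^2 r^{2n}$ without any additional logarithmic loss.
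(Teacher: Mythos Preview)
Your proof is correct and follows the same overall template as the paper's: the $p=\infty$ case is handled identically, and for $p=2$ both arguments reduce via Parseval to a weighted coefficient sum, bound the partial sums of the squared coefficients, and then invoke Lemma~\ref{lemmaAbelSum} with the decreasing weight $\mu_n^2 r^{2n}$.

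The one noteworthy difference is in how the partial-sum bound is obtained. The paper uses the factorisation $\mathcal C_{\mu,\beta}f=D_\beta F_\mu*\mathcal C^{\beta-1}f$ from \eqref{eq1}, writes $g=\mathcal C^{\beta-1}f=\sum a_n z^n$, and appeals to Theorem~\ref{cbeta} (boundedness of $\mathcal C^{\beta-1}$ on $H(2,\infty,\gamma)$) to get $\sum_{k\le n}|a_k|^2\lesssim (n+1)^{2\gamma}\|f\|^2$; the lemma is then applied with $s_n=(n+1)^{2\beta}\sum_{k\le n}|a_k|^2$. You instead stay with $h=fK_{\beta-1}=\sum b_n z^n$ and bound $\sigma_n=\sum_{k\le n}|b_k|^2$ directly via the elementary pointwise inequality $M_2(fK_{\beta-1},s)\le M_2(f,s)M_\infty(K_{\beta-1},s)$, applying the lemma with $s_n=\sigma_n$. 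Your route is slightly more self-contained (it does not need Theorem~\ref{cbeta}), while the paper's route isolates the role of the weighted Ces\`aro operator; the underlying mechanism is the same.
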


\begin{proof}  The case $p=\infty$ is quite simple.
Use that
$$|\mathcal C_{\mu,\beta}f(z)|\le \|f\|_{(\infty,\infty,\gamma)} \int_0^1\frac{d\mu(t)}{(1-t|z|)^{\beta+\gamma}}=\|f\|_{(\infty,\infty,\gamma)} M_\infty(D_{\beta+\gamma-1}F_\mu,r).$$

To handle the case $p=2$ we shall use that if $f\in H(2, \infty,\gamma)$, then $g=\mathcal C^{\beta-1}f=\sum_{n=0}^\infty a_n z^n \in H(2,\infty,\gamma)$.

Since $\sum_{k=0}^{n} |a_k|^2\lesssim \|g\|^2_{(2,\infty,\gamma)}(n+1)^{2\gamma}$ for each  $g\in H(2,\infty,\gamma)$, using Lemma \ref{lemmaAbelSum} with  $\gamma_n= \mu_n^2 r^{2n}$, $s_n=(n+1)^{2\beta}\sum_{k=0}^n |a_k|^2$, $t_n=Cn^{2\gamma+2\beta}$ and the fact that $(n+1)^{2\beta}|a_n|^2 \leq s_n-s_{n-1} $ we obtain
\ba
M^2_2(\mathcal C_{\mu,\beta} f, r)&=& M^2_2(D_\beta F_\mu* \mathcal C^{\beta-1}f,  r)\\
&\lesssim & \sum_{n=0}^{\infty} (n+1)^{2\beta}\mu_n^2 |a_n|^2r^{2n} \\
&\lesssim &  \|f\|^2_{(2,\infty,\gamma)}   \sum_{n=0}^{\infty} (n+1)^{2\beta+2\gamma-1}\mu_n^2 r^{2n} \\
&\lesssim & \|f\|^2_{(2,\infty,\gamma)} M_2^2( D_{\beta+\gamma-1/2} F_\mu,r).
\ea
\end{proof}

The above proposition is stronger than the boundedness from $H(p,\infty,\gamma)$ into $H(p,q,\gamma)$ because it provides control in terms of the integral means. From this result, we directly obtain the following corollary.

\begin{cor} \label{corfinal} Let $\mu$ be a positive Borel measure defined on $[0,1)$, $q,\beta, \gamma>0$ and $p\in\{2,\infty\}$. Then $\mathcal C_{\mu,\beta}$ maps $H(p,\infty,\gamma)$ into $H(p,q,\gamma)$ if and only if $D_{\beta+\gamma-1/p'}F_\mu\in H(p,q,\gamma)$.
\end{cor}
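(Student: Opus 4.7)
The plan is to derive this corollary as a short consequence of Proposition \ref{mainprop} combined with a test-function argument in the style of Proposition \ref{p1}. The heavy analytic lifting — controlling $M_p(\mathcal{C}_{\mu,\beta}f,r)$ pointwise in $r$ by $M_p(D_{\beta+\gamma-1/p'}F_\mu,r)$ — has already been carried out in Proposition \ref{mainprop} for the two specific values $p=2,\infty$, so only two short deductions remain.

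For sufficiency, suppose $D_{\beta+\gamma-1/p'}F_\mu \in H(p,q,\gamma)$ and let $f\in H(p,\infty,\gamma)$. Proposition \ref{mainprop} gives, for every $0<r<1$,
\[
M_p(\mathcal{C}_{\mu,\beta}f,r)\lesssim \|f\|_{(p,\infty,\gamma)}\, M_p(D_{\beta+\gamma-1/p'}F_\mu,r).
\]
Multiplying both sides by $(1-r)^{\gamma}$, raising to the $q$-th power, and integrating against $\frac{dr}{1-r}$ yields
\[
\|\mathcal{C}_{\mu,\beta}f\|_{(p,q,\gamma)}\lesssim \|f\|_{(p,\infty,\gamma)}\,\|D_{\beta+\gamma-1/p'}F_\mu\|_{(p,q,\gamma)},
\]
which is exactly the required boundedness.

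For necessity, I would evaluate the operator on the test function $f_0=K_{\gamma-1/p'}$. By Lemma \ref{fact} we have $K_{\gamma-1/p'}\in H(p,\infty,\gamma)$ precisely because $\gamma+1/p\le \gamma+1/p$ (the borderline case allowed for $q=\infty$). Using the factorization $\mathcal{C}_{\mu,\beta}f=F_\mu*fK_{\beta-1}$ together with the trivial identity $K_{\gamma-1/p'}\cdot K_{\beta-1}=K_{\gamma+\beta-1/p'}$ (both equal to $(1-z)^{-(\gamma+\beta+1/p-1)\,\cdots}$, i.e.\ matching exponents), a one-line computation gives
\[
\mathcal{C}_{\mu,\beta}(K_{\gamma-1/p'})=F_\mu*K_{\gamma+\beta-1/p'}=D_{\gamma+\beta-1/p'}F_\mu.
\]
Thus if $\mathcal{C}_{\mu,\beta}$ maps $H(p,\infty,\gamma)$ into $H(p,q,\gamma)$, then automatically $D_{\gamma+\beta-1/p'}F_\mu\in H(p,q,\gamma)$.

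I do not anticipate a real obstacle here: the only slightly delicate point is to verify that the test kernel $K_{\gamma-1/p'}$ really does sit in $H(p,\infty,\gamma)$ at the endpoint (which is why the $q=\infty$ endpoint version of Lemma \ref{fact} is needed), and to double-check that the exponent arithmetic matches — in particular that $\gamma+1/p+\beta-1=\gamma+\beta-1/p'$. Everything else is immediate from previously established results.
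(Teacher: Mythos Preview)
Your proposal is correct and matches the paper's intended argument exactly: the sufficiency is an immediate integration of the pointwise estimate in Proposition \ref{mainprop}, and the necessity is the test-function computation $\mathcal C_{\mu,\beta}(K_{\gamma-1/p'})=D_{\gamma+\beta-1/p'}F_\mu$ already used in Proposition \ref{p1} (whose forward direction does not require $\beta>1/p'$). The exponent check $K_{\gamma-1/p'}\cdot K_{\beta-1}=K_{\gamma+\beta-1/p'}$ and the endpoint membership $K_{\gamma-1/p'}\in H(p,\infty,\gamma)$ via Lemma \ref{fact} are both fine.
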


\begin{thm} \label{teofinal} Let $\mu$ be a positive Borel measure defined on $[0,1)$, $\gamma>0$,  $1\le p\le \infty$ and $0<q<\infty$. Then $\mathcal C_{\mu,\beta}$ maps $H(p,\infty,\gamma)$ into $H(p,q,\gamma)$ if and only if $D_{\beta+\gamma-1/p'}F_\mu\in H(p,q,\gamma)$.
\end{thm}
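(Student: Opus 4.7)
The plan for necessity is to evaluate $\mathcal{C}_{\mu,\beta}$ on the test function $K_{\gamma-1/p'}$. By Lemma \ref{fact} this kernel lies in $H(p,\infty,\gamma)$, and using the multiplicative identity $K_{\gamma-1/p'}K_{\beta-1}=K_{\beta+\gamma-1/p'}$ together with Lemma \ref{derivfmu}, a direct computation gives
\[
\mathcal{C}_{\mu,\beta}(K_{\gamma-1/p'})=F_\mu*\bigl(K_{\gamma-1/p'}K_{\beta-1}\bigr)=F_\mu*K_{\beta+\gamma-1/p'}=D_{\beta+\gamma-1/p'}F_\mu,
\]
so the assumed boundedness forces $D_{\beta+\gamma-1/p'}F_\mu\in H(p,q,\gamma)$.

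For sufficiency the plan is to peel off the regimes that have already been settled and then treat the remainder directly. Proposition \ref{p1} provides the conclusion whenever $\beta>1/p'$; in particular the case $p=1$ is automatic since $1/p'=0$. Corollary \ref{corfinal}, which rests on the pointwise majorization of Proposition \ref{mainprop}, covers $p\in\{2,\infty\}$ for every $\beta>0$. The genuinely new range to be addressed is therefore $1<p<\infty$ with $p\ne 2$ and $0<\beta\le 1/p'$.

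In this remaining range the plan is to imitate the Hadamard-convolution scheme of Proposition \ref{p1}. Starting from $\mathcal{C}_{\mu,\beta}f=F_\mu*fK_{\beta-1}$ and applying $D_{\beta+\gamma-1/p'}$ one obtains
\[
D_{\beta+\gamma-1/p'}(\mathcal{C}_{\mu,\beta}f)=D_{\beta+\gamma-1/p'}F_\mu*fK_{\beta-1},
\]
so by Lemma \ref{derivMNS} it is enough to place this product in $H(p,q,2\gamma+\beta-1/p')$. Combining the hypothesis with Lemma \ref{lemacon} at the endpoint $(p_2,q_2)=(1,\infty)$ reduces the problem to proving the inclusion $fK_{\beta-1}\in H(1,\infty,\beta+\gamma-1/p')$ for every $f\in H(p,\infty,\gamma)$. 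This last inclusion is what I expect to be the main obstacle: when $\beta\le 1/p'$ the weight $\beta-1/p'$ is non-positive and Lemma \ref{fact} no longer supplies $K_{\beta-1}\in H(p',\infty,\beta-1/p')$, so the clean factorization via Lemma \ref{lemaprod} used in Proposition \ref{p1} breaks down. The plan to overcome this is to replace the lost factorization by a sharper estimate that uses $K_{\beta-1}\in H^{p'}$ when $\beta<1/p'$ (again by Lemma \ref{fact}) together with the Hardy--Littlewood embeddings recalled in Section 2 and, if needed, a dyadic splitting of $\mu$ in the spirit of Lemma \ref{lema1}; the borderline sub-case $\beta=1/p'$, where a logarithmic factor appears in $M_{p'}(K_{\beta-1},r)$, is the most delicate point and will have to be handled by absorbing that logarithm through the extra $q$-integrability encoded in the hypothesis via the moment characterization of Theorem \ref{equivfmu}.
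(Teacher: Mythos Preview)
Your necessity argument is correct and matches the paper. Your identification of the already-settled ranges (Proposition \ref{p1} for $\beta>1/p'$, Corollary \ref{corfinal} for $p\in\{2,\infty\}$) is also correct.

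The gap is in the remaining range $\beta<1/p'$. Your plan reduces, via Lemma \ref{lemacon}, to proving that $fK_{\beta-1}\in H(1,\infty,\beta+\gamma-1/p')$ for every $f\in H(p,\infty,\gamma)$. This inclusion is \emph{false}. Take the lacunary series $f(z)=\sum_{k\ge 0}2^{k\gamma}z^{2^k}$, which lies in $H(p,\infty,\gamma)$ since all $L^p$ means of a lacunary series are comparable to $M_2(f,r)\approx(1-r)^{-\gamma}$. For the same reason, $|f(re^{i\theta})|\gtrsim(1-r)^{-\gamma}$ on a set of $\theta$ of measure bounded below, while $|K_{\beta-1}(re^{i\theta})|\ge 2^{-\beta}$ everywhere; hence $M_1(fK_{\beta-1},r)\gtrsim(1-r)^{-\gamma}$ and $(1-r)^{\beta+\gamma-1/p'}M_1(fK_{\beta-1},r)\gtrsim(1-r)^{\beta-1/p'}\to\infty$. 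Your proposed remedies cannot rescue this: the ``dyadic splitting of $\mu$'' is irrelevant to an inclusion that does not involve $\mu$, and $K_{\beta-1}\in H^{p'}$ together with Hardy--Littlewood only yields $fK_{\beta-1}\in H(1,\infty,\gamma)$, which is strictly weaker than what you need.

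The paper drops the Hadamard scheme entirely in this range. It uses the crude bound $M_\infty(K_{\beta-1},rt)=(1-rt)^{-\beta}$ inside Minkowski's inequality to obtain
\[
M_p(\mathcal C_{\mu,\beta}f,r)\lesssim\|f\|_{(p,\infty,\gamma)}\int_0^1\frac{d\mu(t)}{(1-rt)^{\beta+\gamma}}\approx\|f\|_{(p,\infty,\gamma)}\sum_{n\ge 0}(n+1)^{\beta+\gamma-1}\mu_n r^n,
\]
then converts the hypothesis $D_{\beta+\gamma-1/p'}F_\mu\in H(p,q,\gamma)$ via Theorem \ref{equivfmu} into the moment condition $((n+1)^{\beta-1/q}\mu_n)\in\ell^q$, and checks by a dyadic sum that this forces the right-hand side above to lie in $L^q((1-r)^{\gamma q-1}dr)$. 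The borderline $\beta=1/p'$ is handled by embedding $H(p,q,\gamma)\subset H(p_1,q,\gamma+1/p-1/p_1)$ for some $p_1>p$ and reducing to the case just treated. So the missing idea is to exploit the moment description of the hypothesis directly rather than trying to force a convolution factorization that the geometry of $K_{\beta-1}$ does not support.
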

\begin{proof}  The direct implication is contained in Proposition \ref{p1} since the restriction $\beta>1/p'$ was used only for the converse.

Assume now that $D_{\beta+\gamma-1/p'}F_\mu\in H(p,q,\gamma)$.
The cases  $p\in \{2,\infty\}$ and $p'>1/\beta$ follow from Proposition \ref{p1} and Corollary \ref{corfinal}.

For each $f\in H(p,\infty,\gamma)$ we shall use the estimate
$$
M_p(\mathcal C_{\mu, \beta}f,r)\lesssim \|f\|_{(p,\infty,\gamma)}\int_0^1\frac{d\mu(t)}{(1-rt)^{\beta+\gamma}}\approx \|f\|_{(p,\infty,\gamma)}\sum_{n=0}^\infty (n+1)^{\gamma+\beta-1}\mu_nr^n.
$$
We begin by considering the case $\beta<1/p'$.  Using  Theorem \ref{equivfmu} for $\alpha=\beta+\gamma-1/p'$, since $-1<\alpha<\gamma$ we have that $((n+1)^{\beta-1/q}\mu_n)\in \ell^q.$ 

Therefore, using \cite[Lemma 2.1]{B4},
\begin{align*}
\|\mathcal C_{\mu, \beta}f\|_{(p,q,\gamma)}^q&\lesssim \|f\|^q_{(p,\infty,\gamma)}\int_0^1(1-r)^{q\gamma-1}(\sum_{n=0}^\infty (n+1)^{\gamma+\beta-1}\mu_nr^n)^qdr\\
&\lesssim \|f\|_{(p,\infty,\gamma)}^q \sum_{n=0}^{\infty} 2^{-n\gamma q}(\sum_{k\in I_n} (k+1)^{\gamma+\beta-1}\mu_k)^q\\
&\lesssim \|f\|_{(p,\infty,\gamma)}^q\sum_{n=1}^\infty 2^{-n\gamma q}\mu_{2^{n-1}}^q(\sum_{k\in I_n} (k+1)^{\gamma+\beta-1})^q\\
&\lesssim \|f\|_{(p,\infty,\gamma)}^q\sum_{n=0}^\infty 2^{n\beta q}\mu_{2^{n}}^q\\
&\lesssim \|f\|_{(p,\infty,\gamma)}^q.
\end{align*}
Finally, assume that  $\beta=\frac{1}{p'}$, by the inclusions between the mixed norm spaces \eqref{inclu}, if $p_1>p$ we have that
\[
D_{\beta+\gamma-1/p'}F_\mu=D_{\gamma}F_\mu\in H(p,q,\gamma)\subset H(p_1,q,\gamma+\frac{1}{p}-\frac{1}{p_1}).
\]
Hence, using Lemma \ref{derivMNS}, $F_\mu\in H(p_1,q,\frac{1}{p}-\frac{1}{p_1})$. Therefore, by  Theorem \ref{equivfmu}, we get that $((n+1)^{1/p_1-1/p+1/p_1'-1/q}\mu_n)\in \ell^q$, which is equivalent to $((n+1)^{1/p'-1/q}\mu_n)\in \ell^q$ and arguing as above we obtain the result.

\end{proof}

\begin{cor}\label{corofinal2} Let $\mu$ be a positive Borel measure defined on $[0,1)$, $\gamma>0$,  $2\le p\le \infty$ and $0<q<\infty$. Then $\mathcal C_{\mu}$ maps $H(p,\infty,\gamma)$ into $H(p,q,\gamma)$ if and only if $(\mu_n (n+1)^{1-1/q})\in \ell^q$.
    
\end{cor}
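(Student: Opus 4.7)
The plan is to reduce the claim to Theorem \ref{teofinal} specialized to $\beta=1$ and then characterize the resulting mixed norm membership via Theorem \ref{equivfmu}. Since $1+\gamma-1/p'=\gamma+1/p$, Theorem \ref{teofinal} says that $\mathcal C_\mu:H(p,\infty,\gamma)\to H(p,q,\gamma)$ is bounded if and only if $D_{\gamma+1/p}F_\mu\in H(p,q,\gamma)$, so the task reduces to showing that this membership is equivalent to $((n+1)^{1-1/q}\mu_n)\in\ell^q$.

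For $p=\infty$ this is immediate: one applies Theorem \ref{equivfmu}(ii) with $\alpha=\gamma$, obtaining precisely the required $\ell^q$ condition. For $2\le p<\infty$, the difficulty is that $\alpha=\gamma+1/p>\gamma$ falls outside the range in which Theorem \ref{equivfmu}(iii) applies directly, so I would prove the two implications by different means. For the forward direction, the embedding $H(p,q,\gamma)\subset H(\infty,q,\gamma+1/p)$ from \eqref{inclu} yields $D_{\gamma+1/p}F_\mu\in H(\infty,q,\gamma+1/p)$, whereupon Theorem \ref{equivfmu}(ii) applied with $\alpha=\gamma+1/p$ and target exponent $\gamma+1/p$ produces $((n+1)^{1-1/q}\mu_n)\in\ell^q$.

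For the converse I would use the integral representation $D_{\gamma+1/p}F_\mu(z)=\int_0^1(1-tz)^{-(\gamma+1/p+1)}\,d\mu(t)$ from Lemma \ref{derivfmu}. Minkowski's integral inequality together with the estimate $M_p(K_{\gamma+1/p},r)\approx (1-r)^{-(\gamma+1)}$ coming from \eqref{e3} gives $M_p(D_{\gamma+1/p}F_\mu,r)\lesssim \sum_{n=0}^\infty (n+1)^{\gamma}\mu_n r^n$. Integrating against $(1-r)^{\gamma q-1}\,dr$ and applying the Kellogg-type identity \eqref{lpqequiv} bounds $\|D_{\gamma+1/p}F_\mu\|_{(p,q,\gamma)}^q$ by $\|(\mu_n)\|_{(1,q)}^q$, and monotonicity of $(\mu_n)$ produces $\|(\mu_n)\|_{(1,q)}^q\approx\sum_n 2^{nq}\mu_{2^n}^q\approx\sum_n (n+1)^{q-1}\mu_n^q$, which is exactly the hypothesis.

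The main obstacle is the parameter mismatch $\alpha=\gamma+1/p>\gamma$ for $2\le p<\infty$, which places the problem just outside the hypotheses of Theorem \ref{equivfmu}(iii). This forces the asymmetric treatment above: a lossy one-sided embedding for the forward direction, combined with a Minkowski-based pointwise estimate for the converse that turns out to be sharp precisely because $\mu_n$ is monotone, allowing the Kellogg-space norm to collapse to the desired weighted $\ell^q$ sum.
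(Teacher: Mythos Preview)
Your proof is correct, and the reduction via Theorem \ref{teofinal} together with the forward direction (embedding $H(p,q,\gamma)\subset H(\infty,q,\gamma+1/p)$ from \eqref{inclu} followed by Theorem \ref{equivfmu}(ii)) coincides exactly with the paper's argument. For the converse the two proofs diverge: the paper exploits the Hardy--Littlewood coefficient inequality $M_p^p(g,r)\lesssim\sum_n|\hat g(n)|^p(n+1)^{p-2}r^{np}$, valid only for $p\ge 2$, to reduce to the Kellogg norm $\|(\mu_n^p(n+1)^{p-1})\|_{(1,q/p)}$; you instead use the integral representation and Minkowski to get $M_p(D_{\gamma+1/p}F_\mu,r)\lesssim\int_0^1(1-rt)^{-(\gamma+1)}d\mu(t)$, and then collapse $\|(\mu_n)\|_{(1,q)}$ by monotonicity. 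Your route is more elementary and, notably, never invokes the hypothesis $p\ge 2$: both implications go through verbatim for any $1\le p\le\infty$, so your argument in fact establishes the corollary in the full range. The restriction $p\ge 2$ in the statement appears to be an artifact of the Hardy--Littlewood method the paper chose.
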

\begin{proof} From Theorem \ref{teofinal} we know that  $\mathcal C_{\mu}:H(p,\infty,\gamma) \to H(p,q,\gamma)$ is bounded if and only if
$D_{\gamma+1/p}F_\mu \in H(p,q,\gamma)$. Let us see that this is equivalent to the fact that $(\mu_n (n+1)^{1-1/q})\in \ell^q$ for $p\ge 2$.

On the one hand, $D_{\gamma+1/p}F_\mu \in H(p,q,\gamma)\subset H(\infty,q,\gamma+1/p)$ and  from (ii) in Theorem \ref{equivfmu}
we have $(\mu_n (n+1)^{1-1/q})\in \ell^q$.

On the other hand, since $p\ge 2$, 
$$M_p(D_{\gamma+1/p}F_\mu,r)\lesssim (\sum_{n=0}^\infty \frac{\mu_n^p (n+1)^{\gamma p+1} r^{np}}{(n+1)^{2-p}})^{1/p}. $$
Therefore, arguing as in Theorem \ref{equivfmu}
\begin{align*}
    \|D_{\gamma+1/p}F_\mu\|^q_{(p,q,\gamma)}&\lesssim \int_0^1(1-r)^{\gamma q-1}
    (\sum_{n=0}^\infty \mu_n^p (n+1)^{(\gamma+1) p-1} r^{np})^{q/p}dr\\
    &\approx \|(\mu_n^p (n+1)^{ p-1})\|_{(1,q/p)}^{q/p},
\end{align*}
and using that $(\mu_n^p (n+1)^{ p-1})\in \ell(1,q/p)$ is equivalent to
$(\mu_n (n+1)^{ 1-1/q})\in \ell^q$ we get the result.
\end{proof}


\thebibliography{Ooo}

\bibitem{A} K.~F. Andersen, Ces\`aro averaging operators on Hardy spaces, \textit{Proc. Roy. Soc. Edinburgh Sect.} A {\bf 126} (1996), no.~3, 617--624.



\bibitem{BSW} G. Bao, F. Sun, H. Wulan, Carleson measures and the range of a Cesàro-like operator acting on $H^{\infty}$, \textit{Anal. Math. Phys.} 12 (2022) 142.

\bibitem{BGSW} G. Bao, K. Guo, F. Sun, H. Wulan, 
 \textit{ Hankel matrices acting on the Dirichlet space}
 J. Fourier Anal. Appl. {\bf 30} (2024) Paper No. 53, 26 pp.



\bibitem{B4}
O. Blasco, Multipliers on spaces of analytic functions, \textit{Canad. J. Math.} {\bf 47} (1995), no.~1, 44--644.


\bibitem{B24} O. Blasco, Ces\`aro-type operators on Hardy spaces, \textit{J. Math. Anal. Appl.} {\bf 529} (2024), no.~2, Paper No. 127017, 26 pp

\bibitem{B24bis} O. Blasco, Generalized Ces\`aro operators on weighted Dirichlet spaces, \textit{J. Math. Anal. Appl.} {\bf 540} (2024), Paper No. 128627, 21 pp



\bibitem{BJ} O. Blasco and H. Jarchow, A note on Carleson measures for Hardy spaces, \textit{Acta Sci. Math.} (Szeged) {\bf 71} (2005), no.~1-2, 371--389.

\bibitem{CGP} C. Chatzifountas, D. Girela~\'Alvarez and J.~\'A. Pel\'aez, A generalized Hilbert matrix acting on Hardy spaces, \textit{J. Math. Anal. Appl.} {\bf 413} (2014), no.~1, 154--168.

\bibitem{D} P. L. Duren, \textit{Theory of $H^p$-spaces}, Pure and Applied Mathematics, Vol. \textbf{38} (1970). Academic Press, New York-London.



\bibitem{GGMM} P. Galanopoulos et al., Operators induced by radial measures acting on the Dirichlet space, \textit{Results Math.} {\bf 78} (2023), no.~3, Paper No. 106, 24 pp.

\bibitem{GGM} P. Galanopoulos, D. Girela and N. Merch\'an, Ces\`aro-like operators acting on spaces of analytic functions, \textit{Anal. Math. Phys.} {\bf 12} (2022), no.~2, Paper No. 51, 29 pp.

\bibitem{GGM2} P. Galanopoulos, D. Girela and N. Merch\'an, Ces\`aro-type operators associated with Borel measures on the unit disc acting on some Hilbert spaces of analytic functions, \textit{J. Math. Anal. Appl.} {\bf 526} (2023), no.~2, Paper No. 127287, 13 pp.

\bibitem{GSZ} P. Galanopoulos, A.~G. Siskakis and R. Zhao, Weighted Ces\`aro type operators between weighted Bergman spaces, \textit{Bull. Sci. Math.} {\bf 202} (2025), Paper No. 103622, 17 pp.

\bibitem{GTZ} Y.~T. Guo, P.~C. Tang and X.~J. Zhang, Ces\`aro-like operators between the Bloch space and Bergman spaces, \textit{Ann. Funct. Anal.} {\bf 15} (2024), no.~1, Paper No. 8, 16 pp.

\bibitem{H} G.H. Hardy, Notes on some points in the integral calculus LXVI: the arithmetic mean of a Fourier constant, \textit{Messenger Math.} 58 (1929) 50–52.

\bibitem{HL} G.H. Hardy, J.E. Littlewood, Some new properties of Fourier constants, \textit{J. Lond. Math. Soc.} 6 (1931) 3–9.

\bibitem{HKZ} H. Hedenmalm, B. Korenblum, and K. Zhu \textit{Theory of Bergman Spaces} volume
199 of Graduate Texts in Mathematics (2000). Springer-Verlag, New York.

\bibitem{JT} J.~J. Jin and S.~A. Tang, Generalized Ces\`aro operators on Dirichlet-type spaces, \textit{Acta Math. Sci.} Ser. B (Engl. Ed.) {\bf 42} (2022), no.~1, 212--220.

\bibitem{Ke} C.~N. Kellogg, An extension of the Hausdorff-Young theorem, \textit{Michigan Math. J.} {\bf 18} (1971), 121--127.

\bibitem{LX} Q. Lin and H. Xie, 
Ces\`aro-type operators on derivative-type Hilbert spaces of anlytic functions: the proof of a conjecture, \textit{J. 
Funct. Anal.} {\bf 288} (2025), Paper No. 1110813, 22 pp.



\bibitem{M} J. Miao, The Ces\`aro operator is bounded on $H^p$ for $0<p<1$, \textit{Proc. Amer. Math. Soc.} {\bf 116} (1992), no.~4, 1077--1079.

\bibitem{P} M. Pavlovic, Analytic functions with decreasing coefficients and Hardy and Bloch spaces, \textit{Proc. Edinb. Math. Soc.} 56 (2013) 623–625.

\bibitem{R1} H. Rhaly, Terraced matrices, \textit{Bull. London  Math. Soc.} 21 (1989) 399–406.

\bibitem{R2} H. Rhaly, $p$-Ces\`aro matrices, \textit{ Houston J. Math.} 15 (1989) 137-146.

\bibitem{S1}  A.~G. Siskakis, Composition semigroups and the Ces\`aro operator on $H^p$, \textit{J. London Math. Soc.} (2) {\bf 36} (1987), no.~1, 153--164.

\bibitem{S2} A.~G. Siskakis, On the Bergman space norm of the Ces\`aro operator, \textit{Arch. Math.} (Basel) {\bf 67} (1996), no.~4, 312--318.

\bibitem{S3} A.~G. Siskakis, The Ces\`aro operator is bounded on $H^1$, \textit{Proc. Amer. Math. Soc.} {\bf 110} (1990), no.~2, 461--462.

\bibitem{St} K. Stempak, Ces\`aro averaging operators, \textit{Proc. Roy. Soc. Edinburgh Sect. A} {\bf 124} (1994), no.~1, 121--126.


\bibitem{ZZ} R. Zhao and K. Zhu, \textit{Theory of Bergman spaces in the unit ball of $\mathbb{C}^n$}, Mém. Soc. Math.
Fr. (N.S.) (2008), no. 115, vi+103 pp.

\end{document}